\title[Discrete Kuramoto model]{Emergent behaviors of the discrete-time Kuramoto model for generic initial configuration}
\author[Zhang]{Xiongtao Zhang}
\address[Xiongtao Zhang]{\newline Center for Mathematical sciences, \newline Huazhong University of Science and Technology, Wuhan, China}
\email{xtzhang@hust.edu.cn}
\author[Zhu]{Tingting Zhu}
\address[Tingting Zhu]{\newline School of Mathematics and Statistics, \newline Huazhong University of Science and Technology, Wuhan, China }
\email{ttzhu@hust.edu.cn}
\newtheorem{theorem}{Theorem}[section]
\newtheorem{lemma}{Lemma}[section]
\newtheorem{remark}{Remark}[section]
\newcommand{\bbr}{\mathbb R}
\newcommand{\bbz}{\mathbb Z}
\def\charf {\mbox{{\text 1}\kern-.30em {\text l}}}
\begin{document}

\date{\today}

\subjclass{39A10, 39A12, 34D05, 68M10.} 
\keywords{Discrete-time gradient flow, Kuramoto model, discrete-time dynamics, generic initial data, uniform convergence}

\thanks{The work of X. Zhang is supported by the National Natural Science Foundation of China (Grant No. 11801194).}


\begin{abstract}
 In this paper, we will study the emergent dynamics of the discrete Kuramoto model for generic initial data. This is an extension of the previous work \cite{H-K-K-Z}, in which the initial configurations are supposed to be within a half circle. More precisely, we will provide the theory of discrete gradient flow which can be applied to general Euler iteration scheme. Therefore, as a direct application, we conclude the emergence of synchronization of discrete Kuramoto model. Moreover, we obtain for small mesh size that, the synchronization will occur exponentially fast for initial data in $\mathcal{A}_1$ (see definition in \eqref{D1}).  
\end{abstract}
\maketitle \centerline{\date}

\section{Introduction}\label{sec:1}
\vspace{0.5cm}	
	Collective dynamics of complex systems exist all around the world, in which self-propelled individuals organize themselves into a particular motion through simple rules. For instance, the aggregation of bacteria, flocking of birds, swarming of fish, and even the motion of galaxy can be considered as various types of complex systems \cite{B-H, C-F-T-V, C-K-M-T, D-M2, H-K-P-Z2, J-K, P-L-S-G-P, P-R-K, T-T, T-B, V-C-B-C-S, Wi}. To model such collective dynamics, several phenomenological models were proposed and have been studied analytically and numerically \cite{B-C-C, C-C-R, C-D-P, C-H, C-H-L, C-M,  C-S2, D-M1, D-M2, D-M3, D-Y, D-F-T, F-H-J, H-L-R-S, H-Liu, H-K-P-Z3, H-P-Z, H-T, LH, LX, S}. Recently, due to the relation with engineering applications such as control of robots, sensor networks and formation of unmanned aerial vehicle etc., the collective behaviors in complex systems has been extensively studied \cite{L-P-L-S, M-T2, M-T1, P-L-S-G-P, P-E-G}. In this paper, we will consider the well-known Kuramoto model describing the motion of oscillators on the unit circle $S^1$. The dynamics of Kuramoto model are given by the following ordinary differential equations:
\begin{equation}\label{A-1}
\dot{\theta}_i=\Omega_i+\frac{K}{N}\sum_{j=1}^N \sin(\theta_j-\theta_i).
\end{equation}
where $\theta_i \in \bbr$ and $\Omega_i$ are the phase and a natural frequency of $i$-th oscillator, respectively.
The Kuramoto model \eqref{A-1} has been extensively studied in many papers, to name a few, synchronization and stability \cite{A-B-V-R-S, C-H-J-K, C-S, D-X, D-B, H-H-K, H-K-K-Z, H-K-R, H-K-P-Z2, H-L-X, J-M-B, St}, mean-field limit and corresponding neutral stability in kinetic Kuramoto model \cite{B-C-M, H-K-P-Z, S-M, S-M2}, Landau damping around incoherent state and partial phase locked state \cite{D-F-G, F-G-G} and the sensitivity analysis \cite{H-J-J}.\newline 

However, the continuous-time model is ideal one and every time when we perform numerical simulations for the continuous-time model, we can only use discrete-time model. Thus, it is necessary to study the corresponding discrete-time model. To discretize the continuous-time model \eqref{A-1}, we follow \cite{H-K-K-Z} to choose the forward Euler method. More precisely, for fixed time-step size $h := \Delta t > 0$, let $\theta_i^h(n)$ be the phase for the $i$-th agent evaluated at the $n$-th step. Then, for the original Kuramoto model, the dynamics of $\theta_i^h(n)$ is governed by the following discrete system: 
\begin{equation} \label{A-2}
\begin{cases}
\displaystyle \theta_i^h(n+1) = \theta_i^h(n)+ h \Omega_i+\frac{hK}{N}\sum_{j=1}^N\sin(\theta_j^h(n)-\theta_i^h(n)), \quad n = 0, 1, \cdots, \quad 1 \leq i \leq N, \\
\displaystyle \theta_i^h(0) = \theta_i^0, \quad  \sum_{i=1}^{N} \theta_i^0 = 0.
\end{cases}
\end{equation}\newline
The discrete-time model has an advantage that, the time asymptotical behavior of a non-all-to-all and non-symmetric model can be obtained by studying an corresponding all-to-all symmetric system after several steps of iteration, while this kind of analysis cannot be directly applied to the continuous model \cite{C-K-M-T, CD1, LH, LX, S}. Therefore, in order to understand the large time behavior of Kuramoto oscillators, it's very important to study the discrete Kuramoto model under symmetric topology, i.e. \eqref{A-2}. However, there are two main difficulty to deal with the discrete Kuramoto model \eqref{A-2}. First, we cannot follow the analysis on continuous model to construct the differential equation of diameter, since the existence of error along the iteration. Thus we have to carefully estimate the higher order error and yield the time asymptotical behavior. Secondly, since the half circle is an invariant set for large coupling strength and, the authors  in \cite{H-K-K-Z} studied the identical model in half circle and non-identical model in a quarter, respectively. However, the half circle is no more an invariant set  for generic initial data, thus we have to develop some new technique to overcome this kind of difficulty. In \cite{H-K-R}, the authors applied the properties of gradient flow to prove for continuous-time model that, frequency synchronization will occur for generic initial data, provided the coupling strength is sufficiently large. Therefore, it is very natural to ask wether similar results can be rigorously proved for discrete-time model. More precisely, we address the following questions in the present paper:\newline

 \begin{itemize}
\item Question A: (Discrete gradient flow) Since the right hand side of the iteration scheme \eqref{A-2} preserves the gradient structure, is it possible to construct the equilibrium state and the corresponding convergence as  in continuous time model?
\vspace{0.2cm}

\item Question B: (Synchronization) Can we combine the discrete gradient flow structure and higher order estimates on iteration errors to verify the emergence of the synchronization for generic initial data, and can we obtain the asymptotical convergence rate?
\end{itemize}
\vspace{0.5cm}

Our main results in the present paper are three-fold. First, we will estimate the higher order error in the general discrete gradient flow and prove the asymptotical convergence of the discrete gradient flow. Different from the continuous version, we have to construct a convex combination of two adjacent steps to control the error from the iteration, and thus yield the desired result  (see Theorem \ref{T3.1}). Secondly, based on the well approximation between continuous-time model and discrete-time model, we will study the identical discrete-time Kuramoto model and show, for almost all initial data, the exponential convergence to either phase synchronization state or bipolar state. Moreover, we will construct all possible equilibrium state for identical discrete Kuramoto model, which is the same as the continuous model (see Theorem \ref{T4.1}). Third, we will follow the similar idea in \cite{H-K-R} and apply the discrete gradient flow theory to prove the emergence of synchronization for non-identical discrete Kuramoto model (see Theorem \ref{final}).\newline 

The rest of the present paper is organized as follows. In Section \ref{sec:2}, we will review some well known preliminary results such as the asymptotical behavior of the continuous Kuramoto model, the total error estimates of the Euler scheme, the $\L$ojasiewicz inequality and the corresponding convergence result of gradient flow, etc. In Section \ref{sec:3}, we will provide the theory of discrete gradient flow and prove it by higher order of error estimates. In Section \ref{sec:4}, we will construct all the possible equilibrium states of the identical discrete Kuramoto model and show the exponential convergence of almost all initial data to one of the equilibrium states. In Section \ref{sec:5}, based on the results in the previous sections, we will prove the emergence of the synchronization of discrete non-identical Kuramoto model for almost all initial data, provided the coupling strength is suffciently large. Finally, Section \ref{sec:6} will be devoted to a brief summary.\newline

\section{Preliminaries}\label{sec:2}
\setcounter{equation}{0}
\vspace{0.5cm}
In this section, we will review some previous results for the continuous Kuramoto model \eqref{A-1} in both identical and nonidentical case. Then, we will introduce some well known lemmas such as total error for discrete Euler scheme and asymptotical behavior of gradient flow, which will be mainly used in the later sections. 

\subsection{Identical Kuramoto model}
In this part, we will review some previous results for identical Kuramoto model. First, we introduce the definition of order parameter which has been widely used in the study of Kuramoto model. More precisely, for a configuration $\Theta = \Theta(t)=(\theta_1(t), \theta_2(t), \cdots, \theta_N(t))$ governed by \eqref{A-1}, the Kuramoto order parameters are defined by the following relation:
\begin{equation}\label{B1}
 re^{i\phi} := \frac{1}{N} \sum_{k=1}^{N} e^{i\theta_k},\quad \theta_j(0) = \theta_j^0, \quad r_0 := \left| \frac{1}{N} \sum_{j=1}^{N} e^{i\theta_j^0} \right|.
 \end{equation}
For identical oscillators, without loss of generality, we may assume $\Omega_i = 0$ for all $i$ due to the conservation of the mean natural frequency. Then, we first recall the result in \cite{B-C-M, H-K-R} which provided the possible asymptotic states for identical Kuramoto oscillators.
\begin{lemma}	
\label{L2.1}
\cite{B-C-M}
Let $\Theta = (\theta_1, \ldots,\theta_N)$ be a solution to the identical Kuramoto model \eqref{A-1}
with initial phases $\Theta_0$ satisfying
$\frac{1}{N} \sum_{j=1}^N \theta_j^0 = 0$
and $r_0 > 0$, where $r_0$ is defined in \eqref{B1}. Then, we have
\[ \lim\limits_{t \to \infty} |\theta_j(t) - \phi(t)| = 0\ \  \text{or}\ \  \pi, \quad \text{for all} \ j =1, \dots, N.\]
\end{lemma}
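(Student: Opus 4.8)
The plan is to exploit the gradient-flow structure of the identical Kuramoto model together with the {\L}ojasiewicz machinery recalled in Section \ref{sec:2}. First I would record that \eqref{A-1} with $\Omega_i\equiv 0$ is the negative gradient flow of the analytic potential
\[
V(\Theta):=\frac{K}{2N}\sum_{j,k=1}^{N}\bigl(1-\cos(\theta_j-\theta_k)\bigr)=\frac{KN}{2}\bigl(1-r^2\bigr),
\]
i.e. $\dot\theta_i=-\partial_{\theta_i}V(\Theta)$, and that in terms of the order parameter \eqref{B1} the system rewrites as $\dot\theta_i=Kr\sin(\phi-\theta_i)$. Since $\frac{d}{dt}V(\Theta(t))=-\sum_i|\dot\theta_i|^2\le0$ and $K>0$, the modulus $r=r(t)$ is nondecreasing, so $r(t)\ge r_0>0$ for all $t\ge 0$; consequently $e^{i\phi(t)}=\bigl(\tfrac1N\sum_k e^{i\theta_k(t)}\bigr)/r(t)$, hence each $\theta_j-\phi$, is well defined and continuous along the flow, and $r(t)\to r_\infty\in(0,1]$.

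Next I would upgrade the monotonicity of $V$ to convergence of the configuration. Since $V$ depends only on the differences $\theta_j-\theta_k$, it descends to a real-analytic function on the torus $\bbt^{N}$, and the reduced trajectory $\Theta(t)\ \mathrm{mod}\ 2\pi$ is the negative gradient flow of this analytic potential on the compact manifold $\bbt^{N}$, hence stays in a precompact set. Invoking the {\L}ojasiewicz gradient inequality and the associated convergence theorem for gradient flows recalled in Section \ref{sec:2}, I would conclude that the $\omega$-limit set is a single equilibrium point $\Theta^\infty$, so $\theta_i(t)\to\theta_i^\infty\ (\mathrm{mod}\ 2\pi)$ for each $i$; since $r$ is continuous and $r_\infty\ge r_0>0$, this also gives $e^{i\phi(t)}\to e^{i\phi^\infty}$, hence $\theta_j(t)-\phi(t)\to\theta_j^\infty-\phi^\infty\ (\mathrm{mod}\ 2\pi)$.

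It then remains to read off the equilibrium relation: at $\Theta^\infty$ one has $0=\dot\theta_j=Kr_\infty\sin(\phi^\infty-\theta_j^\infty)$ for every $j$, and since $K>0$ and $r_\infty>0$ this forces $\sin(\phi^\infty-\theta_j^\infty)=0$, i.e. $\theta_j^\infty-\phi^\infty\in\{0,\pi\}\pmod{2\pi}$, which is exactly the claim.

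I expect the main obstacle to be the careful application of the {\L}ojasiewicz convergence theorem: passing to $\bbt^{N}$ is convenient precisely because there the trajectory is automatically precompact and $V$ is genuinely analytic, and because the rotational symmetry $\Theta\mapsto\Theta+c\mathbf1$ (which makes equilibria occur in $S^1$-orbits) is harmless, the quantity $\theta_j-\phi$ being invariant under it. If one prefers to avoid {\L}ojasiewicz, the same conclusion can be reached by LaSalle's invariance principle on $\bbt^{N}$: the $\omega$-limit set is nonempty, compact, connected and invariant, $V$ is constant on it so $\dot V\equiv 0$ and thus $\nabla V\equiv0$ there, i.e. it consists of equilibria; on it $r\ge r_0>0$, so $e^{i\phi}$ is continuous and $e^{i(\theta_j-\phi)}=\pm1$ by the computation above, whence connectedness forces $e^{i(\theta_j(t)-\phi(t))}$ to converge to a single value in $\{\pm1\}$. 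In either route the hypothesis $r_0>0$ is used twice: to give $\phi$ a meaning, and — through $r_\infty>0$ — to pass from $\dot\theta_j^\infty=0$ to the dichotomy $\theta_j^\infty-\phi^\infty\in\{0,\pi\}$.
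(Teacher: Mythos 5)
The paper does not prove Lemma \ref{L2.1}; it is imported verbatim from \cite{B-C-M} (whose proof works directly on the order-parameter dynamics and, as recalled in Remark \ref{R2.1}, also identifies the limits $k_j\pi+\phi^*$), so there is no internal proof to compare against. Your gradient-flow argument is correct and self-contained, and it is in the spirit of the machinery the paper itself recalls (Lemmas \ref{L-inequality}--\ref{c-gradient}) and discretizes in Theorem \ref{T3.1}: the potential identity $V=\tfrac{KN}{2}(1-r^2)$, the monotonicity $r(t)\ge r_0>0$ (which legitimizes $\phi$), convergence of the configuration, and the equilibrium relation $Kr_\infty\sin(\phi^\infty-\theta_j^\infty)=0$ yielding the $0/\pi$ dichotomy are all sound. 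Two remarks on the details. First, Lemma \ref{c-gradient} as stated in Section \ref{sec:2} requires a uniformly bounded trajectory in $\bbr^n$, and boundedness of $\Theta(t)$ in $\bbr^N$ is not obvious here (the potential is periodic and non-coercive; $\dot\Theta\in L^2$ alone does not exclude drift), so passing to $\bbt^N$ is indeed necessary --- but then you are invoking the {\L}ojasiewicz--Simon theorem on a compact analytic manifold rather than the lemma literally quoted; your LaSalle variant sidesteps this cleanly (the $\omega$-limit set is compact, connected, invariant, consists of critical points of $V$, lies in $\{r=r_\infty\ge r_0>0\}$, and the continuous map $\Theta\mapsto e^{i(\theta_j-\phi)}$ sends it into the discrete set $\{\pm1\}$, hence is constant on it), and is the preferable route given what the paper provides. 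Second, what you prove is convergence of $\theta_j-\phi$ modulo $2\pi$; this is the intended reading of the statement (Remark \ref{R2.1} has $\theta_j\to k_j\pi+\phi^*$, so the real-valued difference tends to a multiple of $\pi$), and if one wants convergence of the real-valued quantity it suffices to add that once $e^{i(\theta_j(t)-\phi(t))}$ remains in a small arc around $\pm1$, continuity confines $\theta_j(t)-\phi(t)$ to a single branch of the preimage, hence it converges in $\bbr$. Note also that the zero-mean hypothesis is not actually used in your argument; it only matters for identifying the limit $\phi^*$ as in Remark \ref{R2.1}.
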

\begin{lemma}
\label{L2.2}
\cite{B-C-M, H-K-R}
Let $\Theta = (\theta_1, \ldots,\theta_N)$ be a solution to the identical continuous Kuramoto model \eqref{A-1}
with natural frequency $\Omega_i$ and initial configuration $\Theta_0$ satisfying
\begin{equation}\label{B2}
 \Omega_i=0,\quad \frac{1}{N} \sum_{j=1}^N \theta_j^0 = 0,\quad  \theta_i^0 \neq \theta_j^0,\quad  i \ne j,\quad r_0 > 0,
 \end{equation}
where $r_0$ is defined in \eqref{B1}. Moreover, we define the synchronization set $\mathcal{I}_s$ and bipolar set $\mathcal{I}_b$ as follows,
\[ \mathcal{I}_s := \{ j: \lim_{t \to \infty} |\theta_j(t) - \phi(t)| = 0 \}, \quad \mathcal{I}_b := \{ j: \lim_{t \to \infty} |\theta_j(t) - \phi(t)| = \pi \}.\]
Then, we have $|\mathcal{I}_b| \leq 1$, where $|A|$ is the cardinality of the set $A$.
\end{lemma}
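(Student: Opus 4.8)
\medskip
\noindent\textit{Proof proposal.} The plan is to argue by contradiction, using only the pointwise statement of Lemma~\ref{L2.1} together with the monotonicity of the order parameter, and in particular without having to know that the whole configuration converges to an equilibrium. Suppose $|\mathcal{I}_b|\ge 2$ and fix two indices $i\ne j$ in $\mathcal{I}_b$. First I would record three facts. (i) With the potential $V(\Theta):=-\frac{K}{2N}\sum_{k,l}\cos(\theta_k-\theta_l)$ one has $V=-\frac{KN}{2}r^2$ by \eqref{B1}, and the identical model \eqref{A-1} is the gradient flow $\dot\Theta=-\nabla V$; hence $V$ is non-increasing along the flow, and since $K>0$ the map $t\mapsto r(t)$ is non-decreasing, so $r(t)\ge r_0>0$ for all $t\ge 0$. (ii) The right-hand side of \eqref{A-1} depends only on the phase differences, so two oscillators issued from distinct data cannot collide: by uniqueness for the ODE, $\theta_i(t)\ne\theta_j(t)$ for all $t$. (iii) By Lemma~\ref{L2.1}, each of $\theta_i(t)-\phi(t)$ and $\theta_j(t)-\phi(t)$ converges to $0$ or to $\pi$ modulo $2\pi$; since $i,j\in\mathcal{I}_b$ both converge to $\pi$, and after shifting one of the two phases by an integer multiple of $2\pi$ if needed (harmless, by (ii)) we may assume $\theta_i(t)-\phi(t)\to\pi$ and $\theta_j(t)-\phi(t)\to\pi$ as genuine real limits.

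Next I would track the scalar $d(t):=\theta_i(t)-\theta_j(t)$, which by (iii) tends to $0$ and by (ii) is continuous and nowhere zero, hence of constant sign; without loss of generality $d(t)>0$ (the case $d<0$ is symmetric), so $d(t)\to 0^+$. From $\dot\theta_k=\frac{K}{N}\sum_l\sin(\theta_l-\theta_k)=Kr\sin(\phi-\theta_k)$ and the sum-to-product identity,
\[
\dot d \;=\; Kr\big[\sin(\phi-\theta_i)-\sin(\phi-\theta_j)\big]\;=\;-2Kr\cos\!\Big(\phi-\tfrac{\theta_i+\theta_j}{2}\Big)\sin\!\Big(\tfrac{d}{2}\Big).
\]
By (iii), $\tfrac{\theta_i+\theta_j}{2}-\phi\to\pi$, so the cosine factor tends to $-1$; I would then fix $T$ so large that for every $t\ge T$ one has $\cos(\phi-\tfrac{\theta_i+\theta_j}{2})<-\tfrac12$, $r(t)\ge r_0$, and $0<d(t)<\tfrac{\pi}{2}$. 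For $t\ge T$ the displayed identity gives $\dot d(t)\ge Kr_0\sin(d(t)/2)>0$, so $d$ is strictly increasing on $[T,\infty)$ and therefore $d(t)\ge d(T)>0$ there, contradicting $d(t)\to 0^+$. Hence $|\mathcal{I}_b|\le 1$.

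The substance, as opposed to the trigonometry, lies in two places. First, the monotonicity $r(t)\ge r_0>0$, which is exactly the gradient-flow structure of the identical model and is what guarantees $\dot d\ge Kr_0\sin(d(t)/2)$ for $t$ large, so that $d$ is eventually strictly increasing. Second, the $2\pi$-branch bookkeeping in (iii) together with the non-collision claim (ii), which is what turns ``$d\to 0$ while $\dot d$ has the wrong sign'' into an honest contradiction. I expect this branch bookkeeping---ensuring that the difference really does converge to $0$ and is nowhere zero for all time---to be the only point requiring genuine care in turning the sketch into a complete proof.
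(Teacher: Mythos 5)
Your argument cannot be compared line-by-line with the paper, because the paper offers no proof of this lemma: it is quoted verbatim from \cite{B-C-M, H-K-R}. On its merits, your route is the standard one underlying the cited result, and the core of it checks out: the identical model is the gradient flow of $V=-\frac{KN}{2}r^2$, so $r(t)\ge r_0>0$; the order-parameter form $\dot\theta_k=Kr\sin(\phi-\theta_k)$ and the sum-to-product identity give $\dot d=-2Kr\cos\big(\phi-\frac{\theta_i+\theta_j}{2}\big)\sin(d/2)$; and once the cosine factor is below $-\frac12$ and $0<d<\frac{\pi}{2}$, the inequality $\dot d\ge Kr_0\sin(d/2)>0$ forces $d(t)\ge d(T)>0$, contradicting $d\to 0^+$. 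Points (i), the non-collision claim (ii) for $\theta_i-\theta_j$ (backward uniqueness, or Gronwall using the $K$-Lipschitz bound $|\dot d|\le K|d|$), and the sign discussion are all fine.

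The genuine gap is exactly the step you flag and then dismiss: the assertion that shifting one phase by $2\pi$ is ``harmless, by (ii)''. Fact (ii) only gives $\theta_i(t)\ne\theta_j(t)$; after the shift you need $\theta_i(t)\ne\theta_j(t)+2\pi$ for all $t$ (so that $d$ is nowhere zero, hence of constant sign), and this follows neither from (ii) nor from the literal hypothesis \eqref{B2}, which only forbids $\theta_i^0=\theta_j^0$ as real numbers. Indeed, under the literal reading the conclusion itself fails on this exceptional set: for $N=5$ the stationary configuration $\Theta_0=(0,\,2\pi,\,-2\pi,\,\pi,\,-\pi)$ satisfies every condition in \eqref{B2} (zero mean, pairwise distinct reals, $r_0=\frac15>0$), yet $\phi\equiv 0$ and $|\theta_4-\phi|\equiv|\theta_5-\phi|\equiv\pi$, so $|\mathcal{I}_b|=2$. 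The hypothesis actually consumed at this step — and present in the cited sources, compare also $\theta_j^0\in[-\pi,\pi)$ in \eqref{multiple} — is distinctness of the initial phases modulo $2\pi$. With that reading your proof closes at once: the same uniqueness/Gronwall argument you invoke for (ii) applies verbatim to $\theta_i-\theta_j-2k\pi$ for each fixed $k\in\mathbb{Z}$ (the vector field is $2\pi$-periodic and Lipschitz), so the shifted difference never vanishes and the rest of your argument is complete. State this explicitly instead of attributing it to (ii); as written, that one sentence is the only unsound link in the chain.
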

\begin{remark}\label{R2.1}
In the proof of Lemma \ref{L2.1} in \cite{B-C-M}, the authors constructed the time asymptotical limits of $\phi(t)$ and $\theta_j(t)$, $(j =1, \ldots, N)$, which are given as follows,
\[\lim_{t \to + \infty} \phi(t) = -\frac{1}{N} \sum_{j=1}^N k_j \pi := \phi^*, \quad \lim_{t \to +\infty} \theta_j(t) = k_j \pi + \phi^*, \quad k_j \in \bbz.\]
Moreover, the condition $\theta_i^0 \neq \theta_j^0$ means the initial data are chosen in the set 
\[\bbr^N\setminus\Big(\cup_{i\neq j}\{\Theta\ :\theta_i=\theta_j\}\cup\{\Theta\ : r_0=0\}\Big).\] 
As the sets $\{\Theta\ :\theta_i=\theta_j\}$ and $\{\Theta\ : r_0=0\}$ are all lower dimensional manifold in $\bbr^N$, we immediately conclude the set $\cup_{i\neq j}\{\Theta\ :\theta_i=\theta_j\}$ and $\{\Theta\ : r_0=0\}$ are measure zero in $\bbr^N$. Therefore, Lemma \ref{L2.2} holds for almost all initial data $\Theta_0\in\bbr^N$.
\end{remark}

\subsection{Nonidentical Kuramoto model}
In this part, we will review some results for nonidentical Kuramoto oscillators. Actually, there are many literatures related to the nonidentical Kuramoto model, but we will mainly focus on the asymptotical properties of oscillators on half circle and whole circle, respectively. 
\begin{lemma}\label{L2.3}    
\cite{C-H-J-K}
Let $\Theta = \Theta(t)$ be the global smooth solution to the continuous Kuramoto model \eqref{A-1} subject to initial data $\theta_i(0) = \theta_i^0$ and satisfying
\[0 < D(\Theta_0) < \pi, \qquad D(\Omega) > 0, \qquad K > K_e,\]
where $K_e = \frac{D(\Omega)}{\sin D(\Theta_0)}$. Then there exists $t_0 > 0$ such that
\[ D(\Theta(t)) \le \arcsin(\sin D(\Theta_0)) \text{\quad for } t \ge t_0.\]
\end{lemma}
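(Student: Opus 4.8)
The plan is to exploit the fact that for large coupling strength $K$, the dynamics of the phase diameter $D(\Theta(t)) := \max_{i,j}|\theta_i(t)-\theta_j(t)|$ is eventually contractive. First I would use the well-posedness of \eqref{A-1} and a continuity/ordering argument to work with the extremal oscillators: if $\theta_M(t)$ and $\theta_m(t)$ denote (a choice of) the phases realizing the maximum and minimum at time $t$, then $D(\Theta(t)) = \theta_M(t) - \theta_m(t)$, and one can differentiate this Lipschitz function almost everywhere (using the Rademacher/Danskin-type argument standard in Cho–Ha–Jeong–Kim). Plugging \eqref{A-1} into $\frac{d}{dt}D(\Theta(t))$ gives
\begin{equation*}
\frac{d}{dt}D(\Theta(t)) = \Omega_M - \Omega_m + \frac{K}{N}\sum_{j=1}^N\Bigl(\sin(\theta_j-\theta_M) - \sin(\theta_j-\theta_m)\Bigr).
\end{equation*}
The next step is to bound the frequency term by $D(\Omega)$ from above, and to show the coupling sum is strictly negative and bounded away from $0$ as long as $D(\Theta(t))$ stays in a fixed closed subinterval of $(0,\pi)$; concavity of $\sin$ on $[0,\pi]$ together with the fact that each $\theta_j$ lies between $\theta_m$ and $\theta_M$ yields $\sin(\theta_j-\theta_M)-\sin(\theta_j-\theta_m) \le -\sin D(\Theta)\,(\text{something})$, and after summing one obtains an estimate of the form $\frac{d}{dt}D(\Theta) \le D(\Omega) - K\sin D(\Theta)\cdot(\text{positive factor})$.

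Once such a differential inequality is in hand, the argument is a standard invariant-region / barrier analysis. Since $K > K_e = D(\Omega)/\sin D(\Theta_0)$, at $t=0$ the right-hand side is negative, so $D(\Theta(t))$ is initially strictly decreasing; I would then argue that $D(\Theta(t))$ can never return to the level $D(\Theta_0)$, and in fact keeps decreasing until it enters the region where $D(\Omega) = K\sin D$, i.e. where $\sin D = D(\Omega)/K \le \sin D(\Theta_0)$. Because $D$ starts below $\pi/2$ or, more carefully, because the relevant branch is the one with $D < \pi/2$ (here one must check that $D(\Theta(t))$ cannot cross $\pi/2$ upward — it is forced to decrease before it could), the inequality $\sin D(\Theta(t)) \le \sin D(\Theta_0)$ upgrades to $D(\Theta(t)) \le \arcsin(\sin D(\Theta_0))$. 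A comparison-principle argument (Grönwall-type, or a direct contradiction argument assuming $D(\Theta(t)) > \arcsin(\sin D(\Theta_0))$ for arbitrarily large $t$) then produces the finite time $t_0$ after which the bound holds for all $t \ge t_0$.

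The main obstacle I anticipate is not the differential inequality itself but the careful bookkeeping needed to (i) justify differentiating $D(\Theta(t))$ and selecting the extremal indices in a measurable way, and (ii) rule out the "wrong branch" — namely, showing $D(\Theta(t))$ stays in $(0,\pi)$ for all $t$ and does not drift up toward $\pi$ where the coupling would cease to be contractive. Point (ii) is handled by noting that at the first time $D$ would reach any value in $[D(\Theta_0),\pi)$ the derivative is still $\le D(\Omega) - K\sin D(\Theta_0) < 0$ (since $\sin$ is not monotone one has to split at $\pi/2$ and use that the sublevel set $\{D \le D(\Theta_0)\}$ near the start is invariant), giving the usual forward-invariance of $\{D(\Theta) \le D(\Theta_0)\}$ and hence a contradiction. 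Since this is a cited result from \cite{C-H-J-K}, I would keep the exposition at the level of this sketch and refer there for the measure-theoretic details.
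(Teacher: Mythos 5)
The paper does not actually prove Lemma \ref{L2.3}; it is quoted from \cite{C-H-J-K}, and your sketch follows essentially the same route as that reference: the diameter inequality $\frac{d}{dt}D(\Theta)\le D(\Omega)-K\sin D(\Theta)$ (valid while $D(\Theta)\le\pi$, obtained from concavity/subadditivity of $\sin$ on $[0,\pi]$), then forward invariance of $\{D\le D(\Theta_0)\}$ together with a uniformly negative drift on $[\arcsin(\sin D(\Theta_0)),D(\Theta_0)]$, where $\sin D\ge \sin D(\Theta_0)$ by concavity, which gives finite-time entry into, and invariance of, $\{D\le \arcsin(\sin D(\Theta_0))\}$. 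So your proposal is correct in outline and matches the cited proof; the only loose phrase is ``$D$ starts below $\pi/2$'' (neither needed nor true in general), which your own parenthetical correction already supersedes.
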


\begin{lemma}\label{L2.4}
\cite{H-K-R} Suppose that the initial configuration $\Theta_0$ and the natural frequencies $\Omega_i$ satisfy the conditions below
\begin{equation}\label{multiple}
\left\{
\begin{aligned}
&~\frac{1}{N} \sum_{j=1}^N \Omega_j = 0, \quad \frac{1}{N} \sum_{j=1}^N \theta_j^0 = 0, \quad \theta_j^0 \in [-\pi, \pi), \quad 1 \le j \le N,\\
&~r_0 > 0, \quad \theta_j^0 \ne \theta_k^0, \quad 1 \le j \ne k \le N, \quad \max_{1 \le j \le N} |\Omega_j| < \infty.
\end{aligned}
\right.
\end{equation}
Then there exists a positive constant $K_\infty > 0$ such that, for sufficient large coupling strength $K>K_{\infty}$, there exists an asymptotical phase-locked state $\Theta^\infty$ satisfying
\[\lim_{t \to \infty} || \Theta(t) - \Theta^\infty||_\infty = 0,\]
where the norm $||\cdot||_\infty$ is the standard $l^\infty$-norm in $\bbr^N$.
\end{lemma}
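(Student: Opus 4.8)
The statement is quoted from \cite{H-K-R}; here is the route I would take. The proof combines the variational structure of the Kuramoto flow with the \L ojasiewicz convergence theorem for analytic gradient systems (both recalled above) and an a priori confinement estimate that exploits the large coupling and the genericity of the data. First I would record the gradient structure. Since $\sum_{i=1}^N\Omega_i=0$ and $\sum_{i,j}\sin(\theta_j-\theta_i)=0$, the mean phase is conserved, so the flow stays on $H=\{\Theta\in\mathbb{R}^N:\ \sum_i\theta_i=0\}$; on $\mathbb{R}^N$ set
\[
V(\Theta)=-\frac{K}{2N}\sum_{j,k=1}^N\cos(\theta_j-\theta_k)-\sum_{i=1}^N\Omega_i\theta_i ,
\]
which is real-analytic, and check directly that \eqref{A-1} is the gradient system $\dot\Theta=-\nabla V(\Theta)$, so that $\frac{d}{dt}V(\Theta(t))=-|\nabla V(\Theta(t))|^2\le0$. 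Consequently, provided the trajectory remains in a compact subset of $H$, the \L ojasiewicz inequality yields a unique limit point $\Theta^\infty$ with $\nabla V(\Theta^\infty)=0$; a zero of $\nabla V$ is exactly a configuration with all $\dot\theta_i=0$, i.e. a phase-locked state (common frequency $\frac1N\sum_j\Omega_j=0$), and $\|\Theta(t)-\Theta^\infty\|_\infty\to0$. So the whole problem reduces to boundedness of the trajectory on $H$, equivalently (since the phases sum to zero) to a uniform bound on the real-valued diameter $D(\Theta(t))=\max_{i,j}(\theta_i(t)-\theta_j(t))$.

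The heart of the matter, and the step I expect to be the main obstacle, is this confinement estimate: for $K>K_\infty$ (with $K_\infty$ depending on $r_0$, $N$ and $\max_i|\Omega_i|$) one must exhibit a time $t_0$ with $D(\Theta(t_0))<\pi$. The plan is to work with the mean-field form $\dot\theta_i=\Omega_i+Kr\sin(\phi-\theta_i)$ and the order-parameter inequality
\[
\dot r=\frac1N\sum_{k=1}^N\Omega_k\sin(\phi-\theta_k)+\frac{Kr}{N}\sum_{k=1}^N\sin^2(\theta_k-\phi)\ \ge\ -\max_k|\Omega_k|+\frac{Kr}{N}\sum_{k=1}^N\sin^2(\theta_k-\phi),
\]
and, using $r_0>0$ together with elementary order-parameter bounds, control $r(t)$ from below for large $K$. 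Once $r(t)$ is bounded away from $0$, for $K$ large the pull term $Kr\sin(\phi-\theta_i)$ overwhelms $\Omega_i$ except near the two rest points $\theta_i\equiv\phi$ and $\theta_i\equiv\phi+\pi$ of the pull field, so every oscillator is driven toward $\phi(t)$ or toward the antipode $\phi(t)+\pi$; but the antipode is a dynamically unstable rest point (its linearisation carries the growing factor $+Kr$), so for generic data any oscillator lingering near $\phi+\pi$ is ejected in finite time and then dragged into a short arc about $\phi$, yielding $t_0$ with $D(\Theta(t_0))<\pi$. From that instant Lemma~\ref{L2.3}, applied with the data reset at $t_0$ (using $D(\Omega)>0$ and $K$ large), keeps $D(\Theta(t))<\pi$ for all $t\ge t_0$, so the trajectory stays in a compact subset of $H$. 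The genericity hypotheses $r_0>0$ and $\theta_j^0\ne\theta_k^0$ enter precisely here, to discard the measure-zero set of initial data along which $r(t)$ could collapse or the antipodal sub-cluster could persist.

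Combining the two parts completes the argument: for $K>K_\infty$ and generic data the trajectory is bounded on $H$, so the analytic-gradient-flow convergence of the first step applies and delivers the phase-locked limit $\Theta^\infty$ with $\|\Theta(t)-\Theta^\infty\|_\infty\to0$. The gradient-flow/\L ojasiewicz half is soft and already packaged in the preliminaries; the delicate work lies entirely in the confinement estimate --- the lower bound on the order parameter and the dissolution of an antipodal sub-cluster for generic initial configurations --- and that is where the smallness of the ratio $\max_i|\Omega_i|/K$ is indispensable.
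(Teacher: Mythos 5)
Your first half (gradient structure plus the \L ojasiewicz/analytic--gradient convergence once the trajectory is bounded on the zero-mean hyperplane) is exactly the soft part of the argument and matches the route behind the cited result (Lemma \ref{c-gradient}). The gap is in your confinement step. You claim that for $K$ large and data satisfying \eqref{multiple} there is a time $t_0$ with $D(\Theta(t_0))<\pi$, arguing that any oscillator near the antipode $\phi+\pi$ is ejected ``for generic data'' because the antipode is an unstable rest point of the pull field. But the hypotheses \eqref{multiple} only exclude the sets $\{r_0=0\}$ and $\{\theta_j^0=\theta_k^0\}$; they do \emph{not} exclude initial data whose trajectory converges to a bipolar-type locked state in which one oscillator stays at distance $\approx\pi$ from the main cluster forever. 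Indeed, the identical case $\Omega\equiv 0$ is admissible in \eqref{multiple}, and by Lemma \ref{L2.2} the set $\mathcal{A}_2$ of \eqref{D1} consists precisely of such data: along those trajectories $D(\Theta(t))$ never drops below $\pi$, so your intermediate claim is false as stated, and the lemma you are proving must nevertheless cover these data. Saying the exceptional set is measure zero does not help, because the statement is for \emph{all} configurations satisfying \eqref{multiple}, and measure-zero stable manifolds of saddle locked states are not removed by $r_0>0$ and pairwise distinct phases. (Also note Lemma \ref{L2.3} needs $D(\Omega)>0$, which \eqref{multiple} does not guarantee.)

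The actual argument in \cite{H-K-R} avoids half-circle confinement altogether, and this is what Remark \ref{R2.2}(2) records and what Section \ref{sec:5} of this paper reproduces in discrete time: for $K$ large one first shows that a \emph{majority} cluster of $N_0>\frac{N}{2}$ oscillators enters an arc of length $l<2\arccos\frac{N-N_0}{N_0}$ at some finite time $T_*$; this cluster is then positively invariant (the analogue of Lemma \ref{L5.1}), and it serves as an anchor: any remaining oscillator gets trapped near some $2\pi$-translate of the cluster, so the lifted diameter stays uniformly bounded by roughly $4\pi+2l$ (the analogue of Lemma \ref{2pi-vital}) even though the configuration need never lie in a half circle. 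Only then does the gradient-flow/\L ojasiewicz machinery deliver the phase-locked limit. So to repair your proposal you should replace the ``everyone is ejected from the antipode into a half circle, then apply Lemma \ref{L2.3}'' step by the majority-cluster formation and the $2\pi$-trapping estimate, which is where the genuine quantitative work (and the largeness of $K$ relative to $\max_j|\Omega_j|$ and $r_0$) resides.
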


\begin{remark}\label{R2.2}
Actually, in the original papers \cite{C-H-J-K, H-K-R}, the authors constructed more detailed structures in the proof of the Lemma \ref{L2.3} and Lemma \ref{L2.4}.
\begin{enumerate}
\item The authors in \cite{C-H-J-K} actually constructed a positive constant $D^\infty\in(0,\frac{\pi}{2})$, such that $D(\Theta(t)) < D^\infty$ for $t \ge t_0$.
\item For initial data satisfying \eqref{multiple} and sufficiently large coupling strength $K\geq K_{\infty}$, the authors in \cite{H-K-R} actually constructed positive constants $N_0$, $l$ and a time $T_*$ such that,
\begin{equation*}
N_0 \in \bbz^+ \cap \left(\frac{N}{2}, N\right], \quad l \in \left(0, 2 \arccos \frac{N-N_0}{N_0}\right),\quad \max_{1 \le j,k \le N_0} |\theta_j(T_*) - \theta_k(T_*)| < l.
\end{equation*}
\end{enumerate}
\end{remark}

\subsection{Preliminary lemmas}
In this part, we will provide some well known classical results which we will frequently used in the later sections. We first review the classical numerical analysis for the Euler scheme. Consider Cauchy problems for the first-order autonomous ODE system and its corresponding discretized system obtained by the one-step forward Euler scheme with the same initial data: for $T \in (0, \infty]$,
\begin{equation} \label{F-1}
\begin{cases}
\frac{dy}{dt} =f(y),\quad 0\leq t\leq T, \\ 
y(0)=y_0, 
\end{cases}
\quad \mbox{and} \qquad 
\begin{cases}
y_{n+1}=y_n+hf(y_n), \quad n \geq 0, \\
y_0=y(0).
\end{cases}
\end{equation}
Then, a standard convergence results from the discretized system to the continuous system in \eqref{F-1} can be summarized in the following proposition. We first introduce  ``{\it truncation error}" ${\mathcal E}^h _{1}(n)$ and ``{\it global error}" ${\mathcal E}^h_{2}(n)$" as follows. 
\[  {\mathcal E}^h _{1}(n):= \Big \| \frac{dy}{dt} \Big|_{t = nh}  - \frac{y_{n+1}-y_n}{h} \Big \|, \qquad {\mathcal E}^h_{2}(n):=\|y(nh)-y_n \|. \]
\begin{lemma}\label{L2.5}
\emph{\cite{S-Ma}}
Let $T, R \in (0, \infty)$ be positive constants, and suppose that the continuous and discrete system satisfy
\begin{enumerate}
\item
The forcing function $f$ is Lipschitz continuous on the open set ${\mathcal D}$ with Lipschitz constant ${\mathcal L}_f$: 
\[ {\mathcal D} :=\{(t,y)\ :\ 0\leq t\leq T,\quad \|y-y_0\|\leq R\}, \quad \sup_{y_1 \neq y_2, y_1, y_2 \in {\mathcal D}} \frac{|f(y_2) - f(y_1)|}{|y_2 - y_1|} = {\mathcal L}_f < \infty. \] 
\item
The discrete values $y_n$ obtained by the discrete system in \eqref{F-1} satisfy
\[  \|y_n-y_0\|\leq R, \quad \mbox{for all}~n = 0, 1, \cdots, \Big[\frac{T}{h} \Big].  \]
\end{enumerate}
Then, we have the following consistency and convergence results.
\begin{enumerate}
\item (Consistency): Maximal truncation error tends to zero, as time-step tends to zero:
\[\lim\limits_{h\rightarrow 0}\max\limits_{0 \leq n \leq [T/h]}  {\mathcal E}^h _{1}(n) =0. \]
\item (Convergence): the global error can be controlled by the truncation error, more precisely, 
\begin{equation} \label{F-1-1}
 {\mathcal E}^h_{2}(n)  \leq \frac{ \max\limits_{0 \leq n \leq [T/h]}  {\mathcal E}^h _{1}(n)  )}{{\mathcal L}_{f}}\left(e^{{\mathcal L}_{f}nh}-1\right), \quad 0 \leq n \leq [T/h]. 
 \end{equation}
\end{enumerate}
\end{lemma}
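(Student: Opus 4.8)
The statement is the classical convergence theorem for the one-step forward Euler scheme, so the plan is to establish the two assertions separately: the consistency bound via a quantitative modulus-of-continuity estimate for the exact trajectory, and the convergence bound \eqref{F-1-1} via a discrete Gronwall inequality.

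First, for consistency, I would observe that Lipschitz continuity of $f$ on the \emph{bounded} set ${\mathcal D}$ forces $f$ to be bounded there: $\|f(y)\|\leq\|f(y_0)\|+{\mathcal L}_f R=:M$ whenever $\|y-y_0\|\leq R$. Since the exact solution obeys $\dot y(t)=f(y(t))$, this gives $\|\dot y\|\leq M$, hence $\|y(s)-y(nh)\|\leq M|s-nh|$ on each mesh interval. Writing $\dot y(nh)=f(y(nh))$ and applying the fundamental theorem of calculus,
\[
\frac{y((n+1)h)-y(nh)}{h}-\dot y(nh)=\frac1h\int_{nh}^{(n+1)h}\bigl(f(y(s))-f(y(nh))\bigr)\,ds ,
\]
so the Lipschitz bound yields ${\mathcal E}^h_1(n)\leq\frac1h\int_{nh}^{(n+1)h}{\mathcal L}_f M(s-nh)\,ds=\tfrac12{\mathcal L}_f M h$. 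The bound is uniform in $n$, so $\max_{0\leq n\leq[T/h]}{\mathcal E}^h_1(n)\to0$ as $h\to0$. (Using the integral remainder instead of a Taylor expansion is deliberate — it needs no smoothness of $f$ beyond the assumed Lipschitz property.)

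Next, for convergence, set $e_n:=y(nh)-y_n$, so $e_0=0$. Subtracting the discrete update $y_{n+1}=y_n+hf(y_n)$ from the exact identity $y((n+1)h)=y(nh)+hf(y(nh))+h\,r_n$, where $\|r_n\|={\mathcal E}^h_1(n)\leq{\mathcal E}^*:=\max_{0\leq m\leq[T/h]}{\mathcal E}^h_1(m)$, gives $e_{n+1}=e_n+h\bigl(f(y(nh))-f(y_n)\bigr)+h\,r_n$. Provided both $y_n$ and $y(nh)$ lie in ${\mathcal D}$ — the former exactly by hypothesis (2) — the Lipschitz estimate applies and yields $\|e_{n+1}\|\leq(1+h{\mathcal L}_f)\|e_n\|+h{\mathcal E}^*$. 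Iterating from $e_0=0$ and summing the resulting geometric series,
\[
\|e_n\|\leq h{\mathcal E}^*\sum_{k=0}^{n-1}(1+h{\mathcal L}_f)^k=\frac{{\mathcal E}^*}{{\mathcal L}_f}\bigl((1+h{\mathcal L}_f)^n-1\bigr)\leq\frac{{\mathcal E}^*}{{\mathcal L}_f}\bigl(e^{{\mathcal L}_f nh}-1\bigr),
\]
where the last step uses $1+x\leq e^x$; this is precisely \eqref{F-1-1}.

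The one genuinely delicate point — and the step I expect to require the most care — is ensuring that \emph{both} trajectories stay inside the domain ${\mathcal D}$ where the Lipschitz bound is legitimate. For the discrete orbit this is hypothesis (2). For the continuous orbit one needs $\|y(nh)-y_0\|\leq R$, which should be read as part of the standing setup (or derived from $\|y(nh)-y_0\|\leq Mnh\leq MT$ once $MT\leq R$ is also assumed), and which is in any case what guarantees that $y$ even exists on all of $[0,T]$. Once this localization is secured, the consistency estimate and the discrete Gronwall step are both routine, and no compactness or higher regularity is needed anywhere.
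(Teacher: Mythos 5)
Your proof is correct and is essentially the classical argument: the paper itself gives no proof of Lemma \ref{L2.5}, citing \cite{S-Ma}, and your two steps (boundedness of $f$ on ${\mathcal D}$ plus the integral-remainder estimate for consistency, then the discrete Gronwall iteration $\|e_{n+1}\|\leq(1+h{\mathcal L}_f)\|e_n\|+h{\mathcal E}^*$ with $1+x\leq e^x$ for convergence) are exactly the standard textbook proof. Your reading of ${\mathcal E}^h_1(n)$ as the truncation error along the exact solution, and your remark that the continuous trajectory must also remain in ${\mathcal D}$ (implicit in the lemma's loose statement), are the intended interpretations, so no gap remains.
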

\begin{remark} \label{R2.3}
Note that the term $e^{{\mathcal L}_{f}nh}-1$ in the right-hand side of \eqref{F-1-1} grows exponentially. Thus, the convergence result in Lemma \ref{L2.5} is valid only for a finite-time interval. 
\end{remark}
\noindent Next, we introduce some properties of the gradient flow system and a simple inequality of concave functions. The gradient flow structure is quite important to the emergence of synchronization in Kuramoto model, while the sub-additive property of concave functions will be frequently used in the later sections. 
\begin{lemma}\label{L-inequality}
\cite{D-X} ($\L$ojasiewicz inequality) Suppose that $f:D\subseteq\bbr^n\rightarrow \bbr$ is analytic in the open set $D$. Let $\bar{x}$ be a critical point of $f$, i.e., $\nabla f(\bar{x})=0$. Then there exist $r>0$, $c>0$, and $\eta\in[\frac{1}{2},1)$ such that 
\[\|\nabla f(x)\|\geq c|f(x)-f(\bar{x})|^{\eta},\quad \forall x\in B(\bar{x},r).\]
\end{lemma}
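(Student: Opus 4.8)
Lemma~\ref{L-inequality} is the classical \L ojasiewicz gradient inequality; in the present paper it is used only as a black box (with $f$ the Kuramoto potential and $\bar{x}$ one of its equilibria), but the route I would take to prove it is as follows. The plan is to normalize the picture, establish the estimate along every real-analytic arc issuing from the critical point, and then bootstrap this curve-wise bound to a uniform one on a whole ball using the finiteness structure of semianalytic sets. For the normalization, after a translation I take $\bar{x}=0$ and, replacing $f$ by $f-f(\bar{x})$, take $f(\bar{x})=0$, so the assertion becomes $\|\nabla f(x)\|\ge c|f(x)|^{\eta}$ on some ball $B(0,r)$. The inequality is vacuous on $\{f=0\}$, and near a point of $\{f=0\}$ at which $\nabla f\neq 0$ the zero set is a smooth hypersurface, so $|f|$ vanishes there to first order while $\|\nabla f\|$ stays bounded below and any exponent works; the delicate points are the common zeros of $f$ and $\nabla f$, among them $0$ itself. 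As a sanity check on the constant, if the Hessian of $f$ at $0$ is nondegenerate then $\|\nabla f(x)\|\asymp\|x\|$ while $|f(x)|=O(\|x\|^{2})$ near $0$, so $\eta=\tfrac12$ works; and since any admissible exponent below $\tfrac12$ can be raised to $\tfrac12$ near $0$ without weakening the conclusion, stating $\eta\in[\tfrac12,1)$ costs nothing.

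Next I would argue by contradiction. Suppose that for every $\eta\in[\tfrac12,1)$, every $c>0$ and every $r>0$ the semianalytic set $\{\,x\in B(0,r):\ f(x)\neq 0,\ \|\nabla f(x)\|<c|f(x)|^{\eta}\,\}$ is nonempty. A bookkeeping argument built on the curve selection lemma for semianalytic sets then produces a single real-analytic arc $\gamma:[0,\varepsilon)\to B(0,r)$ with $\gamma(0)=0$, $f(\gamma(t))\neq 0$ for $t>0$, and
\[\frac{\|\nabla f(\gamma(t))\|}{|f(\gamma(t))|^{\eta}}\ \longrightarrow\ 0\quad\text{as }t\to 0^{+},\qquad\text{for every }\eta<1.\]

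Along this arc, write the analytic, not-identically-zero function $t\mapsto f(\gamma(t))$, which vanishes at $t=0$, as $f(\gamma(t))=at^{p}+o(t^{p})$ with $a\neq 0$ and $p\ge 1$. Since $0$ is a critical point, the $t$-derivative of $f(\gamma(t))$ at $t=0$ equals $\langle\nabla f(0),\gamma'(0)\rangle=0$, so in fact $p\ge 2$. Differentiating gives $\tfrac{d}{dt}f(\gamma(t))=ap\,t^{p-1}+o(t^{p-1})$, whereas the chain rule gives $\tfrac{d}{dt}f(\gamma(t))=\langle\nabla f(\gamma(t)),\gamma'(t)\rangle\le\|\nabla f(\gamma(t))\|\,\|\gamma'(t)\|$ with $\|\gamma'(t)\|$ bounded near $0$. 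Hence $\|\nabla f(\gamma(t))\|\gtrsim t^{p-1}\asymp|f(\gamma(t))|^{(p-1)/p}$, so the ratio in the display stays bounded away from $0$ for the exponent $\eta=(p-1)/p\in[\tfrac12,1)$ --- contradicting that display. This establishes the inequality along every real-analytic arc through $0$.

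The step that genuinely requires the non-elementary machinery is the construction, in the argument above, of a \emph{single} real-analytic arc witnessing the failure of the inequality for all exponents $\eta<1$ simultaneously: this rests on the finiteness theorems for semianalytic sets, which control how the bad sets $\{\|\nabla f\|<c|f|^{\eta}\}$ can accumulate at $0$. An alternative route, in some respects cleaner, avoids the curve-selection bookkeeping altogether and invokes Hironaka resolution of singularities: after a proper real-analytic modification $\pi$, the pullback $f\circ\pi$ is locally a monomial times a unit, for which the gradient inequality with an explicit exponent is elementary, and properness of $\pi$ together with the identity $\nabla(f\circ\pi)=(D\pi)^{\top}\big((\nabla f)\circ\pi\big)$ pushes the estimate back down to a neighborhood of $0$. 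Either way, it is this geometric ingredient --- curve selection plus semianalytic finiteness, or resolution of singularities --- that is imported from \cite{D-X}, and it is the real obstacle in a from-scratch proof.
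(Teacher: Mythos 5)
This lemma is not proved in the paper at all: it is imported verbatim from \cite{D-X} (ultimately it is {\L}ojasiewicz's classical gradient inequality), so there is no in-paper argument to compare your proposal against, and treating it as a black box is exactly what the authors do. Judged as a standalone proof, however, your sketch has one genuine gap, and it sits precisely where you placed the ``bookkeeping argument.'' The curve selection lemma applies to a \emph{fixed} semianalytic set, i.e.\ to the bad set for one fixed (rational) exponent $\eta$ and one fixed $c$; it yields an analytic arc $\gamma$ with $\gamma(0)=0$ along which $\|\nabla f(\gamma(t))\|< c\,|f(\gamma(t))|^{\eta}$ for that particular $\eta$. Your Puiseux computation along such an arc gives $\|\nabla f(\gamma(t))\|\gtrsim |f(\gamma(t))|^{(p-1)/p}$ with $p$ depending on the arc, and this contradicts the failure of the inequality only when $\eta>(p-1)/p$; since $p$ varies with the arc (and hence with $\eta$), no contradiction results for a fixed $\eta$. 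The single arc on which $\|\nabla f(\gamma(t))\|/|f(\gamma(t))|^{\eta}\to 0$ for \emph{every} $\eta<1$, which your contradiction needs, is not something curve selection produces; obtaining that uniformity over arcs/exponents is essentially the whole content of the theorem, so the step you describe as routine bookkeeping is the theorem itself in disguise.

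Your second route --- Hironaka resolution, monomialization of $f\circ\pi$, and pushing the estimate down by properness of $\pi$ --- is the standard complete proof and is sound in outline (this is how {\L}ojasiewicz-type gradient inequalities are actually established, e.g.\ in Bierstone--Milman), though of course it imports resolution of singularities rather than proving anything from scratch. The peripheral remarks are fine: raising an exponent $\theta<\tfrac12$ to $\tfrac12$ after shrinking $r$ so that $|f|\le 1$ is legitimate, as is the observation that the nondegenerate-Hessian case gives $\eta=\tfrac12$. In summary: consistent with the paper's use of the lemma as a citation, but not a self-contained proof, because the curve-selection branch as written does not close.
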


\begin{lemma}\label{c-gradient} 
\cite{D-X} Suppose $f(x)$ is an analytic function. Let $x(t)$ be uniformly bounded and follow a gradient flow with $f(x)$ to be the potential i.e. $\dot{x}=-\nabla_xf(x)$. Then $x(t)$ converges to a limit $x^{\infty}$. 
\end{lemma}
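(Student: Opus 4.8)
\emph{Proof strategy.} The plan is to run the classical \L ojasiewicz argument for gradient flows, with Lemma \ref{L-inequality} supplying the crucial gradient lower bound. First I would note that the potential is monotone along the flow: $\frac{d}{dt} f(x(t)) = -\|\nabla_x f(x(t))\|^2 \le 0$. Since $x(t)$ is uniformly bounded, the whole trajectory lies in a compact set on which $f$, being continuous (indeed analytic), is bounded below; hence $f(x(t))$ decreases to a finite limit $f^\infty$, and in particular $f(x(t)) \ge f^\infty$ for all $t$. Integrating the identity above gives $\int_0^\infty \|\nabla_x f(x(t))\|^2\,dt = f(x(0)) - f^\infty < \infty$, so there is a sequence $t_k \to \infty$ with $\nabla_x f(x(t_k)) \to 0$. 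By boundedness we may pass to a subsequence along which $x(t_k) \to x^\infty$; then $\nabla_x f(x^\infty) = 0$, i.e. $x^\infty$ is a critical point of $f$, and by continuity $f(x^\infty) = f^\infty$. (If $f(x(t_1)) = f^\infty$ for some finite $t_1$, then $f(x(t)) \equiv f^\infty$ and hence $\dot x(t) = -\nabla_x f(x(t)) = 0$ for $t \ge t_1$, so the trajectory is eventually constant and there is nothing to prove; assume from now on $f(x(t)) > f^\infty$ for all $t$.)

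Next I would apply Lemma \ref{L-inequality} at the critical point $x^\infty$ to obtain $r > 0$, $c > 0$ and $\eta \in [\tfrac12, 1)$ with $\|\nabla_x f(x)\| \ge c\,|f(x) - f^\infty|^{\eta}$ for all $x \in B(x^\infty, r)$. The engine of the proof is the auxiliary function $H(t) := \bigl(f(x(t)) - f^\infty\bigr)^{1-\eta}$, which is positive and differentiable. As long as $x(t) \in B(x^\infty, r)$, one computes
\[
-\frac{d}{dt} H(t) = (1-\eta)\bigl(f(x(t)) - f^\infty\bigr)^{-\eta}\,\bigl\|\nabla_x f(x(t))\bigr\|^2 \ge (1-\eta)\,c\,\bigl\|\nabla_x f(x(t))\bigr\| = (1-\eta)\,c\,\|\dot x(t)\|,
\]
using the \L ojasiewicz estimate in the middle inequality and $\dot x = -\nabla_x f$ at the end. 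Integrating over any time interval on which the trajectory stays in the ball yields $\int \|\dot x(t)\|\,dt \le \frac{1}{(1-\eta)c}\, H(t)$, a bound on the arc length of the trajectory; finiteness of arc length forces $x(t)$ to be Cauchy as $t \to \infty$, hence convergent to some limit, which must equal $x^\infty$ since $x^\infty$ is an accumulation point of the trajectory.

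\emph{Main obstacle.} The delicate point — and the step I expect to require the most care — is a trapping argument ensuring that the trajectory \emph{enters and then never leaves} a ball $B(x^\infty, r)$ on which the \L ojasiewicz inequality is valid, so that the differential inequality for $H$ can legitimately be integrated over an infinite interval $[t_0, \infty)$ rather than merely a bounded one. Here I would use a bootstrap: since $x^\infty$ is an accumulation point of the trajectory and $f(x(t)) \to f^\infty$, I can choose $t_0$ so large that $\|x(t_0) - x^\infty\| < r/2$ and simultaneously $\frac{1}{(1-\eta)c}\,H(t_0) < r/2$. Then, on the maximal interval $[t_0, T)$ on which $x(t)$ remains in $B(x^\infty, r)$, the arc-length estimate gives $\|x(t) - x(t_0)\| \le \frac{1}{(1-\eta)c} H(t_0) < r/2$, hence $\|x(t) - x^\infty\| < r$ throughout; a standard continuity / maximality argument then shows $T = \infty$. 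Once the trajectory is trapped, the arc-length bound holds globally on $[t_0, \infty)$ and the conclusion follows as above. The rest of the argument is routine, so I would not belabor the constants.
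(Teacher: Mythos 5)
Your proof is correct and follows the standard \L ojasiewicz convergence argument: monotone decrease of $f$ along the flow, extraction of a critical accumulation point $x^{\infty}$, the auxiliary function $(f-f^{\infty})^{1-\eta}$ controlling arc length via Lemma \ref{L-inequality}, and a trapping bootstrap to keep the trajectory in the ball where the inequality holds. Note that the paper states Lemma \ref{c-gradient} only as a citation from \cite{D-X} without proof, but your argument coincides in substance with the paper's own proof of the discrete analogue, Theorem \ref{T3.1}, which uses the same monotonicity, \L ojasiewicz inequality and arc-length estimate, there organized as a contradiction argument rather than your maximal-interval bootstrap.
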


\begin{lemma}\label{sub-additive}
Let $f(x)$ be a concave function defined on $[0, +\infty)$ and $f(0) \geq 0$, then f is sub-additive on $[0,+\infty)$ i.e.
\[f(a)+f(b) \geq f(a+b),\quad a,b \in [0,+\infty).\]
\end{lemma}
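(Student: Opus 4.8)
The plan is to exploit the defining inequality of concavity by writing each of $a$ and $b$ as a convex combination of the two endpoints $a+b$ and $0$. First I would dispose of the degenerate case: if $a = 0$ (or symmetrically $b = 0$), then the assertion reads $f(0) + f(b) \geq f(b)$, which is immediate from the hypothesis $f(0) \geq 0$. So from now on assume $a, b > 0$, hence $a + b > 0$.

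Next, observe that
\[
a = \frac{a}{a+b}(a+b) + \frac{b}{a+b}\cdot 0
\]
is a genuine convex combination of the points $a+b$ and $0$ in $[0,+\infty)$, with weights $\frac{a}{a+b}, \frac{b}{a+b} \in (0,1)$ summing to $1$. Concavity of $f$ then gives
\[
f(a) \geq \frac{a}{a+b}\, f(a+b) + \frac{b}{a+b}\, f(0) \geq \frac{a}{a+b}\, f(a+b),
\]
where in the last inequality I used $f(0) \geq 0$. By the same token, writing $b = \frac{b}{a+b}(a+b) + \frac{a}{a+b}\cdot 0$ yields $f(b) \geq \frac{b}{a+b}\, f(a+b)$. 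Adding the two estimates and using $\frac{a}{a+b} + \frac{b}{a+b} = 1$ produces $f(a) + f(b) \geq f(a+b)$, the desired sub-additivity.

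The only point that warrants any attention is the role of the sign condition $f(0) \geq 0$: it is exactly what allows the $f(0)$-terms to be discarded, and without it the conclusion genuinely fails (e.g. an affine function with negative intercept is concave but not sub-additive). I do not foresee any real obstacle here — once the convex-combination decomposition of $a$ and $b$ is written down, the proof is a two-line application of the concavity inequality.
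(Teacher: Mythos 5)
Your proof is correct and follows essentially the same route as the paper: both arguments write $a$ and $b$ as convex combinations of $a+b$ and $0$, apply concavity together with $f(0)\geq 0$ (the paper phrases this as the intermediate inequality $f(tx)\geq tf(x)$), and add the two resulting estimates. Your explicit treatment of the case $a=0$ or $b=0$ is a minor tidiness improvement over the paper, which implicitly divides by $a+b$.
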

\begin{proof}
Due to the fact that $f(x)$ is concave and $f(0)\geq 0$, we immediately obtain the following inequality
\[ f(tx) = f(tx+(1-t) \cdot 0) \geq tf(x) + (1-t)f(0) \geq tf(x), \quad x \in (0,+\infty),\quad 0\leq t\leq 1.\]
Therefore, for $a,b \in [0, +\infty)$, we have
\begin{align*}
\begin{aligned}
f(a) + f(b) = f\left(  \frac{a(a+b)}{a+b} \right) + f\left( \frac{(a+b)b}{a+b} \right) \geq \frac{a}{a+b} f(a+b) + \frac{b}{a+b}f(a+b)= f(a+b).
\end{aligned}
\end{align*}
\end{proof}

\section{Discrete gradient flow}\label{sec:3}
\setcounter{equation}{0}
\vspace{0.5cm}
In this section, we will introduce an asymptotical stability for the discrete gradient flow. It is well known that the gradient flow structure of dynamical system is very important because it will generally lead to the stability of the system. For instance, the author applied Lemma \ref{L-inequality} and Lemma \ref{c-gradient} in \cite{D-X} to show the emergence of synchronization of Kuramoto oscillators. However, the Lemma \ref{c-gradient} cannot be directly applied to the discrete model. Therefore, it is very important to establish a discrete version of such kind of stability theory from the discrete gradient flow structure.
\begin{theorem}\label{T3.1} 
Suppose $f(x)$ is an analytic function in a convex compact domain $D$. Let $x(n)$ be uniformly bounded in the compact domain $D$ for any $n$, and follow a gradient flow in discrete sense with $f(x)$ to be the potential i.e. 
\begin{equation}\label{C1}
x(n+1)-x(n)=-\nabla_xf(x(n))h,
\end{equation}
where $h$ is the mesh size. Then for sufficiently small $h$, there exists a $x^{\infty}$ such that 
\[\lim_{n\rightarrow+\infty}x(n)=x^{\infty},\quad \nabla_xf(x^{\infty})=0.\]
\end{theorem}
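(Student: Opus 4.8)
The plan is to run the classical Łojasiewicz-gradient-inequality argument behind Lemma \ref{c-gradient}, with the one genuinely new ingredient being a careful control of the quadratic Taylor remainder produced by a single Euler step. Since $f$ is analytic on the convex compact set $D$, its Hessian is bounded there, say $\|\nabla_x^2 f\|\leq M$ on $D$. For every $n$ the segment joining $x(n)$ and $x(n+1)=x(n)-h\nabla_x f(x(n))$ lies in $D$ by convexity, so Taylor's formula with Lagrange remainder gives
\[
f(x(n+1)) \leq f(x(n)) + \nabla_x f(x(n))\cdot(x(n+1)-x(n)) + \tfrac{M}{2}\|x(n+1)-x(n)\|^2 = f(x(n)) - h\Big(1-\tfrac{Mh}{2}\Big)\|\nabla_x f(x(n))\|^2.
\]
Hence, as soon as $h\leq 1/M$ (this is the meaning of ``sufficiently small $h$''), one obtains the discrete energy-dissipation inequality
\[
f(x(n+1)) \leq f(x(n)) - \tfrac{h}{2}\|\nabla_x f(x(n))\|^2, \qquad n\geq 0.
\]

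From this, $\{f(x(n))\}$ is non-increasing and bounded below on the compact set $D$, hence converges to some $f^\infty$; set $a_n:=f(x(n))-f^\infty\geq 0$, so $a_n\downarrow 0$. Summation gives $\sum_n h\|\nabla_x f(x(n))\|^2<\infty$, so in particular $\|x(n+1)-x(n)\|=h\|\nabla_x f(x(n))\|\to 0$ and $\nabla_x f(x(n))\to 0$. By compactness $\{x(n)\}$ has an accumulation point $x^*$, and continuity forces $f(x^*)=f^\infty$ and $\nabla_x f(x^*)=0$, i.e. $x^*$ is a critical point (if $a_n=0$ for some $n$ the energy inequality already gives $x(m)=x(n)$ for all $m\geq n$ and we are done, so assume $a_n>0$). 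Apply Lemma \ref{L-inequality} at $x^*$: there are $r,c>0$ and $\eta\in[\tfrac12,1)$ with $\|\nabla_x f(x)\|\geq c\,(f(x)-f^\infty)^{\eta}$ on $B(x^*,r)$. Combining this with concavity of $t\mapsto t^{1-\eta}$, we get, whenever $x(n)\in B(x^*,r)$,
\[
a_n^{1-\eta} - a_{n+1}^{1-\eta} \geq (1-\eta)\,a_n^{-\eta}(a_n - a_{n+1}) \geq (1-\eta)\,a_n^{-\eta}\cdot\tfrac{h}{2}\|\nabla_x f(x(n))\|^2 \geq \tfrac{c(1-\eta)}{2}\,h\|\nabla_x f(x(n))\| = \tfrac{c(1-\eta)}{2}\|x(n+1)-x(n)\|,
\]
using $a_n^{-\eta}\geq c/\|\nabla_x f(x(n))\|$ in the last step. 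Thus a single Euler step is dominated by the decrement of $a_n^{1-\eta}$.

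It remains to trap the tail of the orbit inside $B(x^*,r)$. Fix a subsequence $x(n_k)\to x^*$; using $\|x(n+1)-x(n)\|\to 0$ and $a_n^{1-\eta}\to 0$, choose $k$ so large that $\|x(n_k)-x^*\|<r/3$, $\tfrac{2}{c(1-\eta)}a_{n_k}^{1-\eta}<r/3$, and $\|x(n+1)-x(n)\|<r/3$ for all $n\geq n_k$. An induction then shows $x(n)\in B(x^*,r)$ for every $n\geq n_k$: if $x(n_k),\dots,x(m)\in B(x^*,r)$, the displayed estimate telescopes to $\sum_{n=n_k}^{m}\|x(n+1)-x(n)\|\leq \tfrac{2}{c(1-\eta)}\big(a_{n_k}^{1-\eta}-a_{m+1}^{1-\eta}\big)\leq \tfrac{2}{c(1-\eta)}a_{n_k}^{1-\eta}<r/3$, so $\|x(m+1)-x^*\|\leq\|x(m+1)-x(n_k)\|+\|x(n_k)-x^*\|<r$. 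Consequently $\sum_{n\geq n_k}\|x(n+1)-x(n)\|<\infty$, the sequence $\{x(n)\}$ is Cauchy, hence converges to some $x^\infty$, and continuity of $\nabla_x f$ together with $\nabla_x f(x(n))\to 0$ gives $\nabla_x f(x^\infty)=0$.

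\textbf{Main obstacle.} The genuinely new point relative to the continuous case (Lemma \ref{c-gradient}) is the quadratic term $\tfrac{M}{2}\|x(n+1)-x(n)\|^2$ in the one-step expansion: it is exactly the ``higher order iteration error'' that forces the smallness restriction on $h$, and in the actual discrete Kuramoto application $f$ is analytic only on a suitable region, so the per-step displacement must be controlled more delicately — this is presumably why the authors prefer to argue through a convex combination $\alpha x(n)+(1-\alpha)x(n+1)$ of two consecutive iterates rather than through a crude Hessian bound. Both routes, however, reduce to the same discrete energy-dissipation inequality, after which the Łojasiewicz-plus-telescoping mechanism is robust; a secondary technical subtlety, absent in the continuous setting, is that the trapping step additionally needs the step sizes to be eventually small so the orbit cannot jump out of $B(x^*,r)$, which is supplied for free by $\sum_n h\|\nabla_x f(x(n))\|^2<\infty$.
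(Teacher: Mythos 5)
Your proposal is correct and takes essentially the same route as the paper: the same Taylor-expansion/Hessian-bound dissipation inequality (the paper's \eqref{C5}--\eqref{C6}), a subsequential limit shown to be critical, and the \L ojasiewicz inequality used to telescope the decrement of $(f-f^{\infty})^{1-\eta}$ against the step lengths. The only differences are presentational: the paper runs the telescoping through the piecewise-linear interpolation $\bar{f}(t)$ and $g=\bar{f}^{\,1-\eta}$ in \eqref{C10}--\eqref{C18} and concludes by contradiction, whereas you stay fully discrete via concavity of $t\mapsto t^{1-\eta}$ and a trapping induction that keeps the tail inside the \L ojasiewicz ball --- a slightly tidier way of ensuring the inequality is invoked only at points where it applies.
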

\begin{proof}
$\bullet$ (Step 1) In the first step, we construct the limit $x^{\infty}$. Since $x(n)$ is uniformly bounded in $D$, we immediately obtain that there exists a subsequence $x(n_k)$ and $x^{\infty}$ such that 
\begin{equation}\label{C2}
\lim\limits_{k\rightarrow+\infty}x(n_k)=x^{\infty}.
\end{equation} 
In the following, we will prove this subsequence limit actually is the limit of the whole sequence.\newline

\noindent$\bullet$ (Step 2) In the second step, we show that $x^{\infty}$ is a critical point. As $x(n)$ is uniformly bounded and $f(x)$ is analytic, the second order derivatives of $f(x)$ can reach the maximum and minimum value. More precisely, there exists a positive constant $C$ such that 
\begin{equation}\label{C3}
|\partial_{x_i}\partial_{x_j}f(x)|\leq C,\quad x\in D,\quad C=\max_{i,j}\max_{x\in D}|\partial_{x_i}\partial_{x_j}f(x)|.
\end{equation}
Then let $H(x)$ be the hessian matrix at $x$, we apply the Taylor expansion to imply that there exists a $\xi(n)$ such that 
\begin{equation}\label{C4}
\begin{aligned}
&f(x(n+1))-f(x(n))\\
&=\nabla_xf(x(n))(x(n+1)-x(n))+\frac{1}{2}(x(n+1)-x(n))H(\xi(n))(x(n+1)-x(n)).
\end{aligned}
\end{equation}
As $D$ is convex, we know that $\xi(n)$ also belongs to $D$ and thus we can apply \eqref{C3} to conclude that $|\partial_{x_i}\partial_{x_j}f(\xi(n))|\leq C$. Therefore, we combine \eqref{C1}, \eqref{C3} and \eqref{C4} to obtain
\begin{equation}\label{C5}
\begin{aligned}
f(x(n+1))-f(x(n))\leq-|\nabla_xf(x(n))|^2h+\frac{Ch^2}{2}|\nabla_xf(x(n))|^2.
\end{aligned}
\end{equation}
Then for sufficiently small $h$ such that $h<\frac{2}{C}$, \eqref{C5} implies that
\begin{equation}\label{C6}
\begin{aligned}
f(x(n+1))-f(x(n))\leq-|\nabla_xf(x(n))|^2h(1-\frac{Ch}{2})<0.
\end{aligned}
\end{equation}
The inequality \eqref{C6} shows that $f(x(n))$ is monotonic decreasing. On the other hand, as $x(n)$ is uniformly bounded in $D$ and $f(x)$ is analytic in $D$, we obtain that $f(x(n))$ is uniformly bounded. Therefore we can find a finite limit $f^{\infty}$ such that 
\begin{equation}\label{C7}
|\lim_{n\rightarrow +\infty}f(x(n))|=|f^{\infty}|<+\infty.
\end{equation}
Thus, combining \eqref{C7} and the continuity of $f$, we immediately obtain that 
\[f(x^{\infty})=\lim_{k\rightarrow+\infty}f(x(n_k))=f^{\infty}.\]
Moreover, we add up the inequality \eqref{C6} to obtain that 
\[f(x(0))-f(x(+\infty))=-\sum_{n=0}^{+\infty}(f(x(n+1))-f(x(n)))\geq\sum_{n=0}^{+\infty}|\nabla_xf(x(n))|^2h(1-\frac{Ch}{2}).\]
As both $f(x(0))$ and $f(x(+\infty)$ are finite and $h$ is sufficiently small, the finite of the sum of sequence $|\nabla_xf(x(n))|^2$ implies 
\[\lim_{n\rightarrow+\infty}|\nabla_xf(x(n))|=0.\]
Combining above formula, \eqref{C2} and the continuity of $|\nabla_xf(x(n))|$, we can conclude that 
\begin{equation}\label{C8}
|\nabla_xf(x^{\infty})|=\lim_{k\rightarrow+\infty}|\nabla_xf(x(n_k))|=0.
\end{equation}\newline

\noindent$\bullet$ (Step 3) In this step, we connect the discrete-time profile to the continuous time profile. According to Lemma \ref{L-inequality}, there exist $r>0$, $q>0$, and $\eta\in[\frac{1}{2},1)$ such that 
\begin{equation}\label{C9}
\|\nabla f(x)\|\geq q|f(x)-f(x^{\infty})|^{\eta},\quad \forall x\in B(x^{\infty},r).
\end{equation} 
On the other hand, as $f(x^{\infty})$ is finite, without loss of generality, we assume $f(x^{\infty})=0$. Then we let 
\begin{equation}\label{C10}
\bar{f}(t)=\frac{t-nh}{h}f(x(n+1))+\frac{(n+1)h-t}{h}f(x(n)),\quad nh\leq t\leq (n+1)h.
\end{equation} 
Due to the monotonic decreasing of $f(x(n))$, it is obviously that $\bar{f}(t)$ is Lipschitz continuous with respect to $t$ and monotonic decreasing to $f(x^{\infty})=0$. More precisely, we combine \eqref{C5} and \eqref{C10} and sufficiently small $h$ to have 
\begin{equation}\label{C11}
\frac{d}{dt}\bar{f}(t)=\frac{f(x(n+1))-f(x(n))}{h}\leq-|\nabla_xf(x(n))|^2(1-\frac{Ch}{2})<0,\quad \lim_{t\rightarrow+\infty}\bar{f}(t)=f(x^{\infty})=0.
\end{equation}
Next we set $g(t)=(\bar{f}(t))^{1-\eta}$ where $\eta$ is from \eqref{C9}. From \eqref{C10} and \eqref{C11}, we have $\lim\limits_{t\rightarrow+\infty}g(t)=0$ and  
\begin{equation}\label{C12}
\begin{aligned}
&\frac{d}{dt}g(t)=(1-\eta)(\bar{f}(t))^{-\eta}\frac{d}{dt}\bar{f}(t)\leq -(1-\eta)(\bar{f}(t))^{-\eta}|\nabla_xf(x(n))|^2(1-\frac{Ch}{2})<0.
\end{aligned}
\end{equation}
Then we apply \eqref{C9}, \eqref{C10}, \eqref{C12} and the decreasing of $f(x(n))$ to have an estimate of $|\nabla_xf(x(n))|$ in the interval $nh< t< (n+1)h$ as follows
\begin{equation}\label{C13} 
\begin{aligned}
&|\nabla_xf(x(n))|\\
&\leq -\frac{d}{dt}g(t)\frac{\left(\frac{t-nh}{h}f(x(n+1))+\frac{(n+1)h-t}{h}f(x(n)) \right)^{\eta}}{(1-\eta)|\nabla_xf(x(n))|(1-\frac{Ch}{2})}\\
&\leq -\frac{d}{dt}g(t)\frac{\left[f(x(n)) \right]^\eta}{(1-\eta)|\nabla_xf(x(n))|(1-\frac{Ch}{2})}\\
\end{aligned}
\end{equation}\newline

\noindent$\bullet$ (Step 4) In this step, we will prove that $\lim\limits_{n\rightarrow+\infty}x(n)=x^{\infty}$ by contradiction. Suppose not, then there exists a positive constant $l$ such that, for any $M$ there exists an integer $n_M\geq M$ satisfying 
\begin{equation}\label{C14}
|x(n_M)-x^{\infty}|\geq l.
\end{equation}  
Without loss of generality, we can assume $l$ is sufficiently small and $l\leq r$ where $r$ is in \eqref{C9}. Therefore, the $\L$ojasiewicz inequality in \eqref{C9} still holds in $B(x^{\infty},l)$. On the other hand, due to \eqref{C2}, \eqref{C7} and \eqref{C11}, we can find a sufficient large $n_0$ such that 
\begin{equation}\label{C15}
|x(n_0)-x^{\infty}|<\frac{l}{2},\quad |g(t)|<\frac{lq(1-\eta)(1-\frac{Ch}{2})}{4},\quad t\geq n_0h.
\end{equation}
Moreover, according to \eqref{C14}, we can find $n^*$ such that 
\begin{equation}\label{C16}
|x(n^*)-x^{\infty}|\geq l,\quad n^*>n_0.
\end{equation}
Now we consider the difference between $x(n^*)$ and $x(n_0)$. In fact, we apply \eqref{C1} to obtain  
\begin{equation}\label{C17}
\begin{aligned}
x(n^*)-x(n_0)=\sum_{i=n_0}^{n^*-1}(x(i+1)-x(i))=\sum_{i=n_0}^{n^*-1}(-\nabla_xf(x(i))h)=\sum_{i=n_0}^{n^*-1}\int_{ih}^{(i+1)h}(-\nabla_xf(x(i)))dt.
\end{aligned}
\end{equation}
Next, we combine \eqref{C9}, \eqref{C13}, \eqref{C15} and \eqref{C17} to obtain that
\begin{equation}\label{C18}
\begin{aligned}
&|x(n^*)-x(n_0)|\\
&\leq \sum_{i=n_0}^{n^*-1}\int_{ih}^{(i+1)h}\Big(-\frac{d}{dt}g(t)\frac{[f(x(i))]^\eta}{(1-\eta)|\nabla_xf(x(i))|(1-\frac{Ch}{2})}\Big)dt\\
&\leq \sum_{i=n_0}^{n^*-1}\int_{ih}^{(i+1)h}\Big(-\frac{d}{dt}g(t)\frac{1}{(1-\eta)q(1-\frac{Ch}{2})}\Big)dt\\
&= \sum_{i=n_0}^{n^*-1}\Big(\frac{g(ih)-g((i+1)h)}{(1-\eta)q(1-\frac{Ch}{2})}\Big)\\
&=\frac{g(n_0h)-g(n^*h)}{(1-\eta)q(1-\frac{Ch}{2})}\leq \frac{l}{2}.
\end{aligned}
\end{equation}
Combining \eqref{C15} and \eqref{C18}, we obtain that 
\begin{equation}\label{C19}
|x(n^*)-x^{\infty}|\leq |x(n^*)-x(n_0)|+|x(n_0)-x^{\infty}|<l,
\end{equation}
which is obvious contradicted to \eqref{C16}. Therefore, we conclude that  $\lim\limits_{n\rightarrow+\infty}x(n)=x^{\infty}$.\newline
\end{proof}
\begin{remark}\label{R3.1}
The identical and non-identical Kuramoto model can be treated as a gradient flow on the circle and real line, respectively. As circle is a compact manifold, we can directly apply Theorem \ref{T3.1} to conclude the existence of the asymptotical equilibrium and the corresponding stability for identical Kuramoto model. While for non-identical Kuramoto model, we can obtain the stability once we show the uniform boundedness of the oscillators on the real line. However, in order to obtain the convergence rate, we need to analyze the $\L$ojasiewicz exponent which is far from trivial.\newline 
\end{remark}

\section{Discrete identical Kuramoto model}\label{sec:4}
\setcounter{equation}{0}
\vspace{0.5cm}
In this section, we will pay attention to the identical discrete Kuramoto model and its large time behavior. Due to the conservation of the mean natural frequency, without loss of generality, we may assume the natural frequencies for all particles are identically equal to zero. According to Theorem \ref{T3.1} and Remark \ref{R3.1}, although we can conclude the convergence to the equilibrium state for identical oscillators, we cannot figure out any convergence rate. Therefore, in this section, we will choose another manner to attain the convergence and the corresponding rate.\newline 

For the case $N=2$, the two particles are always contained in a half circle. Then, it is obviously that phase synchronization will emerge except for initial data such that $|\theta_1(0)-\theta_2(0)|=\pi$. Therefore, we will only discuss the case when $N\geq3$. According to Remark \ref{R2.1}, almost all initial data $\Theta_0$ satisfy the condition \eqref{B2}. Thus, it is reasonable for us to study the discrete model with the initial configuration with property \eqref{B2}. Then, according to Lemma \ref{L2.2}, the bipolar set $\mathcal{I}_b$ has cardinality no more than one in the continuous Kuramoto model. Therefore, we can split the set of initial data with property \eqref{B2} into two subsets,  
\begin{equation}\label{D1}
\mathcal{A}_1 := \left\{\Theta_0 \left|\right. |\mathcal{I}_b(\Theta_0)| = 0,\ \mbox{$\Theta_0$ satisfies \eqref{B2}}\right\}, \quad  \mathcal{A}_2 := \left\{\Theta_0 \left|\right. |\mathcal{I}_b(\Theta_0)| = 1,\ \mbox{$\Theta_0$ satisfies \eqref{B2}}\right\}.
\end{equation}
Then, applying the conservation of the total phases, we can further construct the asymptotical limits in Remark \ref{R2.1}, based on the settings $\mathcal{I}_s$ and $\mathcal{I}_b$ in Lemma \ref{L2.2}. More precisely, for case $\mathcal{A}_1$, we can represent the limits of $\phi(t)$ and $\theta_j(t)$ as below
\begin{equation}\label{D2}
\lim_{t \to + \infty} \phi(t) = -\frac{1}{N} \sum_{j=1}^N 2k_j \pi := \phi_0^*, \quad \lim_{t \to +\infty} \theta_j(t) = 2k_j \pi + \phi_0^*, \quad k_j \in \bbz, \ j =1, \ldots, N.
\end{equation}
Similarly, for case $\mathcal{A}_2$, without loss of generality, we assume that $\mathcal{I}_b = \{N\}$. Then the limits of $\phi(t)$ and $\theta_j(t)$ can be further represented as
\begin{align}\label{D3}
\left\{\begin{aligned}
&\lim_{t \to + \infty} \phi(t) =  -\frac{1}{N} [\sum_{j=1}^{N-1} 2k_j \pi + (2k_N + 1) \pi] := \phi_1^*\\
&\lim_{t \to + \infty} \theta_j(t) = 2k_j \pi + \phi_1^*, \ j =1,\ldots, N-1, \qquad \lim_{t \to + \infty} \theta_N(t) = (2k_N+1) \pi + \phi_1^*.
\end{aligned}
\right.
\end{align}
Note that all the above analysis are based on the continuous Kuramoto model, but the classification \eqref{D1} of initial data can still be applied to the study of discrete Kuramoto model. Then we have the following theorem for identical discrete Kuramoto model.
\begin{theorem}\label{T4.1}
(Identical Kuramoto) For $N\geq 3$, we let $\Theta^h(n) = (\theta^h_1(n), \ldots, \theta^h_N(n))$ be a solution to the discrete identical Kuramoto model \eqref{A-2} with initial phase $\Theta_0$ satisfying the following conditions:
\[\Omega_i=0,\quad \frac{1}{N} \sum_{j=1}^N\theta_j^0=0,  \quad r_0 > 0, \quad \theta_i^0 \neq \theta_j^0, \quad i \ne j, \quad 1 \le i ,j \le N.\]
Then, with the time-step $h$ sufficiently small, there exists a phase-locked state $\Theta^\infty$ , constants $C>0, \alpha > 0$, and a step $n_e > 0$ such that
\[\| \Theta^h(n) - \Theta^\infty\|_\infty < Ce^{-\alpha (n-n_e)h}, \qquad n\ge n_e,\]
where $C,\alpha$ are both dependent on the initial configuration. Moreover, all the possible asymptotic phase-locked states can be expressed as follows:
\begin{enumerate}
\item $\Theta^\infty =(2k_1 \pi + \phi_0^*,\ldots, 2k_{N}\pi + \phi_0^*)$, or 
\item $\Theta^\infty = (2k_1\pi + \phi_1^*, \ldots, 2k_{N-1} \pi + \phi_1^*, (2k_N+1)\pi + \phi_1^*)$,
\end{enumerate}
where $k_i \in \bbz, \ i =1,2,\ldots,N.$
\end{theorem}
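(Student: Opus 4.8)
The plan is to transport the qualitative picture of the continuous flow (Lemmas~\ref{L2.1}--\ref{L2.2}) to the iteration \eqref{A-2} via a finite–time Euler approximation (Lemma~\ref{L2.5}), and then to run a purely discrete, quantitative contraction argument in a neighbourhood of the limiting configuration. Throughout one uses the conservation law obtained by summing \eqref{A-2} over $i$ and invoking the antisymmetry of $\sin$: $\sum_{i=1}^N\theta_i^h(n)=\sum_{i=1}^N\theta_i^0=0$ for all $n$, and likewise $\sum_{i=1}^N\theta_i(t)\equiv0$ for the continuous solution. We may assume $N\ge3$ (the case $N=2$ being elementary), and the hypothesis on $\Theta_0$ is precisely condition \eqref{B2}, so $\Theta_0\in\mathcal{A}_1\cup\mathcal{A}_2$ in the notation of \eqref{D1}; recall also that Theorem~\ref{T3.1} (via Remark~\ref{R3.1}) already yields, for small $h$, convergence of $\Theta^h(n)$ to \emph{some} equilibrium, so the remaining tasks are the exponential rate and the identification of the possible limits. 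First I would fix a small $\delta>0$: by Lemmas~\ref{L2.1}--\ref{L2.2} and the explicit limits \eqref{D2}--\eqref{D3}, the continuous orbit from $\Theta_0$ converges (in the $\sum=0$ lift) to a configuration $\Theta_c^\infty$ that is synchronised if $\Theta_0\in\mathcal{A}_1$ and has exactly one antipodal oscillator if $\Theta_0\in\mathcal{A}_2$, so there is a finite $T_*=T_*(\Theta_0,\delta)$ with $\|\Theta(T_*)-\Theta_c^\infty\|_\infty<\delta$; since the vector field of \eqref{A-1} is smooth and the orbit is bounded on $[0,T_*]$, Lemma~\ref{L2.5} gives $\|\Theta^h(n_*)-\Theta(T_*)\|_\infty<\delta$ for every sufficiently small $h$ (a threshold depending on $\Theta_0,\delta$), where $n_*:=[T_*/h]$. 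Thus at the step $n_e:=n_*$ the iterate is within $2\delta$ of $\Theta_c^\infty$.

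\textbf{Synchronised case.} I would choose a $2\pi$–shift $m\in\bbz^N$, fixed once at step $n_e$, so that in the lift $\widetilde\Theta^h:=\Theta^h-2\pi m$ all $\widetilde\theta^h_i(n_e)$ lie in an arc shorter than $2\delta$; because the right–hand side of \eqref{A-2} depends only on phase differences, $\widetilde\Theta^h$ solves the same iteration. With $D(n):=\max_i\widetilde\theta^h_i(n)-\min_i\widetilde\theta^h_i(n)$, a mean value expansion of $\sin$ yields $\widetilde\theta^h_i(n+1)-\widetilde\theta^h_j(n+1)=\big(\widetilde\theta^h_i(n)-\widetilde\theta^h_j(n)\big)\big(1-\tfrac{hK}{N}\sum_k\cos(\widetilde\theta^h_k(n)-\xi_k)\big)$ with each $\xi_k$ between $\widetilde\theta^h_i(n)$ and $\widetilde\theta^h_j(n)$; as long as $D(n)$ stays below a fixed $\delta_0<\tfrac\pi3$ and $h<1/K$, the cosine sum is $\ge N/2$, hence $D(n+1)\le(1-\tfrac{hK}{2})D(n)$. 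An induction then gives simultaneously forward invariance ($D(n)<\delta_0$ for all $n\ge n_e$) and $D(n)\le D(n_e)e^{-Kh(n-n_e)/2}$. Since each step moves every oscillator by at most $Kh\,D(n)$, the sequences $\widetilde\theta^h_i(n)$ are Cauchy with a common limit $c$ and $\|\widetilde\Theta^h(n)-c\mathbf{1}\|_\infty\le Ce^{-Kh(n-n_e)/2}$; undoing the shift, $\Theta^\infty=(c+2\pi m_1,\dots,c+2\pi m_N)$, and $\sum_i\theta^\infty_i=0$ forces $c=-\tfrac{2\pi}{N}\sum_i m_i=\phi_0^*$, i.e.\ $\Theta^\infty$ has the form (1).

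\textbf{One–antipodal case and conclusion.} Here I would pass to a lift in which $\widetilde\theta^h_1,\dots,\widetilde\theta^h_{N-1}$ cluster near $0$ while $\widetilde\theta^h_N$ is within $2\delta$ of $\pi$. The same mean value computation shows the group diameter $D_G(n):=\max_{1\le i,j\le N-1}(\widetilde\theta^h_i-\widetilde\theta^h_j)$ still contracts by a factor $1-\beta_1 h$ with $\beta_1>0$, since in the relevant cosine sum the single antipodal term contributes $\ge-1$, dominated by the $N-1\ge2$ nearly aligned terms. On the other hand the outlier deviation $w(n):=\widetilde\theta^h_N(n)-\tfrac1{N-1}\sum_{j<N}\widetilde\theta^h_j(n)-\pi$ obeys an \emph{expanding} recursion $w(n+1)=(1+\beta_2 h)w(n)+O\!\big(h(D_G(n)^2+w(n)^2)\big)$ with $\beta_2>0$, reflecting that the one–antipodal state is a saddle of the potential. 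This gives a dichotomy: either $w(n)\to0$ (the exceptional, measure–zero situation in which the iterate sits on the discrete stable manifold of the saddle), and then $D_G(n),w(n)\to0$ geometrically and, as in the synchronised case, $\Theta^h(n)$ converges exponentially to a state of the form (2); or $|w(n)|$ grows and, after finitely many steps, all $N$ oscillators lie in an arc shorter than $\pi$ (the group having stayed small), so the synchronised–case argument applies from that step and gives exponential convergence to a state of the form (1). In all cases one obtains a step $n_e$, constants $C,\alpha>0$ depending on $\Theta_0$, and a phase-locked limit $\Theta^\infty$ of the form (1) or (2) with $\|\Theta^h(n)-\Theta^\infty\|_\infty<Ce^{-\alpha(n-n_e)h}$ for $n\ge n_e$; the list (1)--(2) is exhaustive because $|\mathcal{I}_b|\le1$ (Lemma~\ref{L2.2}) controls the antipodal part of any such limit.

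\textbf{Main obstacle.} The delicate part is the $n$–uniform quantitative control underlying the two cases: closing the bootstrap ``diameter small $\Rightarrow$ the region is forward invariant $\Rightarrow$ the diameter contracts'', keeping the $O(h^2)$ iteration errors subordinate to the linear decay, and—in the one–antipodal case—simultaneously tracking the contracting group–diameter mode and the expanding outlier mode near the saddle (in particular checking the favourable sign of the outlier's feedback on $D_G$ for every $N\ge3$), handling the dichotomy, and carrying out the $2\pi$–bookkeeping that reconciles the clustered limit with the $\sum=0$ normalisation appearing in (1)--(2).
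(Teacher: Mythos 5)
Your overall architecture is the paper's: use the continuous asymptotics \eqref{D2}--\eqref{D3} plus the finite-time Euler estimate of Lemma \ref{L2.5} to land, at some step $n_e$, within $O(\delta)$ of the continuous limit, and then run a self-contained discrete contraction. In the synchronised case your argument is equivalent to Lemmas \ref{L4.1}--\ref{L4.3} (your mean-value factorisation of the pairwise differences, giving $D(n+1)\le(1-\tfrac{Kh}{2})D(n)$ with simultaneous forward invariance, replaces the paper's order-preservation plus the subadditivity Lemma \ref{sub-additive}, and is arguably cleaner), and the identification of the limit through $\sum_i\theta^h_i(n)=0$ matches Remark \ref{R4.2}. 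Where you genuinely diverge is the one-antipodal case: you mount a saddle analysis (contracting group diameter $D_G$, expanding outlier deviation $w$ with rate $\beta_2>0$ -- which is indeed correct, $\beta_2\approx K$) and a stable-manifold/escape dichotomy, whereas the paper conditions directly on the geometric sandwich $\hat{\theta}^h_1(n)+\pi\le\hat{\theta}^h_N(n)\le\hat{\theta}^h_{N-1}(n)+\pi$ of \eqref{D43}--\eqref{D44}: if it holds for all $n$, the conservation law $\sum_i\hat{\theta}^h_i(n)=0$ pins the outlier within $\tfrac{N-1}{N}D_G(n)$ of $\tfrac{(N-1)\pi}{N}$ (Lemma \ref{L4.7}), so its exponential rate is inherited from $D_G$ with no unstable-mode analysis at all; if it fails at some step, the whole configuration lies in an arc of length $\eta<\pi$ and the half-circle lemmas preceding and including Lemma \ref{L4.9} finish.

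Two concrete soft spots in your version, both of which the paper's simpler devices avoid. First, in the escape branch you apply ``the synchronised-case argument'' from the first step at which all oscillators lie in an arc shorter than $\pi$; but at that step the diameter is just below $\pi$, so your cosine bound $\sum_k\cos(\cdot)\ge N/2$ (valid only for diameter $<\pi/3$, and more generally only for diameter $<\pi/2$ can one keep all cosines positive) is false as stated. You need the half-circle estimate the paper gets from Lemma \ref{sub-additive}, namely $\sum_j[\sin(\hat{\theta}^h_j-\hat{\theta}^h_N)-\sin(\hat{\theta}^h_j-\hat{\theta}^h_1)]\le -N\sin D$, together with a separate forward-invariance bootstrap of the region $\{D<\eta<\pi\}$ under the $O(h)$ step (this is exactly the content of the lemma before Lemma \ref{L4.9} and of \eqref{D51}). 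Second, on the bipolar branch you assert that $w(n)\to0$ already yields geometric decay of $w$; with an expanding recursion this requires a backwards-in-$n$ summation argument, and the exhaustiveness of your dichotomy (``$w\not\to0$ implies escape in finitely many steps'') requires the bootstrap against the decaying $O(h(D_G^2+w^2))$ errors that you flag but do not carry out. Both issues are repairable, but note that the paper's case split (sandwich holds forever versus sandwich fails once) makes them unnecessary: it never needs to decide whether the bipolar branch is exceptional, only to show that each branch leads to a limit of type (1) or (2).
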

In the following, we will prove Theorem \ref{T4.1} by studying the dynamics of discrete Kuramoto model with initial data in $\mathcal{A}_1$ and $\mathcal{A}_2$ respectively.\newline 

\subsection{Case $\mathcal{A}_1$ (phase synchronization)} \label{sec:4.1}
For initial data $\Theta_0\in\mathcal{A}_1$, we have $|I_b| = 0$. Then it follows from \eqref{D2} that for any given $\varepsilon > 0$ and any $i \in \mathcal{I}_s$, we can find a time $T_\varepsilon > 0$ and a equilibrium state $\phi_0^*$ such that 
 \begin{equation}\label{c-each limit}
|\theta_i(T_\varepsilon) - 2k_i \pi - \phi_0^*| < \varepsilon, \qquad i \in \mathcal{I}_s,
\end{equation}
where $\mathcal{I}_s$ is defined in Lemma \ref{L2.2} and $\theta_i(t)$ is the solution to the continuous Kuramoto model \eqref{A-1} with initial data $\Theta_0$ and identical natural frequencies. Then, the solution to the descrete identical Kuromoto model \eqref{A-2} on the interval $[0, T_\varepsilon]$ can be well approximated by the solution to the continuous model with same initial data, provided the step size $h$ is sufficiently small. Therefore, we can obtain a good estimate on the oscillators for discrete-time model based on the approximation.\newline 

Then we define the effective phase $\hat{\Theta}^h(n)$ for the discrete identical Kuramoto model with respect to the initial data $\Theta_0$ as below,
\begin{equation}\label{D5}
\hat{\theta}^h_i(n) = \theta_i^h(n) - 2k_i\pi - \phi_0^*, \qquad n = 0,1,\ldots \quad i=1,2,\ldots,N.
\end{equation}
Note in this case, every oscillator belongs to $\mathcal{I}_s$. Then, the effective phases for the oscillators in the synchronization group $\mathcal{I}_s$, can be defined as follows,  
\begin{align}\label{D6}
\left\{\begin{aligned}
&\hat{\Theta}^h_s := (\hat{\theta}^h_{i_1},\ldots, \hat{\theta}^h_{i_{|\mathcal{I}_s|}}), \quad i_k \in \mathcal{I}_s,\quad \hat{\theta}^h_M := \max_{j \in \mathcal{I}_s} \hat{\theta}^h_j, \quad \hat{\theta}^h_m := \min_{j \in \mathcal{I}_s} \hat{\theta}^h_j, \\
&D(\hat{\Theta}^h_s) := \max_{i,j \in \mathcal{I}_s} |\hat{\theta}^h_i - \hat{\theta}^h_j|= \hat{\theta}^h_M - \hat{\theta}^h_m.
\end{aligned}
\right.
\end{align}

\begin{remark}\label{R4.1}
The definition of the effective phase depends on $\phi_0^*$ which is the asymptotical limits of the oscillators with initial data in $\mathcal{A}_1$. Therefore, the effective phases actually depend on the initial data. More precisely, for any given fixed initial data satisfying \eqref{B2}, we can define corresponding effective phases. 
\end{remark}

\begin{lemma}\label{L4.1}
For $N\geq 3$, we let $\Theta^h(n)=(\theta^h_1(n), \theta^h_2(n), \ldots, \theta^h_N(n))$ be a solution to the discrete identical Kuramoto model \eqref{A-2} with initial data $\Theta_0\in\mathcal{A}_1$. Then for any given $\varepsilon > 0$ and sufficiently small step size $h\ll 1$, we can find a positive integer $l$ such that
\[D(\hat{\Theta}^h_s(l)) < \varepsilon,\] 
where $\mathcal{A}_1$ and $\hat{\Theta}^h_s$ are defined in \eqref{D1} and \eqref{D6} respectively.
\end{lemma}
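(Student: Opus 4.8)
The plan is to combine the long-time convergence of the continuous identical Kuramoto flow with the finite-time convergence of the forward Euler scheme recorded in Lemma \ref{L2.5}. Since $\Theta_0\in\mathcal{A}_1$, Lemma \ref{L2.1} together with $|\mathcal{I}_b(\Theta_0)|=0$ forces $\mathcal{I}_s=\{1,\dots,N\}$, and \eqref{D2} tells us that the continuous solution $\theta_i(t)$ with the same initial data and $\Omega_i=0$ satisfies $\theta_i(t)\to 2k_i\pi+\phi_0^*$ for fixed integers $k_i$ and a fixed $\phi_0^*$. Hence, given $\varepsilon>0$, I would first freeze a time $T=T_\varepsilon$ such that
\[
|\theta_i(T)-2k_i\pi-\phi_0^*|<\frac{\varepsilon}{8},\qquad i=1,\dots,N,
\]
so that the continuous ``effective phases'' $\theta_i(T)-2k_i\pi-\phi_0^*$ (same shifts as in \eqref{D5}) already have diameter smaller than $\varepsilon/4$.

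Next I would verify that Lemma \ref{L2.5} applies to \eqref{A-2} on the now fixed interval $[0,T]$. In the identical case the velocity field $f(\Theta)=\big(\frac{K}{N}\sum_{j}\sin(\theta_j-\theta_i)\big)_{i}$ is bounded by $K$ in the sup norm and is $C^\infty$, hence globally Lipschitz on bounded sets, so hypothesis (1) of Lemma \ref{L2.5} holds. Moreover $\|\Theta^h(n)-\Theta_0\|_\infty\le Knh\le KT$ for all $n\le[T/h]$, and the continuous trajectory obeys the same bound, so hypothesis (2) holds with $R=KT$. The convergence part of Lemma \ref{L2.5} then yields $h_0>0$ such that, for every $0<h<h_0$,
\[
\max_{0\le n\le[T/h]}\|\Theta(nh)-\Theta^h(n)\|_\infty<\frac{\varepsilon}{8}.
\]

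Finally I would set $l:=[T/h]$, so $T-h<lh\le T$, and use the time-Lipschitz estimate $\|\Theta(lh)-\Theta(T)\|_\infty\le Kh$ for the continuous solution together with the triangle inequality to get, for each $i\in\mathcal{I}_s$,
\begin{align*}
|\hat\theta^h_i(l)|&=|\theta^h_i(l)-2k_i\pi-\phi_0^*|\\
&\le\|\Theta^h(l)-\Theta(lh)\|_\infty+\|\Theta(lh)-\Theta(T)\|_\infty+|\theta_i(T)-2k_i\pi-\phi_0^*|\\
&<\frac{\varepsilon}{8}+Kh+\frac{\varepsilon}{8}.
\end{align*}
Shrinking $h$ once more so that $Kh<\varepsilon/4$ gives $|\hat\theta^h_i(l)|<\varepsilon/2$ for all $i\in\mathcal{I}_s$, whence, by the definition of $D(\hat\Theta^h_s)$ in \eqref{D6},
\[
D(\hat\Theta^h_s(l))=\max_{i,j\in\mathcal{I}_s}|\hat\theta^h_i(l)-\hat\theta^h_j(l)|\le 2\max_{i\in\mathcal{I}_s}|\hat\theta^h_i(l)|<\varepsilon,
\]
which is the assertion.

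The only delicate point is the order of the two limiting procedures: one must first freeze $T=T_\varepsilon$ coming from the long-time behaviour of the continuous flow, and only afterwards send $h\to0$ (so that $l=[T/h]\to\infty$). The finite-time Euler estimate of Lemma \ref{L2.5} cannot be used if $T$ is allowed to grow like $1/h$, as emphasized in Remark \ref{R2.3}; apart from this bookkeeping and the elementary a priori bound on the iterates, the argument is routine.
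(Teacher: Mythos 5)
Your proposal is correct and follows essentially the same route as the paper's proof: fix $T_\varepsilon$ from the long-time convergence of the continuous identical flow, invoke the finite-time Euler error estimate of Lemma \ref{L2.5} for sufficiently small $h$, and conclude by the triangle inequality and the definition \eqref{D6}. The only differences are cosmetic — you take $l=[T/h]$ and absorb the offset $|T-lh|$ via a $Kh$ time-Lipschitz bound (and explicitly check the hypotheses of Lemma \ref{L2.5}), whereas the paper simply requires $T_\varepsilon/h$ to be an integer.
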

\begin{proof}
We will show the proof by two steps.

\noindent $\bullet$ (Step 1): In this step, to avoid confusion, we will denote the solution to discrete-time model by $\theta_i^h(n)$ and the solution to the continuous time model by $\theta_i(t)$. According to \eqref{D1}, \eqref{D2}, \eqref{D3} and \eqref{c-each limit}, for a given initial data $\Theta_0\in\mathcal{A}_1$ and positive constant $\varepsilon$, we can find a time $T_\varepsilon > 0$ and a equilibrium state $\phi_0^*$ such that \eqref{c-each limit} holds i.e.
 \begin{equation*}
|\theta_i(T_\varepsilon) - 2k_i \pi - \phi_0^*| < \frac{\varepsilon}{4}, \qquad i \in \mathcal{I}_s,
\end{equation*}
where $\theta_i(T_\varepsilon)$ is the solution to the continuous Kuramoto model \eqref{A-1} at time $T_\varepsilon$. On the other hand, based on the result in Lemma \ref{L2.5} about the global error in Euler's scheme, there exists constants $M > 0$ and $L_M > 0$ which are both independent of $i = 1,2,\ldots,N$ such that
\[|\theta_i(nh) - \theta_i^h(n)| \le Me^{L_M T_\varepsilon}h, \qquad n = 0,1,\ldots,l, \quad i = 1,2,\ldots,N,\]
where $h = \frac{T_\varepsilon}{l}$ and $l$ is a positive integer. Then for the above given $\varepsilon$ in \eqref{c-each limit}, we choose the step size $h$ sufficiently small such that $Me^{L_M T_\varepsilon}h < \frac{\varepsilon}{4}$. Moreover, we require $\frac{T_\varepsilon}{h}$ is an integer and denote it by $l$. Therefore, we have
\begin{equation}\label{D-7}
|\theta_i(nh) - \theta_i^h(n)| < \frac{\varepsilon}{4}, \qquad n = 0,1,\ldots,l, \quad i =1,2,\ldots,N.
\end{equation}
When $n = l$, we have $T_\varepsilon=lh$ and thus we apply \eqref{c-each limit} and \eqref{D-7} to obtain that
\begin{equation}\label{d-each limit}
|\theta_i^h(l) - 2k_i\pi - \phi_0^*| \le |\theta_i(T_\varepsilon) - \theta_i^h(l)| + |\theta_i(T_\varepsilon) - 2k_i\pi - \phi_0^*| < \frac{\varepsilon}{2}, \quad i \in \mathcal{I}_s.
 \end{equation}

\noindent $\bullet$ (Step 2): In this step, we will use $\theta_i^h(n)$ to denote the $n$-th step of the solution to the discrete-time model. According to the definition of effective phase in \eqref{D5} and the estimate \eqref{d-each limit}, we immediately have
\[|\hat{\theta}^h_i(l)| < \frac{\varepsilon}{2}, \quad i \in \mathcal{I}_s.\]
Hence, we find a step $l$ such that
\[D(\hat{\Theta}^h_s(l)) = \max_{i,j \in \mathcal{I}_s} |\hat{\theta}^h_i(l) - \hat{\theta}^h_j(l)| < \varepsilon, \quad i,j \in \mathcal{I}_s.\]
\end{proof}
According to Lemma \ref{L4.1},  in order to study the large time behavior of the discrete-time Kuramoto model, we can set  $l$ to be the initial step. Therefore, without loss of generality, we may assume the initial configuration satisfying the properties below,
\begin{equation}\label{D9}
 D(\hat{\Theta}^h_s(0)) = \hat{\theta}^h_N(0) - \hat{\theta}^h_1(0) < \varepsilon,\quad \hat{\theta}^h_N(0) > \hat{\theta}^h_{N-1}(0) > \cdots > \hat{\theta}^h_1(0).
 \end{equation}
Moreover, according to the discrete Kuramoto model \eqref{A-2}, initial configuration $\Theta_0$ satisfying \eqref{B2} and the definition of effective phase $\hat{\Theta}^h(n)$ in \eqref{D5}, we immediately conclude that $\hat{\Theta}^h(n)$ satisfies the identical discrete Kuramoto model below
\begin{equation}\label{hat system}
\left\{\begin{aligned}
&\hat{\theta}^h_i(n+1) = \hat{\theta}^h_i(n) + \frac{Kh}{N} \sum_{j=1}^N \sin(\hat{\theta}^h_j(n) - \hat{\theta}^h_i(n)), \quad i = 1,2,\ldots,N,\\
&\sum_{i=1}^N\hat{\theta}^h_i(0)=0.
\end{aligned}
\right.
\end{equation}

\begin{lemma}\label{L4.2} 
For $N\geq 3$, we let $\hat{\Theta}^h(n) = (\hat{\theta}^h_1(n), \ldots, \hat{\theta}^h_N(n))$ be a solution to the discrete identical Kuramoto model \eqref{hat system} with the initial configuration satisfying \eqref{D9} i.e.
\begin{equation}\label{D11}
 D(\hat{\Theta}^h_s(0)) = \hat{\theta}^h_N(0) - \hat{\theta}^h_1(0) < \varepsilon,\quad \hat{\theta}^h_N(0) > \hat{\theta}^h_{N-1}(0) > \cdots > \hat{\theta}^h_1(0).
 \end{equation}
where $\varepsilon$ is a sufficient small positive constant. Then we conclude that the effective phase diameter is uniform bounded by the same $\varepsilon$ in \eqref{D11} and the order of the effective phase will be preserved for all $n$ i.e.
\begin{align*}
\left\{\begin{aligned}
&D(\hat{\Theta}^h_s(n)) < \varepsilon, \qquad n = 0, 1,2,\ldots,\\
&\hat{\theta}^h_N(n) > \hat{\theta}^h_{N-1}(n) > \cdots > \hat{\theta}^h_1(n), \qquad n = 0, 1,2,\ldots.
\end{aligned}
\right.
\end{align*}
\end{lemma}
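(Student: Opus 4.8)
The plan is to prove the two assertions together by a single induction on $n$, since in the inductive step the diameter bound and the order-preservation statement feed into each other. The inductive hypothesis at step $n$ reads
\[
\hat{\theta}^h_N(n) > \hat{\theta}^h_{N-1}(n) > \cdots > \hat{\theta}^h_1(n), \qquad D(\hat{\Theta}^h_s(n)) = \hat{\theta}^h_N(n)-\hat{\theta}^h_1(n) < \varepsilon,
\]
and the base case $n=0$ is precisely the assumption \eqref{D11}.

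The crucial ingredient for the inductive step is a one-step quasi-contraction identity. For any two indices $i<k$, subtract the corresponding evolution equations in \eqref{hat system} and apply the mean value theorem (equivalently the identity $\sin a-\sin b=2\cos\frac{a+b}{2}\sin\frac{a-b}{2}$) to each difference $\sin(\hat{\theta}^h_j(n)-\hat{\theta}^h_k(n))-\sin(\hat{\theta}^h_j(n)-\hat{\theta}^h_i(n))$; this produces angles $\nu_j$, each of absolute value bounded by the width of the interval spanned by $\hat{\theta}^h_1(n),\dots,\hat{\theta}^h_N(n)$, hence $|\nu_j|<D(\hat{\Theta}^h_s(n))<\varepsilon$, such that
\[
\hat{\theta}^h_k(n+1)-\hat{\theta}^h_i(n+1)=\bigl(\hat{\theta}^h_k(n)-\hat{\theta}^h_i(n)\bigr)\Bigl(1-\frac{Kh}{N}\sum_{j=1}^N\cos\nu_j\Bigr).
\]
Taking $\varepsilon$ below $\pi/2$ ensures $\cos\nu_j>\cos\varepsilon>0$ for every $j$, so $\frac{Kh}{N}\sum_{j=1}^N\cos\nu_j$ is positive and at most $Kh$; hence, provided $h<1/K$, the bracket lies in $(0,1)$. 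Applying this with $(i,k)=(i,i+1)$ for each $i$ gives $\hat{\theta}^h_{i+1}(n+1)>\hat{\theta}^h_i(n+1)$, which establishes the ordering at step $n+1$; applying it with $(i,k)=(1,N)$ and using the ordering just obtained to identify $\hat{\theta}^h_N(n+1)$ and $\hat{\theta}^h_1(n+1)$ as the maximum and the minimum, we obtain $D(\hat{\Theta}^h_s(n+1))=\hat{\theta}^h_N(n+1)-\hat{\theta}^h_1(n+1)<\hat{\theta}^h_N(n)-\hat{\theta}^h_1(n)<\varepsilon$. This closes the induction.

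I do not expect a genuine obstacle here; the argument is short and self-contained. The only delicate points are bookkeeping ones: fixing once and for all a smallness threshold for $h$ (the condition $h<1/K$, or any explicit bound of this type, chosen compatibly with the threshold already imposed in Lemma \ref{L4.1}; this is precisely what prevents the forward-Euler step from overshooting and reversing the order of two oscillators); choosing $\varepsilon<\pi/2$ so that the cosines above stay bounded away from $0$ throughout; and — the one place where the order of reasoning matters — deriving order-preservation at step $n+1$ first, and only then using it to read off which oscillators realise the diameter at step $n+1$. Note finally that since the bracket is strictly positive and the initial gaps $\hat{\theta}^h_{i+1}(0)-\hat{\theta}^h_i(0)$ are strictly positive, all these gaps stay strictly positive for every $n$, so the strict ordering never degenerates and the identity above remains meaningful at every step.
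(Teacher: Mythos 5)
Your proof is correct. The one-step identity you use is exact: applying the mean value theorem to each term $\sin(\hat{\theta}^h_j(n)-\hat{\theta}^h_k(n))-\sin(\hat{\theta}^h_j(n)-\hat{\theta}^h_i(n))$ indeed yields $\hat{\theta}^h_k(n+1)-\hat{\theta}^h_i(n+1)=\bigl(\hat{\theta}^h_k(n)-\hat{\theta}^h_i(n)\bigr)\bigl(1-\frac{Kh}{N}\sum_j\cos\nu_j\bigr)$ with $|\nu_j|\le D(\hat{\Theta}^h_s(n))<\varepsilon$, and with $\varepsilon<\pi/2$ and $h<1/K$ the bracket lies in $(0,1)$, so both the ordering and the diameter contract in one step. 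Your route differs from the paper's in two respects. First, the paper does not factor via the mean value theorem; it splits the sum into the indices below and above $i$ and invokes the sub-additivity of $\sin$ on small nonnegative arguments (Lemma \ref{sub-additive}) together with $\sin x<x$ to get the lower bound $(1-Kh)(\hat{\theta}^h_i(k)-\hat{\theta}^h_{i-1}(k))$ for adjacent gaps. Second, the paper's logical structure is two separate steps: it first proves order preservation by induction under the standing assumption that the diameter stays below $\varepsilon$, and then establishes the diameter bound by a contradiction ("stopping step") argument, rerunning the order argument up to the stopping step and showing the extremal gap actually decreases there. Your single joint induction, in which order preservation at step $n+1$ is derived first and then used to identify the extremal pair, dispenses with both the sub-additivity lemma and the stopping-step contradiction, and gives the same conclusion with an exact multiplicative contraction factor; the paper's sub-additivity route, on the other hand, is the mechanism it reuses later (e.g.\ in Lemmas \ref{L4.3}, \ref{L4.5} and \ref{L4.6}) where one oscillator sits near the antipode and an exact all-$N$ cosine factorization would no longer have a sign, so its extra generality is what the subsequent lemmas buy from it.
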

\begin{proof}
\noindent $\bullet$ (Step $1$): In the first step, we assume $D(\hat{\Theta}^h_s(n)) < \varepsilon$  holds for all step $n$, then we claim that the order of the oscillators is preserved for all steps n i.e.
\begin{equation}\label{D12} 
\hat{\theta}^h_N(n) > \hat{\theta}^h_{N-1}(n) > \cdots > \hat{\theta}^h_1(n), \qquad n = 0,1,2,\ldots.
\end{equation}
Actually, according to \eqref{D11}, the order \eqref{D12} automatically holds for the  initial step $\hat{\Theta}(0)$. Now we assume that the order \eqref{D12} is preserved for step $k$, i.e.
\begin{equation}\label{D13}
\hat{\theta}^h_{N}(k) > \hat{\theta}^h_{N-1}(k) > \cdots > \hat{\theta}^h_{1}(k).
\end{equation}
Then, for the step $k+1$, the difference of $\hat{\theta}^h_i(k+1)$ and $\hat{\theta}^h_{i-1}(k+1)$, where $i =2,3, \cdots, N,$ satisfies the following equation, 
\begin{align}\label{D14}
\begin{aligned}
&\hat{\theta}^h_{i}(k+1) -\hat{\theta}^h_{i-1}(k+1) \\
&= \hat{\theta}^h_{i}(k) -\hat{\theta}^h_{i-1}(k) + \frac{Kh}{N} \sum^{N}_{j=1} \sin(\hat{\theta}^h_j(k) - \hat{\theta}^h_i(k)) - \frac{Kh}{N} \sum^{N}_{j=1}  \sin(\hat{\theta}^h_j(k) - \hat{\theta}^h_{i-1}(k))\\
&=\hat{\theta}^h_{i}(k) -\hat{\theta}^h_{i-1}(k) -\frac{Kh}{N}\sum^{i-1}_{j=1} \left[ \sin(\hat{\theta}^h_i(k) - \hat{\theta}^h_j(k)) - \sin(\hat{\theta}^h_{i-1}(k) - \hat{\theta}^h_j(k)) \right] \\
&-\frac{Kh}{N}\sum^{N}_{j=i} \left[ \sin(\hat{\theta}^h_j(k) - \hat{\theta}^h_{i-1}(k)) - \sin(\hat{\theta}^h_j(k) - \hat{\theta}^h_i(k)) \right]\\
&=\mathcal{I}_1+\mathcal{I}_2+\mathcal{I}_3.
\end{aligned}
\end{align}
The first term $\mathcal{I}_1$ is obviously positive due to the assumption \eqref{D13}. For $\mathcal{I}_2$ and $\mathcal{I}_3$, we note that $i$ and $i-1$ are two adjacent oscillators. Therefore, according to the order \eqref{D13}, the differences $\hat{\theta}^h_i(k) - \hat{\theta}^h_j(k)$ and $\hat{\theta}^h_{i-1}(k) - \hat{\theta}^h_j(k)$ are of the same sign for all $j\neq i,\ i-1$. Moreover, as $D(\hat{\Theta}^h_s(k)) < \varepsilon\ll 1$, the difference  $\hat{\theta}^h_i(k) - \hat{\theta}^h_j(k)$ and $\sin(\hat{\theta}^h_i(k) - \hat{\theta}^h_j(k))$ are of the same sign. Then, we can apply Lemma \ref{sub-additive} to obtain 
\begin{align}\label{D15}
\left\{\begin{aligned}
& - \sum^{i-1}_{j=1} \left[ \sin(\hat{\theta}^h_i(k) - \hat{\theta}^h_j(k)) - \sin(\hat{\theta}^h_{i-1}(k) - \hat{\theta}^h_j(k)) \right]  \geq -(i-1) \sin(\hat{\theta}^h_i(k) - \hat{\theta}^h_{i-1}(k))),\\
& - \sum^{N}_{j=i} \left[ \sin(\hat{\theta}^h_j(k) - \hat{\theta}^h_{i-1}(k)) - \sin(\hat{\theta}^h_j(k) - \hat{\theta}^h_i(k)) \right] \geq - (N - i +1) \sin(\hat{\theta}^h_i(k) - \hat{\theta}^h_{i-1}(k)).
\end{aligned}
\right.
\end{align}
We combine \eqref{D14} and \eqref{D15} and apply the inequality $\sin x < x$ for $x > 0$ to obtain the estimate of the oscillator difference at $k+1$ step as below,
\begin{align*}
\begin{aligned}
&\hat{\theta}^h_{i}(k+1) - \hat{\theta}^h_{i-1}(k+1)\\
&> \hat{\theta}^h_{i}(k) -\hat{\theta}^h_{i-1}(k) - Kh(\hat{\theta}^h_i(k) - \hat{\theta}^h_{i-1}(k))= (1 - Kh)(\hat{\theta}^h_{i}(k) -\hat{\theta}^h_{i-1}(k))
\end{aligned}
\end{align*}
Now, we can choose $h$ sufficiently small so that $1 - Kh >0$. Then, according to \eqref{D13}, we have 
\[ \hat{\theta}^h_{i}(k+1) - \hat{\theta}^h_{i-1}(k+1) >0, \qquad i = 2,3, \ldots, N.\]
Therefore, it follows by induction principle that the order \eqref{D12} holds for each step $n$ and we finish the proof of our claim.\newline

\noindent $\bullet$ (Step $2$): In this step, we claim that $D(\hat{\Theta}^h_s(n)) < \varepsilon$  holds for all step $n$. We will prove our claim by contradiction. Actually, suppose the inequality $D(\hat{\Theta}^h_s(n)) < \varepsilon$ does not hold for all $n$. Then, there exists the ``stoping step" $n_0$ such that 
\begin{equation}\label{D16}
\left\{
\begin{aligned}
&D(\hat{\Theta}^h_s(n)) < \varepsilon\ll 1, \quad 0 \le n \le n_0,\\
&D(\hat{\Theta}^h_s(n_0+1))  \ge \varepsilon.
\end{aligned}
\right.
\end{equation} 
Using the same argument as in step $1$, we can immediately conclude that the order \eqref{D12} is preserved for each step $0 \leq n \leq n_0 + 1$, i.e.,
\begin{equation}\label{D17}
\hat{\theta}^h_{i}(n) > \hat{\theta}^h_{i-1}(n), \qquad i = 2,3, \cdots, N, \quad 0 \leq n \leq n_0 + 1.
\end{equation}
Then, according to \eqref{D17}, the diameter can be represented as $D(\hat{\Theta}^h_s(n))=\hat{\theta}^h_{N}(n) - \hat{\theta}^h_{1}(n)$ for all steps $0 \leq n \leq n_0 + 1$. Therefore, the phase diameter of the $(n_0 +1)$-th step can be obtained as below,
\begin{align}\label{D18}
\begin{aligned}
 &D(\hat{\Theta}^h_s(n_0 + 1))= \hat{\theta}^h_{N}(n_0 +1) - \hat{\theta}^h_{1}(n_0 +1) \\
&\hspace{0.5cm}= \hat{\theta}^h_{N}(n_0) - \hat{\theta}^h_{1}(n_0) + \frac{Kh}{N} \sum^{N}_{j=1} \sin(\hat{\theta}^h_{j}(n_0) - \hat{\theta}^h_{N}(n_0))  - \frac{Kh}{N} \sum^{N}_{j=1} \sin(\hat{\theta}^h_{j}(n_0) - \hat{\theta}^h_{1}(n_0)).
\end{aligned}
\end{align}
Then, due to \eqref{D16} and \eqref{D17}, we can apply the same argument in step $1$ and Lemma \ref{sub-additive} to obtain that 
\begin{equation}\label{D19}
\sum^{N}_{j=1} \sin(\hat{\theta}^h_{j}(n_0) - \hat{\theta}^h_{N}(n_0)) - \sum^{N}_{j=1} \sin(\hat{\theta}^h_{j}(n_0) - \hat{\theta}^h_{1}(n_0))\leq -N\sin(\hat{\theta}^h_{N}(n_0) - \hat{\theta}^h_{1}(n_0))<0.
\end{equation}
Therefore, we combine \eqref{D18} and \eqref{D19} to obtain that 
\[\hat{\theta}^h_{N}(n_0 +1) - \hat{\theta}^h_{1}(n_0 +1) < \hat{\theta}^h_{N}(n_0) - \hat{\theta}^h_{1}(n_0) < \varepsilon,\]
which is contradicted to $\eqref{D16}_2$. Therefore, we finish the proof of our claim and conclude that $D(\hat{\Theta}^h_s(n)) < \varepsilon$  holds for all step $n$.
\end{proof}
Combining Lemma \ref{L4.1} and Lemma \ref{L4.2}, we know that, for all initial data $\Theta_0\in \mathcal{A}_1$, the diameter of effective phase and the corresponding order will be invariant after a particular time $l$. Next, we will study the asymptotic synchronization behaviors of oscillators in $\mathcal{I}_s = \{1,2,\ldots,N\}$ for discrete system. The following result states that the convergence to zero of effective phase diameter is at least exponential, which implies that the effective phases of all oscillators in $\mathcal{I}_s$ will converge to zero at least exponentially.

\begin{lemma}\label{L4.3}
For $N\geq 3$, we let $\hat{\Theta}^h = (\hat{\theta}^h_1(n), \ldots, \hat{\theta}^h_N(n))$ be a solution to the discrete identical Kuramoto model \eqref{hat system} with the initial configuration satisfying $D(\hat{\Theta}^h_s(0)) < \varepsilon$, where $\varepsilon$ is a sufficiently small positive real number. Then, there exists a positive number $h_0 > 0$ such that if $0<h<h_0$, we have 
\begin{equation}\label{D20}
\left\{
\begin{aligned}
&D(\hat{\Theta}^h_s(n)) < D(\hat{\Theta}^h_s(0)) \exp \left(-\frac{K\sin \varepsilon}{2\varepsilon} nh\right), \\
& |\hat{\theta}^h_j(n)| < D(\hat{\Theta}^h_s(0)) \exp \left(-\frac{K\sin \varepsilon}{2\varepsilon} nh\right), \qquad n = 0,1,2,\ldots, \qquad j \in \mathcal{I}_s.
\end{aligned}
\right.
\end{equation}
\end{lemma}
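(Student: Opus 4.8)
The plan is to derive a discrete Grönwall-type inequality for the effective phase diameter $D(\hat{\Theta}^h_s(n))$ and then transfer the resulting decay to each individual effective phase via the zero-average condition. By Lemma \ref{L4.2}, under the hypothesis $D(\hat{\Theta}^h_s(0))<\varepsilon$ and $h$ small, the order of the oscillators is preserved for all $n$, so throughout the argument we may write $D(\hat{\Theta}^h_s(n))=\hat{\theta}^h_N(n)-\hat{\theta}^h_1(n)$ and also $D(\hat{\Theta}^h_s(n))<\varepsilon$ for all $n$. First I would reproduce the computation \eqref{D18}--\eqref{D19} at a general step $n$ rather than at the stopping step $n_0$: writing out
\[
D(\hat{\Theta}^h_s(n+1)) = D(\hat{\Theta}^h_s(n)) + \frac{Kh}{N}\sum_{j=1}^N\Big[\sin(\hat{\theta}^h_j(n)-\hat{\theta}^h_N(n)) - \sin(\hat{\theta}^h_j(n)-\hat{\theta}^h_1(n))\Big],
\]
and applying Lemma \ref{sub-additive} exactly as in \eqref{D19} gives the upper bound
\[
D(\hat{\Theta}^h_s(n+1)) \le D(\hat{\Theta}^h_s(n)) - Kh\sin\big(D(\hat{\Theta}^h_s(n))\big).
\]

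Next, since $0<D(\hat{\Theta}^h_s(n))<\varepsilon$ and $\frac{\sin x}{x}$ is decreasing on $(0,\pi)$, we have $\sin(D(\hat{\Theta}^h_s(n))) \ge \frac{\sin\varepsilon}{\varepsilon}D(\hat{\Theta}^h_s(n))$, hence
\[
D(\hat{\Theta}^h_s(n+1)) \le \Big(1 - \frac{Kh\sin\varepsilon}{\varepsilon}\Big)D(\hat{\Theta}^h_s(n)).
\]
Choosing $h_0$ so that $\frac{Kh\sin\varepsilon}{\varepsilon}<1$ for $0<h<h_0$ (and also small enough that Lemma \ref{L4.2} applies), iteration yields $D(\hat{\Theta}^h_s(n)) \le (1-\frac{Kh\sin\varepsilon}{\varepsilon})^n D(\hat{\Theta}^h_s(0))$. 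To reach the stated form with the factor $\frac{1}{2}$ in the exponent, I would use the elementary inequality $1-x \le e^{-x}$ together with the fact that, after possibly shrinking $h_0$, one can absorb the discrepancy between $e^{-Kh\sin\varepsilon/\varepsilon}$ and the desired $e^{-(K\sin\varepsilon/2\varepsilon)h}$ — more precisely, $1 - \frac{Kh\sin\varepsilon}{\varepsilon} \le e^{-\frac{K\sin\varepsilon}{2\varepsilon}h}$ holds for all sufficiently small $h$ since the left side is $1 - \frac{Kh\sin\varepsilon}{\varepsilon} + O(h)$ while the right side is $1 - \frac{K\sin\varepsilon}{2\varepsilon}h + O(h^2)$; this is where $h_0$ gets its final value. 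This establishes the first inequality in \eqref{D20}.

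For the second inequality, I would invoke the conservation law $\sum_{i=1}^N \hat{\theta}^h_i(n) = 0$ for all $n$, which follows from summing \eqref{hat system} over $i$ and using the antisymmetry of $\sin$. Since the effective phases have zero average and the order is preserved, we have $\hat{\theta}^h_1(n) \le 0 \le \hat{\theta}^h_N(n)$, so for every $j\in\mathcal{I}_s$, $|\hat{\theta}^h_j(n)| \le \hat{\theta}^h_N(n) - \hat{\theta}^h_1(n) = D(\hat{\Theta}^h_s(n))$, and the bound on the diameter transfers verbatim. The main obstacle here is essentially bookkeeping rather than conceptual: one must make sure the single choice of $h_0$ simultaneously (i) keeps the dynamics order-preserving and diameter-nonincreasing as required by Lemma \ref{L4.2}, (ii) keeps the contraction factor $1 - \frac{Kh\sin\varepsilon}{\varepsilon}$ in $(0,1)$, and (iii) is small enough for the exponential comparison $1 - \frac{Kh\sin\varepsilon}{\varepsilon} \le e^{-\frac{K\sin\varepsilon}{2\varepsilon}h}$; none of these is deep, but the constant $\frac{1}{2}$ in the statement is precisely the slack that makes (iii) free once $h_0$ is small.
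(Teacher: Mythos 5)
Your proposal is correct and follows essentially the same route as the paper: the contraction estimate $D(\hat{\Theta}^h_s(n+1))\le(1-Kh\tfrac{\sin\varepsilon}{\varepsilon})D(\hat{\Theta}^h_s(n))$ via Lemma \ref{sub-additive} and the monotonicity of $\tfrac{\sin x}{x}$, the absorption of the factor $\tfrac12$ in the exponent by shrinking $h_0$ (the paper phrases this through L'Hospital applied to $\log(1-Kh\tfrac{\sin\varepsilon}{\varepsilon})/h$, you through a Taylor comparison, which is equivalent), and the transfer to individual phases by the zero-mean property. Your handling of the last step (using $\hat{\theta}^h_1(n)\le 0\le\hat{\theta}^h_N(n)$ instead of averaging over all oscillators) is a cosmetic variation of the paper's argument.
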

\begin{proof}
We can apply Lemma \ref{sub-additive} and the same argument as in the proof of Lemma \ref{L4.2} to obtain that
\begin{equation}\label{D21}
D(\hat{\Theta}^h_s(n +1)) = \hat{\theta}^h_{N}(n+1) - \hat{\theta}^h_{1}(n +1) \le \hat{\theta}^h_{N}(n) - \hat{\theta}^h_{1}(n) - Kh\sin(\hat{\theta}^h_{N}(n) - \hat{\theta}^h_{1}(n)).
\end{equation}
Since the function $\frac{\sin x}{x} $ is monotonically decreasing in $[0, \varepsilon]$ when $\varepsilon$ is sufficiently small, we can apply Lemma \ref{L4.2} to obtain that 
\begin{equation}\label{D22}
 D(\hat{\Theta}^h_s(n))= \hat{\theta}^h_{N}(n) - \hat{\theta}^h_{1}(n)< \varepsilon,\quad \frac{\sin(\hat{\theta}^h_{N}(n) - \hat{\theta}^h_{1}(n))}{\hat{\theta}^h_{N}(n) - \hat{\theta}^h_{1}(n)}  > \frac{\sin \varepsilon}{\varepsilon}.
 \end{equation}
Hence, we combine \eqref{D21} and \eqref{D22} to obtain 
\begin{equation}\label{D23}
\hat{\theta}^h_{N}(n+1) - \hat{\theta}^h_{1}(n +1) < (1 - Kh \frac{\sin \varepsilon}{\varepsilon})(\hat{\theta}^h_{N}(n) - \hat{\theta}^h_{1}(n)).
\end{equation}
Then, the iteration of \eqref{D23} leads to the estimate of the diameter of effective phases at step $n$ as below,
\begin{equation}\label{D24}
D(\hat{\Theta}^h_s(n))  < \left(1 - Kh \frac{\sin \varepsilon}{\varepsilon}\right)^n (\hat{\theta}^h_{N}(0) - \hat{\theta}^h_{1}(0)) = D(\hat{\Theta}^h_s(0)) \exp \left[ nh \frac{\log(1 - Kh \frac{\sin \varepsilon}{\varepsilon})}{h}\right].
\end{equation}
Now, we can choose the step size sufficiently small to guarantee $\left(1 - Kh \frac{\sin \varepsilon}{\varepsilon}\right)>0$, so that the last term in \eqref{D24} is well defined. Moreover, according to L'Hospital's rule, we have 
\begin{equation}\label{D25}
\lim_{h \to 0} \frac{\log(1 - Kh \frac{\sin \varepsilon}{\varepsilon})}{h} = -K \frac{\sin \varepsilon}{\varepsilon}.
\end{equation}
The estimates \eqref{D24} and \eqref{D25} implies that there exists a positive constant $h_0$ such that if $0<h<h_0$, we have
\begin{equation}\label{D26}
\frac{\log(1 - Kh \frac{\sin \varepsilon}{\varepsilon})}{h} < -\frac{K\sin \varepsilon}{2\varepsilon},\quad D(\hat{\Theta}^h_s(n)) < D(\hat{\Theta}^h_s(0)) \exp \left(-\frac{K\sin \varepsilon}{2\varepsilon} nh\right), \quad n\geq 0.
\end{equation}
Moreover, from \eqref{hat system} and the fact $\sum\limits_{j=1}^{N} \hat{\theta}^h_j(0) = 0$, we immediately obtain that  $\sum\limits_{j=1}^{N} \hat{\theta}^h_j(n) = 0$ holds for all steps $n$. Therefore, \eqref{D26} implies that 
\[	|\hat{\theta}^h_j(n)| = \left| \hat{\theta}^h_j(n) - \frac{\sum_{i =1}^N \hat{\theta}^h_i(n)}{N} \right| \le \frac{\sum_{i =1}^N |\hat{\theta}^h_j(n) - \hat{\theta}^h_i(n)|}{N} \le D(\hat{\Theta}^h_s(n)), \]
which finish the proof of the lemma.
\end{proof}

\begin{remark}\label{R4.2}
For initial data $\Theta_0\in\mathcal{A}_1$, we can construct $\phi_0^*$ as in \eqref{D2} based on the asymptotical behavior of continuous time model. Then the relationship between $\hat{\theta}^h_i(n)$ and $\theta_i^h(n)$ can be written as below
\[\hat{\theta}^h_i(n) = \theta_i^h(n) - 2k_i \pi - \phi_0^*, \quad i \in \mathcal{I}_s.\]
Note $k_i$ and $\phi_0^*$ satisfying \eqref{D2} i.e. $\lim\limits_{t\rightarrow+\infty}|\theta_i(t)-2k_i \pi - \phi_0^*|=0$, where $\theta_i(t)$ is the solution to the continuous model with initial data $\Theta_0\in\mathcal{A}_1$. Therefore, Lemma \ref{L4.1}, Lemma \ref{L4.2}, Lemma \ref{L4.3} and \eqref{D2} together show that, for initial data $\Theta_0\in\mathcal{A}_1$, the solution to the discrete identical Kuramoto model and the solution to the continuous Kuramoto model have the same asymptotical limits, i.e.
\[\lim_{n\rightarrow+\infty} |\theta_i^h(n) - 2k_i \pi - \phi_0^*|=\lim\limits_{t\rightarrow+\infty}|\theta_i(t)-2k_i \pi - \phi_0^*|=0, \quad i \in \mathcal{I}_s.\]\newline
\end{remark}

\subsection{Case $\mathcal{A}_2$ (bipolar formation)} \label{sec:4.2}
For initial data $\Theta_0\in\mathcal{A}_2$, we have $|\mathcal{I}_b| = 1$ and $|\mathcal{I}_s| = N - 1$. Without loss of generality, we suppose that
\[ \mathcal{I}_s = \{1,2,\ldots, N-1\}, \quad \mathcal{I}_b = \{N\}.\]
From \eqref{D3}, for any given positive constant $\varepsilon\ll 1$, we can find a large enough time $T_\varepsilon > 0$ such that all oscillators are closed to their corresponding limit, i.e.
\begin{align}\label{D27}
\left\{\begin{aligned}
&\lim_{t \to + \infty} \phi(t) =  -\frac{1}{N} [\sum_{j=1}^{N-1} 2k_j \pi + (2k_N + 1) \pi] := \phi_1^*\\
& |\theta_N(T_\varepsilon) - (2k_N+1) \pi - \phi_1^*|<\frac{\varepsilon}{4},\quad N\in \mathcal{I}_b,\\
&|\theta_j(T_\varepsilon) - 2k_j \pi - \phi_1^*|<\frac{\varepsilon}{4},\quad  j \in\mathcal{I}_s.
\end{aligned}
\right.
\end{align}	
Similar as \eqref{D5}, for the discrete-time Kuramoto model, we can define the effective phase for oscillators in $\mathcal{I}_s$	 and $\mathcal{I}_b$ respectively. More precisely, we let 
\begin{equation}\label{D28}
\hat{\theta}^h_i(n) = \theta_i^h(n) - 2k_i\pi - \phi_1^* - \frac{\pi}{N}, \quad n=0,1,2,\cdots\quad i = 1,2,\ldots,N.
\end{equation}
According to \eqref{D27} and \eqref{D28}, it is obvious that $\hat{\theta}^h_i(n)$ satisfy the identical Kuramoto model \eqref{hat system}. Then, similar as Lemma \ref{L4.1}, we can prove that the solution of the discrete Kuramoto model with initial data $\Theta_0\in\mathcal{A}_2$ is closed to the solution of the continuous model with same initial data. More precisely, we define the effective phases for the oscillators in the synchronization group $\mathcal{I}_s$ as follows,  
\begin{align}\label{D29}
\left\{\begin{aligned}
&\hat{\Theta}^h_s := (\hat{\theta}^h_{i_1},\ldots, \hat{\theta}^h_{i_{|\mathcal{I}_s|}}), \quad i_k \in \mathcal{I}_s,\quad \hat{\theta}^h_M := \max_{j \in \mathcal{I}_s} \hat{\theta}^h_j, \quad \hat{\theta}^h_m := \min_{j \in \mathcal{I}_s} \hat{\theta}^h_j, \\
&D(\hat{\Theta}^h_s) := \max_{i,j \in \mathcal{I}_s} |\hat{\theta}^h_i - \hat{\theta}^h_j|= \hat{\theta}^h_M - \hat{\theta}^h_m.
\end{aligned}
\right.
\end{align}
Then, we can apply, \eqref{D27}, \eqref{D28}. \eqref{D29} and  Lemma \ref{L2.5} to have the following lemma without proof.
\begin{lemma}\label{L4.4}
For $N\geq 3$, we let $\Theta^h(n)=(\theta^h_1(n), \theta^h_2(n), \ldots, \theta^h_N(n))$ be a solution to the discrete identical Kuramoto model \eqref{A-2} with initial data $\Theta_0\in\mathcal{A}_2$. Then for any given $\varepsilon > 0$ and sufficiently small step size $h\ll 1$, we can find a positive integer $l$ such that
\[|\hat{\theta}^h_N(l)-\frac{N-1}{N}\pi|<\frac{\varepsilon}{2},\quad D(\hat{\Theta}^h_s(l)) < \varepsilon,\quad |\hat{\theta}^h_i(l)+\frac{1}{N}\pi|<\frac{\varepsilon}{2},\quad i=1,\cdots, N-1,\] 
where $N\in\mathcal{I}_b$, $\mathcal{A}_2$ and $\hat{\Theta}^h_s$ are defined in \eqref{D1} and \eqref{D29} respectively.\newline
\end{lemma}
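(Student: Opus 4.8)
The plan is to mimic the proof of Lemma \ref{L4.1}, the only genuinely new ingredients being the presence of the single bipolar oscillator $N\in\mathcal{I}_b$ and the extra shift $-\pi/N$ built into the effective phase \eqref{D28}. First I would invoke \eqref{D3} (equivalently the first three lines of \eqref{D27}): for the fixed initial datum $\Theta_0\in\mathcal{A}_2$ and the prescribed $\varepsilon$, the continuous identical Kuramoto flow $\theta_i(t)$ issuing from $\Theta_0$ converges to its limits, so there is a time $T_\varepsilon>0$ with
\[|\theta_N(T_\varepsilon)-(2k_N+1)\pi-\phi_1^*|<\tfrac{\varepsilon}{4},\qquad |\theta_j(T_\varepsilon)-2k_j\pi-\phi_1^*|<\tfrac{\varepsilon}{4}\quad(j\in\mathcal{I}_s).\]

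Next I would apply the Euler-scheme error bound, Lemma \ref{L2.5}, on the compact interval $[0,T_\varepsilon]$. The forcing field $f(\Theta)=\bigl(\tfrac{K}{N}\sum_j\sin(\theta_j-\theta_i)\bigr)_i$ is globally Lipschitz (its Jacobian is bounded by $2K$) and globally bounded (by $K$ in each component), so $|\theta^h(n+1)-\theta^h(n)|\le Kh$ forces $\|\theta^h(n)-\Theta_0\|\le KT_\varepsilon$ for $0\le n\le T_\varepsilon/h$, and likewise the continuous trajectory stays in a fixed ball on $[0,T_\varepsilon]$; thus the confinement hypothesis of Lemma \ref{L2.5} holds with an $R$ and a Lipschitz constant that are independent of $i$. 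Consequently there are constants $M,L_M>0$, independent of $i$, with $|\theta_i(nh)-\theta_i^h(n)|\le Me^{L_MT_\varepsilon}h$ for $0\le n\le T_\varepsilon/h$. Choosing $h$ small enough that $Me^{L_MT_\varepsilon}h<\varepsilon/4$ and that $l:=T_\varepsilon/h\in\bbz^+$, and combining with the previous display by the triangle inequality, I get at $n=l$ (so $lh=T_\varepsilon$)
\[|\theta_N^h(l)-(2k_N+1)\pi-\phi_1^*|<\tfrac{\varepsilon}{2},\qquad |\theta_j^h(l)-2k_j\pi-\phi_1^*|<\tfrac{\varepsilon}{2}\quad(j\in\mathcal{I}_s).\]

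Finally I would translate these bounds into the effective phase \eqref{D28}, $\hat{\theta}^h_i(n)=\theta_i^h(n)-2k_i\pi-\phi_1^*-\tfrac{\pi}{N}$. For $j\in\mathcal{I}_s$ this gives $|\hat{\theta}^h_j(l)+\tfrac{\pi}{N}|<\varepsilon/2$, hence $D(\hat{\Theta}^h_s(l))=\max_{i,j\in\mathcal{I}_s}|\hat{\theta}^h_i(l)-\hat{\theta}^h_j(l)|<\varepsilon$; for the bipolar index the constant $(2k_N+1)\pi-2k_N\pi-\tfrac{\pi}{N}=\pi-\tfrac{\pi}{N}=\tfrac{N-1}{N}\pi$ yields $|\hat{\theta}^h_N(l)-\tfrac{N-1}{N}\pi|<\varepsilon/2$, which is the assertion. (As a side remark justifying \eqref{D28}, the shift $-\pi/N$ is precisely what makes $\sum_{i=1}^N\hat{\theta}^h_i(n)\equiv0$: one uses $\sum_i\theta_i^h(n)\equiv0$, which holds because the coupling in \eqref{A-2} is antisymmetric, together with the formula for $\phi_1^*$ in \eqref{D3}, so that $\hat{\Theta}^h$ indeed solves \eqref{hat system} with the stated normalization.) The argument is essentially routine once Lemma \ref{L4.1} is available; the only point demanding a little care is verifying that Lemma \ref{L2.5} can be applied with error constants uniform in $i$, since it is exactly this uniformity that lets a single time $T_\varepsilon$ and a single step $l$ serve all $N$ oscillators simultaneously.
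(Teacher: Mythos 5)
Your argument is correct and coincides with the paper's intended one: the paper states Lemma \ref{L4.4} "without proof" precisely because it follows by repeating the two-step proof of Lemma \ref{L4.1} (continuous-time convergence \eqref{D27} on a finite horizon $T_\varepsilon$, then the Euler global-error bound of Lemma \ref{L2.5} with $h$ small and $l=T_\varepsilon/h$), translated through the shifted effective phase \eqref{D28}. Your bookkeeping of the shift $-\pi/N$ for both the synchronized indices and the bipolar index $N$ matches the stated conclusions, so nothing further is needed.
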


\subsubsection{(The synchronization group $\mathcal{I}_s$)}

We first study the oscillators in the set $\mathcal{I}_s$. 
Then, similar as in Case $\mathcal{A}_1$, we can set $l$ as the initial step and study the large time behavior after $l$. In fact, we have the following result which is almost the same as Lemma \ref{L4.2}.
\begin{lemma}\label{L4.5} 
For $N\geq 3$, we let $\hat{\Theta}^h(n) = (\hat{\theta}^h_1(n), \ldots, \hat{\theta}^h_N(n))$ be a solution to the discrete identical Kuramoto model \eqref{hat system} and $\hat{\Theta}^h_s=(\hat{\theta}^h_{1},\ldots, \hat{\theta}^h_{N-1})$. Moreover, we let the initial configuration satisfy the following properties,
\begin{equation}\label{D30}
 D(\hat{\Theta}^h_s(0)) = \hat{\theta}^h_{N-1}(0) - \hat{\theta}^h_1(0) < \varepsilon,\quad \hat{\theta}^h_{N-1}(0) > \hat{\theta}^h_{N-2}(0) > \cdots > \hat{\theta}^h_1(0),
 \end{equation}
where $\varepsilon$ is a sufficient small positive constant. Then we conclude that the effective phase diameter of $\hat{\Theta}^h_s$ is uniform bounded by the same $\varepsilon$ in \eqref{D30} and the order of the effective phases in $\hat{\Theta}^h_s$ will be preserved for all $n$ i.e.
\begin{align*}
\left\{\begin{aligned}
&D(\hat{\Theta}^h_s(n)) < \varepsilon, \qquad n = 0, 1,2,\ldots,\\
&\hat{\theta}^h_{N-1}(n) > \cdots > \hat{\theta}^h_1(n), \qquad n = 0, 1,2,\ldots.
\end{aligned}
\right.
\end{align*}
\end{lemma}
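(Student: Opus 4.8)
The plan is to follow, essentially verbatim, the two-step scheme used in the proof of Lemma \ref{L4.2}, the only new feature being the extra oscillator $\hat{\theta}^h_N$, which is \emph{not} assumed to lie inside the cluster $\mathcal{I}_s=\{1,\ldots,N-1\}$ and whose position I will not attempt to control. Recall that $\hat{\Theta}^h(n)$ solves \eqref{hat system}. First, under the standing assumption that $D(\hat{\Theta}^h_s(n))<\varepsilon$ for every $n$, I would prove by induction that the ordering $\hat{\theta}^h_{N-1}(n)>\cdots>\hat{\theta}^h_1(n)$ propagates; then I would discharge the standing assumption by a ``stopping step'' contradiction argument. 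Throughout one uses that $\varepsilon$ is small enough that $x\mapsto \sin x/x$ is positive and decreasing on $(0,\varepsilon]$ and that $\sin$ keeps the sign of its argument there, and that $h$ is small enough that $1-Kh>0$.

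For the order-propagation step, fix adjacent indices $i-1,i\in\mathcal{I}_s$ and, exactly as in \eqref{D14}, write $\hat{\theta}^h_i(k+1)-\hat{\theta}^h_{i-1}(k+1)$ as $\hat{\theta}^h_i(k)-\hat{\theta}^h_{i-1}(k)$ plus $\frac{Kh}{N}\sum_{j=1}^N\big[\sin(\hat{\theta}^h_j(k)-\hat{\theta}^h_i(k))-\sin(\hat{\theta}^h_j(k)-\hat{\theta}^h_{i-1}(k))\big]$, and split the sum into $j\in\mathcal{I}_s$ and $j=N$. For the $j\in\mathcal{I}_s$ part one repeats the argument of Lemma \ref{L4.2} verbatim — the relevant differences have equal signs and modulus $<\varepsilon$, so Lemma \ref{sub-additive} applies to $\sin$ on $[0,\varepsilon]$ — getting the lower bound $-\frac{Kh(N-1)}{N}\sin\!\big(\hat{\theta}^h_i(k)-\hat{\theta}^h_{i-1}(k)\big)$ for this part (the term count is $N-1$, not $N$, one oscillator being absent). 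For the single term $j=N$ I would use only that $\sin$ is $1$-Lipschitz, so this term is bounded below by $-\big(\hat{\theta}^h_i(k)-\hat{\theta}^h_{i-1}(k)\big)$, with no control on $\hat{\theta}^h_N(k)$ required. Adding the two bounds and using $\sin x<x$ gives $\hat{\theta}^h_i(k+1)-\hat{\theta}^h_{i-1}(k+1)\ge(1-Kh)\big(\hat{\theta}^h_i(k)-\hat{\theta}^h_{i-1}(k)\big)>0$, which closes the induction.

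For the diameter step I argue by contradiction: if $D(\hat{\Theta}^h_s(\cdot))<\varepsilon$ fails, let $n_0$ be the first index with $D(\hat{\Theta}^h_s(n_0+1))\ge\varepsilon$. Since the induction of the previous step only uses $D(\hat{\Theta}^h_s(n))<\varepsilon$ to pass from step $n$ to $n+1$, the ordering holds on $0\le n\le n_0+1$, so there $D(\hat{\Theta}^h_s(n))=\hat{\theta}^h_{N-1}(n)-\hat{\theta}^h_1(n)$ (compare \eqref{D17}). Expanding $D(\hat{\Theta}^h_s(n_0+1))$ as in \eqref{D18}, the $j\in\mathcal{I}_s$ part of the increment is at most $-\frac{Kh(N-1)}{N}\sin D(\hat{\Theta}^h_s(n_0))$ by Lemma \ref{sub-additive} (this is exactly \eqref{D19} with $N$ replaced by $N-1$), while the $j=N$ part is at most $\frac{Kh}{N}D(\hat{\Theta}^h_s(n_0))$ by the $1$-Lipschitz bound. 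Hence, using $\sin D(\hat{\Theta}^h_s(n_0))\ge\tfrac{\sin\varepsilon}{\varepsilon}D(\hat{\Theta}^h_s(n_0))$, one gets $D(\hat{\Theta}^h_s(n_0+1))\le D(\hat{\Theta}^h_s(n_0))\big(1-\frac{Kh}{N}\big[(N-1)\tfrac{\sin\varepsilon}{\varepsilon}-1\big]\big)$. Since $N\ge3$ forces $(N-1)\tfrac{\sin\varepsilon}{\varepsilon}>1$ for $\varepsilon$ small, the factor is strictly less than $1$ (and positive for $h$ small), so $D(\hat{\Theta}^h_s(n_0+1))<D(\hat{\Theta}^h_s(n_0))<\varepsilon$, a contradiction. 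Thus $D(\hat{\Theta}^h_s(n))<\varepsilon$ for all $n$, and order preservation then follows unconditionally from the first step.

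The main obstacle — indeed the only real difference from Lemma \ref{L4.2} — is the ``foreign'' oscillator $\hat{\theta}^h_N$: it is roughly antipodal to the cluster, but its exact location is deliberately left uncontrolled here (it is treated in the subsequent analysis of $\mathcal{I}_b$), so the sign of $\sin(\hat{\theta}^h_N-\hat{\theta}^h_i)$ is unavailable. What rescues the argument is that this single oscillator perturbs the cluster's internal diameter by an increment of modulus at most $\frac{Kh}{N}D(\hat{\Theta}^h_s(n))$ — a crude $1$-Lipschitz estimate, valid wherever $\hat{\theta}^h_N$ sits — which is strictly dominated by the contractive contribution $\frac{Kh(N-1)}{N}\tfrac{\sin\varepsilon}{\varepsilon}D(\hat{\Theta}^h_s(n))$ of the $N-1\ge2$ oscillators inside the cluster; this is precisely why the hypothesis $N\ge3$ is needed. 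Everything else is a bookkeeping adaptation of \eqref{D14}–\eqref{D19}, which I would not spell out in detail.
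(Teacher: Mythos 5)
Your proposal is correct and follows essentially the same route as the paper: the same two-step scheme (order propagation by induction under the diameter hypothesis, then a stopping-step contradiction for the diameter), with the interaction sum split into the $N-1$ cluster terms handled by Lemma \ref{sub-additive} and the lone $j=N$ term handled by a crude bound, leading to exactly the paper's contraction factor $1-\frac{Kh}{N}\bigl[(N-1)\tfrac{\sin\varepsilon}{\varepsilon}-1\bigr]$ and the same use of $N\ge 3$. The only cosmetic difference is that you invoke the $1$-Lipschitz property of $\sin$ for the $j=N$ term where the paper phrases it via the mean value theorem with $|\cos\theta^*_{i,i-1}|\le 1$, which is the same estimate.
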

\begin{proof}
\noindent $\bullet$ (Step$1$): If $D(\hat{\Theta}^h_s(n)) < \varepsilon$  holds for all steps $n$, the proof of the first part of Lemma \ref{L4.5} is done. Then we will prove by induction that the order of oscillators in $\hat{\Theta}^h_s$ will be preserved for all steps $n$. In fact, for the initial step, we have
\[\hat{\theta}^h_{N-1}(0) > \hat{\theta}^h_{N-2}(0) > \cdots > \hat{\theta}^h_{1}(0).\]
Now we assume that the order is preserved for step $k$, then we claim that the order will be preserved for step $k+1$. Actually, we can estimate the difference of $\hat{\theta}^h_i(k+1)$ and $\hat{\theta}^h_{i-1}(k+1)$, where $i =2,3, \cdots, N-1$, as follows, 
\begin{align}
\begin{aligned}\label{D31}
&\hat{\theta}^h_{i}(k+1) -\hat{\theta}^h_{i-1}(k+1)\\
& = \hat{\theta}^h_{i}(k) -\hat{\theta}^h_{i-1}(k) + \frac{Kh}{N} \sum^{N}_{j=1} \Big(\sin(\hat{\theta}^h_j(k) - \hat{\theta}^h_i(k))  - \sin(\hat{\theta}^h_j(k) - \hat{\theta}^h_{i-1}(k))\Big).
\end{aligned}
\end{align}
Then, we apply Lemma \ref{sub-additive} and the mean value theorem to estimate the summation part of \eqref{D31} as below,
\begin{align}
\begin{aligned}\label{D32}
& \sum^{N}_{j=1} [\sin(\hat{\theta}^h_j(k) - \hat{\theta}^h_i(k)) - \sin(\hat{\theta}^h_j(k) - \hat{\theta}^h_{i-1}(k))] \\
&\hspace{0.5cm}= \left[ \sin(\hat{\theta}^h_N(k) - \hat{\theta}^h_{i}(k)) -  \sin(\hat{\theta}^h_N(k) - \hat{\theta}^h_{i-1}(k)) \right] \\
&\hspace{0.5cm}+ \sum^{N-1}_{j=1} \left[ \sin(\hat{\theta}^h_j(k) - \hat{\theta}^h_i(k)) - \sin(\hat{\theta}_{j}(k) - \hat{\theta}^h_{i-1}(k)) \right] \\
&\hspace{0.5cm}\ge - (N - 1) \sin(\hat{\theta}^h_i(k) - \hat{\theta}^h_{i-1}(k)) - \cos \hat{\theta}_{i,i-1}^* (\hat{\theta}^h_{i}(k) - \hat{\theta}^h_{i-1}(k)) \\
&\hspace{0.5cm}\ge - N(\hat{\theta}^h_i(k) - \hat{\theta}^h_{i-1}(k)),
\end{aligned}
\end{align}	
where the value of $\theta_{i,i-1}^*$ is a constant between $\hat{\theta}^h_N(k) - \hat{\theta}^h_{i}(k)$ and $\hat{\theta}^h_N(k) - \hat{\theta}^h_{i-1}(k)$, and the last inequality holds because the order of oscillators at step $k$ is preserved. Thus, we combine the above estimates \eqref{D31}, \eqref{D32} and let $h$ sufficiently small to obtain
\begin{equation}\label{D33}
\hat{\theta}^h_{i}(k+1) - \hat{\theta}^h_{i-1}(k+1)\ge  (1 - Kh)(\hat{\theta}^h_{i}(k) -\hat{\theta}^h_{i-1}(k))>0.
\end{equation}
Therefore, it follows by induction that the order of the oscillators in $\hat{\Theta}^h_s$ is preserved for each step $n$.\newline
	
\noindent $\bullet$ (Step$2$): In step $1$, we show the order is preserved if the diameter is uniformly small. In this step, we will prove by contradiction that $D(\hat{\Theta}^h_s(n))<\varepsilon$ for all $n$, and then the proof in step $1$ is also closed. In fact, we assume $D(\hat{\Theta}^h_s(n)) < \varepsilon$  does not hold for all step $n$. Then, same as in Lemma \ref{L4.2}, there exists a step $n_0$ such that,
\begin{equation}\label{D34}
\left\{
\begin{aligned}
&D(\hat{\Theta}^h_s(n)) < \varepsilon\ll 1, \quad 0 \le n \le n_0,\\
&D(\hat{\Theta}^h_s(n_0+1))  \ge \varepsilon.
\end{aligned}
\right.
\end{equation} 
Using the same argument as in Step$1$, we can obtain that, for each step $0 \leq n \leq n_0 + 1$, the order is preserved, i.e.,
\begin{equation}\label{D35}
\hat{\theta}^h_{i}(n) > \hat{\theta}^h_{i-1}(n), \qquad i = 2,3, \cdots, N-1, \quad 0 \leq n \leq n_0 + 1.
\end{equation}
Now, according to the discrete iteration scheme, we can express the phase diameter $D(\hat{\Theta}^h_s(n))$ at $(n_0 +1)$-th step as below,
\begin{align}
\begin{aligned}\label{D36}
 D(\hat{\Theta}^h_s(n_0 + 1)) &= \hat{\theta}^h_{N-1}(n_0) - \hat{\theta}^h_{1}(n_0) \\
&+ \frac{Kh}{N} \sum^{N}_{j=1} \sin(\hat{\theta}^h_{j}(n_0) - \hat{\theta}^h_{N-1}(n_0))  - \frac{Kh}{N} \sum^{N}_{j=1} \sin(\hat{\theta}^h_{j}(n_0) - \hat{\theta}^h_{1}(n_0)).
\end{aligned}
\end{align}
According to \eqref{D34} and \eqref{D35}, the terms $(\hat{\theta}^h_{j}(n_0) - \hat{\theta}^h_{N-1}(n_0))$ is negative and closed to zero, while $(\hat{\theta}^h_{j}(n_0) - \hat{\theta}^h_{1}(n_0))$ are positive and closed to zero. Therefore, we can apply Lemma \ref{sub-additive} and mean value theorem to obtain,
\begin{align}
\begin{aligned}\label{D37}
&\sum_{j=1}^{N} [\sin(\hat{\theta}^h_j(n_0) - \hat{\theta}^h_{N-1}(n_0)) - \sin(\hat{\theta}^h_j(n_0) - \hat{\theta}^h_{1}(n_0))] \\
&= \sum_{j=1}^{N-1} [\sin(\hat{\theta}^h_j(n_0) - \hat{\theta}^h_{N-1}(n_0)) - \sin(\hat{\theta}^h_j(n_0) - \hat{\theta}^h_{1}(n_0))] \\
&\hspace{0.5cm} + \sin(\hat{\theta}^h_N(n_0) - \hat{\theta}^h_{N-1}(n_0)) - \sin(\hat{\theta}^h_N(n_0) - \hat{\theta}^h_{1}(n_0)) \\
&\le -(N-1) \sin (\hat{\theta}^h_{N-1}(n_0) - \hat{\theta}^h_{1}(n_0)) - \cos \theta_{N-1,1}^* (\hat{\theta}^h_{N-1}(n_0) - \hat{\theta}^h_{1}(n_0)) \\
&\le -(N-1) \frac{\sin \varepsilon}{\varepsilon}(\hat{\theta}^h_{N-1}(n_0) - \hat{\theta}^h_{1}(n_0)) + (\hat{\theta}^h_{N-1}(n_0) - \hat{\theta}^h_{1}(n_0)),
\end{aligned}
\end{align}
where the last inequality follows from the fact that $\frac{\sin x}{x} $ is monotonically decreasing in $[0, \varepsilon]$ when $\varepsilon$ is sufficiently small, and $\theta_{N-1,1}^*$ is a constant between $\hat{\theta}^h_N(n_0) - \hat{\theta}^h_{N-1}(n_0)$ and $\hat{\theta}^h_N(n_0) - \hat{\theta}^h_{1}(n_0)$.
Therefore, we combine \eqref{D36}, \eqref{D37} and the fact $N\geq 3$ to obtain that 

\begin{equation}\label{D38}
\hat{\theta}^h_{N-1}(n_0 +1) - \hat{\theta}^h_{1}(n_0 +1) \le \left[1 - \frac{Kh}{N} \left((N-1) \frac{\sin \varepsilon}{\varepsilon} -1\right) \right] (\hat{\theta}^h_{N-1}(n_0) - \hat{\theta}^h_{1}(n_0)).
\end{equation}
As $\lim_{\varepsilon \to 0 } \frac{\sin \varepsilon}{\varepsilon} = 1$ and $\varepsilon$ is sufficiently small, we immediately conclude from \eqref{D38} that for sufficiently small $h$,
\[D(\hat{\Theta}^h_s(n_0+1))=\hat{\theta}^h_{N-1}(n_0 +1) - \hat{\theta}^h_{1}(n_0 +1)\le \hat{\theta}^h_{N-1}(n_0) - \hat{\theta}^h_{1}(n_0)<\varepsilon,\]
which is a contradiction to $\eqref{D34}_2$. Therefore, $D(\hat{\Theta}^h_s(n)) < \varepsilon$ for all steps $n$ and we finish the proof of the lemma.
\end{proof}

Next, we study the asymptotic synchronization behaviors of oscillators in $\mathcal{I}_s$. The following result states that the convergence to zero of phase diameter $D(\hat{\Theta}^h_s(n))$ is at least exponential.
\begin{lemma}
\label{L4.6}
For $N\geq 3$, we let $\hat{\Theta}^h(n) = (\hat{\theta}^h_1(n), \ldots, \hat{\theta}^h_N(n))$ be a solution to the discrete identical Kuramoto model \eqref{hat system} and $\hat{\Theta}^h_s=(\hat{\theta}^h_{1},\ldots, \hat{\theta}^h_{N-1})$. Moreover, we assume that the initial configuration satisfies the following conditions:
\[ \hat{\theta}^h_{N-1}(0) > \hat{\theta}^h_{N-2}(0) > \cdots > \hat{\theta}^h_1(0), \qquad D(\hat{\Theta}^h_s(0)) < \varepsilon,\]
where $\varepsilon$ is a sufficiently small positive number. Then, the diameter $D(\hat{\Theta}^h_s(n))$ is strictly monotonically decreasing, and moreover there exist positive numbers $h_0 $ and $\alpha$ such that, for $0<h<h_0$,
\[D(\hat{\Theta}^h_s(0)) \exp \left\{ - 2K nh\right\}<D(\hat{\Theta}^h_s(n)) < D(\hat{\Theta}^h_s(0)) \exp \left\{ - \alpha nh\right\}, \quad n \geq 0.\]
\end{lemma}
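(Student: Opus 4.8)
The plan is to mimic the strategy already used for Case $\mathcal{A}_1$ in Lemma \ref{L4.3}, but now with the extra bookkeeping coming from the single bipolar oscillator $N\in\mathcal{I}_b$. First I would invoke Lemma \ref{L4.5} to guarantee that under the hypotheses the order $\hat{\theta}^h_{N-1}(n)>\cdots>\hat{\theta}^h_1(n)$ and the smallness $D(\hat{\Theta}^h_s(n))<\varepsilon$ persist for every step $n$, so that the diameter can always be written as $D(\hat{\Theta}^h_s(n))=\hat{\theta}^h_{N-1}(n)-\hat{\theta}^h_1(n)$. Then, exactly as in \eqref{D36}--\eqref{D37}, I would expand $D(\hat{\Theta}^h_s(n+1))$ using the iteration scheme \eqref{hat system}, split the sum over $j$ into $j\le N-1$ (handled by Lemma \ref{sub-additive}) and the single term $j=N$ (handled by the mean value theorem, producing a $-\cos\theta^*_{N-1,1}(\hat{\theta}^h_{N-1}(n)-\hat{\theta}^h_1(n))$ contribution bounded in absolute value by $\hat{\theta}^h_{N-1}(n)-\hat{\theta}^h_1(n)$), to obtain the recursion
\[
D(\hat{\Theta}^h_s(n+1))\le\Big[1-\frac{Kh}{N}\Big((N-1)\frac{\sin\varepsilon}{\varepsilon}-1\Big)\Big]D(\hat{\Theta}^h_s(n)).
\]
Since $\frac{\sin\varepsilon}{\varepsilon}\to 1$ as $\varepsilon\to0$ and $N\ge3$, the bracketed coefficient is a number in $(0,1)$ for $\varepsilon$ and $h$ small, which gives strict monotone decrease and, after iterating and passing $\frac{\log(1-ch)}{h}\to-c$ by L'Hospital as in \eqref{D24}--\eqref{D26}, the upper bound $D(\hat{\Theta}^h_s(n))<D(\hat{\Theta}^h_s(0))\exp(-\alpha nh)$ for a suitable $\alpha>0$ and all $0<h<h_0$.

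For the lower bound $D(\hat{\Theta}^h_s(0))\exp(-2Knh)<D(\hat{\Theta}^h_s(n))$ I would go back to the same expansion of $\hat{\theta}^h_{N-1}(n+1)-\hat{\theta}^h_1(n+1)$ and estimate from below: using $\sin x>-|x|$ crudely on every one of the $N$ terms in the sum (or more precisely $|\sin(\hat{\theta}^h_j(n)-\hat{\theta}^h_{N-1}(n))-\sin(\hat{\theta}^h_j(n)-\hat{\theta}^h_1(n))|\le |\hat{\theta}^h_{N-1}(n)-\hat{\theta}^h_1(n)|=D(\hat{\Theta}^h_s(n))$ for each $j$), one gets
\[
D(\hat{\Theta}^h_s(n+1))\ge D(\hat{\Theta}^h_s(n))-\frac{Kh}{N}\cdot N\,D(\hat{\Theta}^h_s(n))=(1-Kh)D(\hat{\Theta}^h_s(n)),
\]
so that $D(\hat{\Theta}^h_s(n))\ge(1-Kh)^nD(\hat{\Theta}^h_s(0))$. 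Then, as $h\to0$, $\frac{\log(1-Kh)}{h}\to-K>-2K$, hence for $h$ small enough $(1-Kh)^n=\exp(n h\cdot\frac{\log(1-Kh)}{h})>\exp(-2Knh)$, which yields the claimed lower bound after shrinking $h_0$ if necessary.

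The main obstacle I anticipate is not any single calculation but making sure the constants line up consistently: the upper-bound argument forces $\varepsilon$ small enough that $(N-1)\frac{\sin\varepsilon}{\varepsilon}-1>0$ and then $h_0$ small relative to that fixed $\varepsilon$ (so that $1-\frac{Kh}{N}((N-1)\frac{\sin\varepsilon}{\varepsilon}-1)\in(0,1)$), while the lower-bound argument separately needs $h_0$ small enough that $\frac{\log(1-Kh)}{h}>-2K$; one must take the minimum of these thresholds and check that the same ordering and smallness hypotheses feeding Lemma \ref{L4.5} are still in force. A secondary delicate point is the treatment of the $j=N$ term: because $\hat{\theta}^h_N$ is near $\frac{N-1}{N}\pi$ (not near $0$), $\cos\theta^*_{N-1,1}$ need not be close to $1$, but it is still bounded by $1$ in absolute value, which is all that is used — I would state this explicitly to avoid the reader worrying that the bipolar oscillator destroys the contraction. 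Everything else is a routine repeat of the Case $\mathcal{A}_1$ computations.
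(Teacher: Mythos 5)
Your proposal is correct and follows essentially the same route as the paper: Lemma \ref{L4.5} for persistence of order and smallness, the recursion $D(\hat{\Theta}^h_s(n+1))\le\big[1-\tfrac{Kh}{N}\big((N-1)\tfrac{\sin\varepsilon}{\varepsilon}-1\big)\big]D(\hat{\Theta}^h_s(n))$ (the paper's \eqref{D38}, with the $j=N$ term only bounded via $|\cos\theta^*_{N-1,1}|\le 1$, exactly as you note) iterated and converted by the L'Hospital limit as in Lemma \ref{L4.3}, and the lower bound via the $1$-Lipschitz estimate giving $D(\hat{\Theta}^h_s(n+1))\ge(1-Kh)D(\hat{\Theta}^h_s(n))$, matching the paper's \eqref{D40}--\eqref{D41}. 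No substantive difference from the paper's argument.
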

\begin{proof}
According to Lemma \ref{L4.5}, we know the order and diameter are both preserved for $\hat{\Theta}^h_s(n)$. Moreover, we have the estimate \eqref{D38}. Then, the iteration of \eqref{D38} leads to the estimates of the diameter of $\hat{\Theta}^h_s(n)$ as below,
\begin{equation*}
D(\hat{\Theta}^h_s(n+1))< \left[1 - \frac{Kh}{N}  \left((N-1) \frac{\sin \varepsilon}{\varepsilon} -1\right) \right]^{n+1} D(\hat{\Theta}^h_s(0)),
\end{equation*}
where $N\geq 3$. As $\varepsilon$ and $h$ are sufficiently small, we can follow the proof of Lemma \ref{L4.3} and apply L'Hospital's rule to obtain that 	
\begin{equation}\label{D39}
D(\hat{\Theta}^h_s(n)) < D(\hat{\Theta}^h_s(0)) \exp \left\{ - \alpha nh\right\}, \quad \alpha=\frac{K[(N-1) \frac{\sin \varepsilon}{\varepsilon} -1]}{2N}>0, \quad n\geq 0.
\end{equation}
On the other hand, according to Lemma \ref{L4.5}, we can apply similar analysis in \eqref{D37} to obtain the following estimate,
\begin{align}
\begin{aligned}\label{D40}
 &D(\hat{\Theta}^h_s(n + 1)) \\
 & = \hat{\theta}^h_{N-1}(n) - \hat{\theta}^h_{1}(n)+ \frac{Kh}{N} \sum^{N}_{j=1} \sin(\hat{\theta}^h_{j}(n) - \hat{\theta}^h_{N-1}(n))  - \frac{Kh}{N} \sum^{N}_{j=1} \sin(\hat{\theta}^h_{j}(n) - \hat{\theta}^h_{1}(n))\\
 &\geq  \hat{\theta}^h_{N-1}(n) - \hat{\theta}^h_{1}(n)-Kh(\hat{\theta}^h_{N-1}(n) - \hat{\theta}^h_{1}(n))\\
 &=(1-Kh)D(\hat{\Theta}^h_s(n)).
\end{aligned}
\end{align}
Then, following again the proof of Lemma \ref{L4.3}, we apply \eqref{D40} and L'Hospital's rule to obtain that
\begin{equation}\label{D41}
D(\hat{\Theta}^h_s(n + 1))\geq \exp\{-2K(n+1)h\}D(\hat{\Theta}^h_s(0)).
\end{equation}
Finally, we combine \eqref{D39}, \eqref{D40} and \eqref{D41} to finish the proof of the present lemma. 
\end{proof}
\begin{remark}\label{R4.3}
The results in this part only show the asymptotical synchronization of $\hat{\Theta}^h_s$, but we still do not know which equilibrium state does $\hat{\Theta}^h_s$ approach. To study the asymptotical equilibrium state, we have to use the conservation of the mean phase of all oscillators. Therefore, we need the information of the $N$-th oscillator.\newline  
\end{remark}

\subsubsection{(The behavior of $N$-th oscillator)}
According to Remark \ref{R4.3}, in order to understand the asymptotical behavior of all oscillators, we have to study the behavior of $N$-th oscillator. For the continuous Kuramoto model, $\Theta(t)$ will approach bipolar formation if $\Theta_0\in\mathcal{A}_2$. However, for the same initial data $\Theta_0\in\mathcal{A}_2$, we cannot guarantee the emergence of bipolar formation in discrete-time Kuramoto model, which may be due to the instability of the bipolar formation in the continuous model. More precisely, according to \eqref{D27} and the definition of the efficient phases, we know that $\theta_i$ are closed to $-\frac{\pi}{N}$ for $i\in\mathcal{I}_s$ and $\theta_N$ is closed to $\frac{N-1}{N}\pi$ at $T_{\varepsilon}$ after a translation. Therefore, as bipolar formation emerges asymptotically for $\Theta_0\in\mathcal{A}_2$, we immediately have 
\begin{equation}\label{D42}
\theta_1(t)+\pi<\theta_N(t)<\theta_{N-1}(t)+\pi,\quad \Theta_0\in\mathcal{A}_2\quad t\geq T_{\varepsilon}.
\end{equation}
Otherwise, all the particles will be contained in a half circle at some time $t\geq T_{\varepsilon}$, and thus the complete synchronization will emerge asymptotically which is a well known result for continuous identical Kuramoto model. However, as $\Theta^h(n)$ and $\Theta(nh)$ have nonzero error, we cannot tell if \eqref{D42} holds for $\Theta^h$ at step $l$, where $l=\frac{T_{\varepsilon}}{h}$ and $\Theta^h$ is the solution to the discrete-time identical Kuramoto model with initial data $\Theta_0\in\mathcal{A}_2$. Therefore, we will study the large time behavior of discrete-time Kuramoto model with initial data $\Theta_0\in\mathcal{A}_2$ in two different cases.\newline   
	
\noindent $\diamond$ Case i: (bipolar emergence) According to Lemma \ref{L4.4}, $\hat{\Theta}^h_s$ are closed to $-\frac{1}{N}\pi$ for $i\in\mathcal{I}_s$ and $\hat{\theta}^h_N$ is closed to $\frac{N-1}{N}\pi$ at step $l$. Let's suppose \eqref{D42} holds for $\hat{\Theta}^h(n)$ at any step $n\geq l$, i.e. we assume 
\begin{equation}\label{D43}
\hat{\theta}^h_1(n) + \pi \le \hat{\theta}^h_N(n) \le \hat{\theta}^h_{N-1}(n) + \pi, \qquad n \ge l.
\end{equation}
\begin{lemma}\label{L4.7}
For $N\geq 3$, let $\hat{\Theta}^h(n) = (\hat{\theta}^h_1(n), \ldots, \hat{\theta}^h_N(n))$ be a solution to the discrete identical Kuramoto model \eqref{hat system} and $\hat{\Theta}^h_s=(\hat{\theta}^h_{1},\ldots, \hat{\theta}^h_{N-1})$.
Moreover, we assume that 
\begin{equation}\label{D44}
\left\{
\begin{aligned}
&\hat{\theta}^h_{N-1}(0) > \hat{\theta}^h_{N-2}(0) > \cdots > \hat{\theta}^h_1(0),\\
& D(\hat{\Theta}^h_s(0)) < \varepsilon,\quad |\hat{\theta}^h_N(0)- \frac{(N-1)\pi}{N}|<\frac{\varepsilon}{4},\\
&\hat{\theta}^h_1(n) + \pi \le \hat{\theta}^h_N(n) \le \hat{\theta}^h_{N-1}(n) + \pi, \qquad n \ge 0.
\end{aligned}
\right.
\end{equation}
Then, for sufficiently small time-step $h$, we have
\begin{align*}
\left\{\begin{aligned}
&|\hat{\theta}^h_N(n) - \frac{(N-1)\pi}{N}| < \frac{N-1}{N}D(\hat{\Theta}^h_s(0))e^{- \alpha nh},\\
&|\hat{\theta}^h_j(n) + \frac{\pi}{N}| < \frac{2N-1}{N}D(\hat{\Theta}^h_s(0))e^{- \alpha nh}, \quad j \in \mathcal{I}_s.
\end{aligned}
\right.
\end{align*}
\end{lemma}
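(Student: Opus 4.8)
The plan is to obtain Lemma \ref{L4.7} as a short corollary of Lemma \ref{L4.6} and the conservation of the total phase, without ever linearizing the iteration for $\hat{\theta}^h_N(n)$ itself. Since the bipolar configuration is an unstable equilibrium, a direct analysis of the $N$-th oscillator would only exhibit growth; the role of the standing hypothesis $\hat{\theta}^h_1(n)+\pi\le\hat{\theta}^h_N(n)\le\hat{\theta}^h_{N-1}(n)+\pi$ is precisely to trap $\hat{\theta}^h_N(n)$ between the two extremes of the cluster $\hat{\Theta}^h_s(n)$, which contracts exponentially by Lemma \ref{L4.6}. So the first step is to apply Lemma \ref{L4.6} to $\hat{\Theta}^h_s$: the first two lines of \eqref{D44} are exactly its hypotheses, hence for $\varepsilon$ and $h$ small the order $\hat{\theta}^h_{N-1}(n)>\cdots>\hat{\theta}^h_1(n)$ persists and $D(\hat{\Theta}^h_s(n))<D(\hat{\Theta}^h_s(0))e^{-\alpha nh}$ with $\alpha=\frac{K[(N-1)\frac{\sin\varepsilon}{\varepsilon}-1]}{2N}>0$. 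This input is unconditional, because in the proof of Lemma \ref{L4.5} (on which Lemma \ref{L4.6} rests) the contribution of the $N$-th oscillator to the difference estimates is absorbed by a crude mean-value bound, so no control on $\hat{\theta}^h_N$ is needed there.

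Next I would recentre the variables: set $w(n):=\hat{\theta}^h_N(n)-\frac{(N-1)\pi}{N}$ and $z_j(n):=\hat{\theta}^h_j(n)+\frac{\pi}{N}$ for $j\in\mathcal{I}_s$. Because $\pi-\frac{(N-1)\pi}{N}=\frac{\pi}{N}$, the trapping hypothesis becomes $z_1(n)\le w(n)\le z_{N-1}(n)$; the preserved order gives $z_1(n)=\min_{j\in\mathcal{I}_s}z_j(n)$, $z_{N-1}(n)=\max_{j\in\mathcal{I}_s}z_j(n)$, and the interval $[z_1(n),z_{N-1}(n)]$ has length $z_{N-1}(n)-z_1(n)=D(\hat{\Theta}^h_s(n))$. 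From \eqref{hat system} the sum $\sum_{i=1}^N\hat{\theta}^h_i(n)$ is conserved and equals $0$, so $\hat{\theta}^h_N(n)=-\sum_{j=1}^{N-1}\hat{\theta}^h_j(n)$; rewritten in the new variables this is $w(n)=-\sum_{j=1}^{N-1}z_j(n)$, and hence the cluster mean $\bar{z}(n):=\frac{1}{N-1}\sum_{j=1}^{N-1}z_j(n)$ equals $-\frac{w(n)}{N-1}$.

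The conclusion is then pure arithmetic. Both $w(n)$ (by the trapping) and $\bar{z}(n)$ (being an average of the $z_j(n)$) lie in $[z_1(n),z_{N-1}(n)]$, so their distance is at most the length of this interval:
\[
\frac{N}{N-1}\,|w(n)|=\left|\,w(n)+\frac{w(n)}{N-1}\,\right|=|w(n)-\bar{z}(n)|<D(\hat{\Theta}^h_s(0))e^{-\alpha nh},
\]
which is the first asserted bound $|\hat{\theta}^h_N(n)-\frac{(N-1)\pi}{N}|<\frac{N-1}{N}D(\hat{\Theta}^h_s(0))e^{-\alpha nh}$. For the second bound, for any $j\in\mathcal{I}_s$ the points $z_j(n)$ and $w(n)$ likewise lie in $[z_1(n),z_{N-1}(n)]$, so $|z_j(n)-w(n)|<D(\hat{\Theta}^h_s(0))e^{-\alpha nh}$, and the triangle inequality together with the first bound yields $|z_j(n)|<\frac{2N-1}{N}D(\hat{\Theta}^h_s(0))e^{-\alpha nh}$, i.e. $|\hat{\theta}^h_j(n)+\frac{\pi}{N}|<\frac{2N-1}{N}D(\hat{\Theta}^h_s(0))e^{-\alpha nh}$. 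In this argument there is essentially no technical obstacle — the only thing to see is the conceptual point above, namely that the trapping hypothesis converts an apparently unstable configuration into a squeeze between two exponentially converging endpoints; the smallness thresholds on $h$ and $\varepsilon$ are inherited verbatim from Lemma \ref{L4.6}.
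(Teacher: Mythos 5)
Your proposal is correct and follows essentially the same route as the paper: Lemma \ref{L4.6} for the exponential contraction of $D(\hat{\Theta}^h_s(n))$, the trapping hypothesis $\eqref{D44}_3$, and the zero total effective phase, combined by a squeeze/triangle-inequality argument. Your recentred variables $w,z_j$ and the mean $\bar z$ are just a cosmetic repackaging of the paper's averaging identity \eqref{D45}--\eqref{D46}, and the constants $\frac{N-1}{N}$, $\frac{2N-1}{N}$ come out identically.
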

\begin{proof}
According to \eqref{D27} and \eqref{D28}, the sum of effective phases is equal to zero. Therefore,  it is clear that 
\begin{equation}\label{D45}
\frac{\sum_{i=1}^{N-1} (\hat{\theta}^h_i(n) + \pi) + \hat{\theta}^h_N(n)}{N} = \frac{(N-1)\pi}{N}.
\end{equation}
As the initial data satisfies \eqref{D44}, according to Lemma \ref{L4.6} and the condition $\eqref{D44}_3$, we have for $j \in \mathcal{I}_s$ that,
\begin{align}\label{D46}
\begin{aligned}
\left| \hat{\theta}^h_N(n) - (\hat{\theta}^h_j(n) + \pi) \right| &< (\hat{\theta}^h_{N-1}(n) + \pi) - (\hat{\theta}^h_1(n) + \pi) \le e^{- \alpha nh} D(\hat{\Theta}^h_s(0)).
\end{aligned}
\end{align}
Combining \eqref{D45} and \eqref{D46}, we obtain that
\begin{align*}
\begin{aligned}
|\hat{\theta}^h_N(n) - \frac{N-1}{N}\pi| &=|\hat{\theta}^h_N(n) - \frac{\sum_{i=1}^{N-1} (\hat{\theta}^h_i(n) + \pi) + \hat{\theta}^h_N(n)}{N}|\\
& = |\frac{\sum_{i=1}^{N-1} [\hat{\theta}^h_N(n) - (\hat{\theta}^h_i(n)+\pi)]}{N}| < \frac{N-1}{N} D(\hat{\Theta}^h_s(0)) e^{- \alpha nh}.
\end{aligned}
\end{align*}
Finally, for $i \in \mathcal{I}_s$, we have
\begin{align*}
\begin{aligned}
|\hat{\theta}^h_i(n) + \frac{\pi}{N}| &= |\hat{\theta}^h_i(n) - (\hat{\theta}^h_N(n)-\pi) + (\hat{\theta}^h_N(n)-\pi) + \frac{\pi}{N}| \\
&\le |(\hat{\theta}^h_i(n) + \pi)- \hat{\theta}^h_N(n)| + |\hat{\theta}^h_N(n) - \frac{N-1}{N}\pi| < \frac{2N-1}{N}D(\hat{\Theta}^h_s(0))e^{- \alpha nh}.
\end{aligned}
\end{align*}
\end{proof}
\begin{remark}\label{R4.4}
Owing to \eqref{D27} and \eqref{D28}, we obtain from Lemma \ref{L4.7} for (Case i) that
\begin{equation*}
\left\{
\begin{aligned}
&\left|\theta^h_N(n) - (2k_N+1)\pi - \phi_1^* \right| < \frac{N-1}{N}D(\hat{\Theta}^h_s(0))e^{- \alpha nh},\\
&|\theta_j^h(n) -2k_j\pi - \phi_1^*| < \frac{2N-1}{N}D(\hat{\Theta}^h_s(0))e^{- \alpha nh}, \quad j \in \mathcal{I}_s.
\end{aligned}
\right.
\end{equation*}
\end{remark}
\vspace{0.5cm}

\noindent $\diamond$ Case ii: (phase synchronization) According to Lemma \ref{L4.7}, we know the bipolar formation will emerge if \eqref{D44} holds. Then, the second case is that $\eqref{D44}_3$ does not hold for some step. In other words, there exists a step $n_e\geq l$ such that, $\hat{\theta}^h_N$ gets out of the region $(\hat{\theta}^h_{1}(n)+\pi,\hat{\theta}^h_{N-1}(n) + \pi)$ first time at step $n_e$. Then, as the step size $h$ is sufficiently small, it's obvious that $\hat{\theta}^h_N(n_e)$ is either slightly smaller than $\hat{\theta}^h_{1}(n_e)+\pi$ or slightly greater than $\hat{\theta}^h_{N-1}(n_e)+\pi$. As these two cases can be analyzed similarly, we will only study the following case, 
\begin{equation}
\left\{\begin{aligned}\label{D47}
&\hat{\theta}^h_{N-1}(n_e) < \hat{\theta}^h_N(n_e) < \hat{\theta}^h_1(n_e) + \pi,\\
&\hat{\theta}^h_{1}(n)+\pi < \hat{\theta}^h_N(n) < \hat{\theta}^h_{N-1}(n) + \pi,\quad l\leq n<n_e.
\end{aligned}
\right.
\end{equation}
\begin{remark}\label{R4.5}
It's possible that $\hat{\theta}^h_N(n_e)=\hat{\theta}^h_1(n_e) + \pi$ at step $n_e$. However, according to Lemma \ref{L4.6}, we know the diameter $D(\hat{\Theta}^h_s)$ will be nonzero at any finite step if it is nonzero initially. Therefore, there must be some oscillator $\hat{\theta}^h_i$ such that $\hat{\theta}^h_1(n_e)<\hat{\theta}^h_i(n_e)<\hat{\theta}^h_N$. Then, if $\hat{\theta}^h_N(n_e)=\hat{\theta}^h_1(n_e) + \pi$ at step $n_e$, the attraction from $\hat{\theta}^h_i$ to $\hat{\theta}^h_1$ and $\hat{\theta}^h_N$ will force them tend to closer in the next step, i.e.
\[\hat{\theta}^h_N(n_e+1)<\hat{\theta}^h_1(n_e+1) + \pi.\]
Therefore, we only need to study the case \eqref{D47}.
\end{remark}
\noindent According to \eqref{D47} and the Remark \ref{R4.5}, we can find a positive constant $\eta$ such that the oscillators $\hat{\Theta}^h$ satisfy the following properties at step $n_e$, 
\begin{equation}\label{D48}
 \hat{\theta}^h_1(n_e) < \cdots < \hat{\theta}^h_{N-1}(n_e) < \hat{\theta}^h_{N}(n_e), \quad \hat{\theta}^h_{N}(n_e) - \hat{\theta}^h_1(n_e) <\eta <  \pi.
 \end{equation}
Now, we can set $n_e$ as the initial data and study the large time behavior of the discrete-time model after the step $n_e$.
\begin{lemma}\label{uniform boundness of diameter and order of N oscillators when |I_b| is 1 and  N-1 less N less 1 plus pi} 
For $N\geq 3$, we let $\hat{\Theta}^h(n) = (\hat{\theta}^h_1(n), \ldots, \hat{\theta}^h_N(n))$ be a solution to the discrete identical Kuramoto model \eqref{hat system} with initial zero total phase $\sum_{i=1}^N \hat{\theta}_i^0 = 0$. Moreover, we assume that the initial configuration satisfies the following conditions:
\[ \hat{\theta}^h_1(0) < \cdots < \hat{\theta}^h_{N-1}(0) < \hat{\theta}^h_{N}(0), \qquad D(\hat{\Theta}^h(0)) < \eta.\]
Then for $n\geq 0$, we have
\begin{equation*}
\left\{
\begin{aligned}
&D(\hat{\Theta}^h(n)) = \max_{1\leq i,j \leq N} |\hat{\theta}^h_i(n) - \hat{\theta}^h_j(n)| < \eta < \pi,\\
&\hat{\theta}^h_1(n) < \cdots < \hat{\theta}^h_{N-1}(n) < \hat{\theta}^h_{N}(n).
\end{aligned}
\right.
\end{equation*}
\end{lemma}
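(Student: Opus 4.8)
The plan is to reproduce, essentially line by line, the two‑part induction‑plus‑contradiction scheme used in the proof of Lemma~\ref{L4.2}: first show that the prescribed ordering $\hat{\theta}^h_1(n)<\cdots<\hat{\theta}^h_N(n)$ is preserved \emph{provided} the diameter stays below $\eta$, and then show by contradiction that the diameter indeed stays below $\eta$. The one genuinely new point is that here the arc length $\eta$ is allowed to be close to $\pi$ rather than infinitesimal. The structural fact that makes the same argument go through is that $\sin$ is concave on $[0,\pi]$, so that Lemma~\ref{sub-additive}, applied to $\sin$ restricted to any interval $[0,c]$ with $c<\pi$, still gives $\sin a+\sin b\ge \sin(a+b)$ whenever $a,b\ge 0$ and $a+b<\pi$; and every pairwise difference $\hat{\theta}^h_i(n)-\hat{\theta}^h_j(n)$ entering the computation will be controlled by $D(\hat{\Theta}^h(n))<\eta<\pi$, hence lies in $(-\pi,\pi)$ and $\sin$ has the same sign as its argument. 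Throughout I take $h$ small enough that $1-Kh>0$, as in all the earlier lemmas.

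\emph{Step 1 (order preservation under the a priori bound).} Assuming $D(\hat{\Theta}^h(m))<\eta$ for all $m$ up to the step under consideration, I would argue by induction on $n$. Given $\hat{\theta}^h_1(k)<\cdots<\hat{\theta}^h_N(k)$, I would write, for $i=2,\dots,N$, the identity
\begin{align*}
\hat{\theta}^h_{i}(k+1)-\hat{\theta}^h_{i-1}(k+1)
&=\hat{\theta}^h_{i}(k)-\hat{\theta}^h_{i-1}(k)
-\frac{Kh}{N}\sum_{j=1}^{i-1}\bigl[\sin(\hat{\theta}^h_i(k)-\hat{\theta}^h_j(k))-\sin(\hat{\theta}^h_{i-1}(k)-\hat{\theta}^h_j(k))\bigr]\\
&\quad-\frac{Kh}{N}\sum_{j=i}^{N}\bigl[\sin(\hat{\theta}^h_j(k)-\hat{\theta}^h_{i-1}(k))-\sin(\hat{\theta}^h_j(k)-\hat{\theta}^h_i(k))\bigr],
\end{align*}
exactly as in \eqref{D14}. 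Since $i$ and $i-1$ are adjacent and $D(\hat{\Theta}^h(k))<\pi$, for every $j\neq i,i-1$ the two relevant differences lie in $(-\pi,\pi)$ and have the same sign, so the sub-additivity of $\sin$ on $[0,\pi]$ bounds each bracketed sum below by a multiple of $-\sin(\hat{\theta}^h_i(k)-\hat{\theta}^h_{i-1}(k))$ just as in \eqref{D15}. Combining and using $\sin x<x$ on $(0,\pi)$ yields $\hat{\theta}^h_{i}(k+1)-\hat{\theta}^h_{i-1}(k+1)>(1-Kh)\bigl(\hat{\theta}^h_i(k)-\hat{\theta}^h_{i-1}(k)\bigr)>0$, closing the induction.

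\emph{Step 2 (the diameter bound, by contradiction).} Suppose the bound fails and let $n_0$ be the first ``stopping step,'' so $D(\hat{\Theta}^h(n))<\eta$ for $0\le n\le n_0$ but $D(\hat{\Theta}^h(n_0+1))\ge\eta$. Running Step~1 up through step $n_0+1$ is legitimate, so the ordering holds there and $D(\hat{\Theta}^h(n))=\hat{\theta}^h_N(n)-\hat{\theta}^h_1(n)$ for $0\le n\le n_0+1$. Expanding the iteration for the extreme indices $N$ and $1$ at step $n_0$, and using $\hat{\theta}^h_j(n_0)-\hat{\theta}^h_N(n_0)\le 0$, $\hat{\theta}^h_j(n_0)-\hat{\theta}^h_1(n_0)\ge 0$ together with $\sin a+\sin b\ge\sin(a+b)$ for $a=\hat{\theta}^h_N(n_0)-\hat{\theta}^h_j(n_0)$, $b=\hat{\theta}^h_j(n_0)-\hat{\theta}^h_1(n_0)$, $a+b<\eta<\pi$, I would obtain, exactly as in \eqref{D18}--\eqref{D19},
\[
\sum_{j=1}^{N}\sin(\hat{\theta}^h_j(n_0)-\hat{\theta}^h_N(n_0))-\sum_{j=1}^{N}\sin(\hat{\theta}^h_j(n_0)-\hat{\theta}^h_1(n_0))\le-N\sin\bigl(\hat{\theta}^h_N(n_0)-\hat{\theta}^h_1(n_0)\bigr)<0,
\]
hence $D(\hat{\Theta}^h(n_0+1))\le \hat{\theta}^h_N(n_0)-\hat{\theta}^h_1(n_0)-Kh\sin\bigl(\hat{\theta}^h_N(n_0)-\hat{\theta}^h_1(n_0)\bigr)<\hat{\theta}^h_N(n_0)-\hat{\theta}^h_1(n_0)<\eta$, contradicting the choice of $n_0$. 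Thus $D(\hat{\Theta}^h(n))<\eta$ for all $n$, and then Step~1 applies for all $n$, giving the order preservation as well.

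The only step that is not a literal copy of Lemma~\ref{L4.2}, and the place to be careful, is precisely the repeated use of $\sin a+\sin b\ge\sin(a+b)$ when the sum $a+b$ can be close to $\pi$: one must verify at each application that all the $\sin$ arguments lie in $[0,\pi)$ — equivalently that $D(\hat{\Theta}^h(n))<\eta<\pi$ along the way — which is exactly what the running induction hypothesis of Step~2 supplies, and that $\sin$ is concave on the interval in play, which holds on all of $[0,\pi]$. No smallness of $\eta$ is needed, only $h$ small enough that $1-Kh>0$.
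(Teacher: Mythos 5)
Your proposal is correct and follows essentially the same two-step scheme as the paper's proof: order preservation under the a priori diameter bound (via the decomposition of Lemma \ref{L4.2} and sub-additivity of $\sin$ on $[0,\pi]$), then the diameter bound by contradiction at a first stopping step, using $\sin a+\sin b\ge\sin(a+b)$ for the extreme oscillators. The only cosmetic difference is that the paper quantifies the contraction through the monotonicity of $\frac{\sin x}{x}$ on $(0,\pi)$, obtaining the factor $\bigl(1-Kh\frac{\sin\eta}{\eta}\bigr)$, whereas you conclude directly from $\sin\bigl(D(\hat{\Theta}^h(n_0))\bigr)>0$; both yield the same contradiction.
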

\begin{proof}
If $D(\hat{\Theta}^h(n)) < \eta < \pi$  holds for all step $n$, we can apply the same argument in Lemma \ref{L4.5} to prove that the order of oscillators $D(\hat{\Theta}^h(n))$ is preserved for any step $n$. Therefore, we only need to verify the inequality $D(\hat{\Theta}^h(n)) < \eta < \pi$ for $n\geq 0$. Actually, suppose not, then there exists some step $n_0$ such that
\begin{equation}\label{D49}
\left\{\begin{aligned}
& D(\hat{\Theta}^h(n)) < \eta < \pi,\quad n\leq n_0,\\
& D(\hat{\Theta}^h(n_0+1)) \ge \eta. 
\end{aligned}
\right.
\end{equation}
Then, similar as in Lemma \ref{L4.5}, we can derive the order of $\hat{\Theta}^h(n)$ for time steps not more than $n_0+1$, i.e.
\begin{equation}\label{D50}
\hat{\theta}^h_1n) < \hat{\theta}^h_{2}(n) < \cdots < \hat{\theta}^h_N(n), \quad 0\leq n\leq n_0+1.
\end{equation}
Then, applying Lemma \ref{sub-additive} and monotonically decresing of function $\frac{\sin x}{x}$ in $(0, \pi)$, we obtain the estimate for the phase diameter at the $(n_0 +1)$-th step as follows,
\begin{align}
\begin{aligned}\label{D51}
&D(\hat{\Theta}^h(n_0 + 1)) \\
&= \hat{\theta}^h_{N}(n_0) - \hat{\theta}^h_{1}(n_0) + \frac{Kh}{N} \sum^{N}_{j=1} \sin(\hat{\theta}^h_{j}(n_0) - \hat{\theta}^h_{N}(n_0))- \frac{Kh}{N} \sum^{N}_{j=1} \sin(\hat{\theta}^h_{j}(n_0) - \hat{\theta}^h_{1}(n_0)) \\
&\le \left( 1- Kh \frac{\sin \eta}{\eta} \right)  (\hat{\theta}^h_{N}(n_0) - \hat{\theta}^h_{1}(n_0))\\
&\leq\hat{\theta}^h_{N}(n_0) - \hat{\theta}^h_{1}(n_0),
\end{aligned}
\end{align}
where the last inequality holds if we choose sufficiently small $h$. Then $\eqref{D49}_1$ and \eqref{D51} immediately imply that $D(\hat{\Theta}^h(n_0 + 1))<\eta$, which is obviously a contradiction to $\eqref{D49}_2$. Therefore, we conclude $D(\hat{\Theta}^h(n)) < \eta < \pi$ for $n\geq 0$, and thus the order \eqref{D50} holds for all $n\geq 0$.
\end{proof}

\noindent Next, we study the asymptotic synchronization behaviors of  N oscillators in (Case ii). The following result states that the effective phases $\hat{\Theta}^h(n)$ will converge to zero exponentially in (Case ii).

\begin{lemma}\label{L4.9}
For $N\geq 3$, we let $\hat{\Theta}^h(n) = (\hat{\theta}^h_1(n), \ldots, \hat{\theta}^h_N(n))$ be a solution to the discrete identical Kuramoto model \eqref{hat system} with initial zero total phase $\sum_{i=1}^N \hat{\theta}_i^0 = 0.$ Moreover, we assume that the initial configuration satisfies the following conditions:
\[ \hat{\theta}^h_1(0) < \cdots < \hat{\theta}^h_{N-1}(0) < \hat{\theta}^h_{N}(0), \qquad D(\hat{\Theta}^h(0)) < \eta.\]
Then, there exists a positive constant $h_0$ such that for $0<h<h_0$,
\begin{align*}
\left\{\begin{aligned}
&D(\hat{\Theta}^h(n)) < D(\hat{\Theta}(0)) \exp\left(-\frac{K\sin \eta}{2\eta} nh \right),\\ 
&|\hat{\theta}^h_j(n)| < D(\hat{\Theta}(0)) \exp\left(-\frac{K\sin \eta}{2\eta} nh \right),\quad n\geq 0.
\end{aligned}
\right.
\end{align*}
\end{lemma}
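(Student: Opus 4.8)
The plan is to run the same scheme used for Lemmas \ref{L4.3} and \ref{L4.6}, now taking the preceding Lemma \ref{uniform boundness of diameter and order of N oscillators when |I_b| is 1 and  N-1 less N less 1 plus pi} as the starting point. First I would invoke that lemma: for $h$ small enough the order $\hat{\theta}^h_1(n)<\cdots<\hat{\theta}^h_N(n)$ and the bound $D(\hat{\Theta}^h(n))<\eta<\pi$ persist for all $n\geq 0$, so throughout we may write $D(\hat{\Theta}^h(n))=\hat{\theta}^h_N(n)-\hat{\theta}^h_1(n)$. This is what makes all subsequent sign manipulations legitimate.

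Second, I would derive a one-step contraction. Subtracting the equations of \eqref{hat system} for indices $N$ and $1$ gives
\begin{equation*}
D(\hat{\Theta}^h(n+1)) = D(\hat{\Theta}^h(n)) + \frac{Kh}{N}\sum_{j=1}^{N}\Big[\sin(\hat{\theta}^h_j(n)-\hat{\theta}^h_N(n)) - \sin(\hat{\theta}^h_j(n)-\hat{\theta}^h_1(n))\Big].
\end{equation*}
For each $j$, setting $a=\hat{\theta}^h_N(n)-\hat{\theta}^h_j(n)\geq 0$ and $b=\hat{\theta}^h_j(n)-\hat{\theta}^h_1(n)\geq 0$ with $a+b=D(\hat{\Theta}^h(n))<\eta<\pi$, the sub-additivity of the concave function $\sin$ on $[0,\pi]$ (Lemma \ref{sub-additive}) yields $\sin a+\sin b\geq \sin(a+b)$, i.e. $\sin(\hat{\theta}^h_j(n)-\hat{\theta}^h_N(n))-\sin(\hat{\theta}^h_j(n)-\hat{\theta}^h_1(n))\leq -\sin(D(\hat{\Theta}^h(n)))$. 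Summing over $j$ and using that $x\mapsto \frac{\sin x}{x}$ is decreasing on $(0,\pi)$, so $\sin(D(\hat{\Theta}^h(n)))\geq \frac{\sin\eta}{\eta}\,D(\hat{\Theta}^h(n))$, we obtain
\begin{equation*}
D(\hat{\Theta}^h(n+1)) \leq \Big(1-Kh\frac{\sin\eta}{\eta}\Big)D(\hat{\Theta}^h(n)).
\end{equation*}

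Third, I would iterate this bound, choose $h$ small enough that $1-Kh\frac{\sin\eta}{\eta}\in(0,1)$, and convert the geometric decay to exponential decay exactly as in \eqref{D24}--\eqref{D26}: write $\big(1-Kh\frac{\sin\eta}{\eta}\big)^{n}=\exp\big[nh\cdot h^{-1}\log(1-Kh\frac{\sin\eta}{\eta})\big]$ and apply L'Hospital's rule, $\lim_{h\to 0} h^{-1}\log(1-Kh\frac{\sin\eta}{\eta})=-K\frac{\sin\eta}{\eta}$, so there is $h_0>0$ with $h^{-1}\log(1-Kh\frac{\sin\eta}{\eta})<-\frac{K\sin\eta}{2\eta}$ for $0<h<h_0$; this gives the first inequality of the lemma. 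Finally, $\sum_{j=1}^N\hat{\theta}^h_j(n)=0$ for all $n$ (immediate from \eqref{hat system} and the hypothesis $\sum_j\hat{\theta}^0_j=0$), hence $|\hat{\theta}^h_j(n)|=|\hat{\theta}^h_j(n)-\frac1N\sum_i\hat{\theta}^h_i(n)|\leq \frac1N\sum_i|\hat{\theta}^h_j(n)-\hat{\theta}^h_i(n)|\leq D(\hat{\Theta}^h(n))$, which upgrades the diameter estimate to the claimed pointwise bound.

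I do not expect a genuine obstacle here: the argument is structurally identical to Lemmas \ref{L4.3} and \ref{L4.6}. The only points needing care are (i) verifying that the hypotheses of Lemma \ref{sub-additive} are actually met — which is precisely what order preservation together with $D(\hat{\Theta}^h(n))<\eta<\pi$ from the previous lemma delivers — and (ii) pinning the rate constant $\frac{K\sin\eta}{2\eta}$ via the L'Hospital limit, which dictates the smallness requirement on $h$. One should also note that the single $h_0$ must simultaneously guarantee the conclusions of Lemma \ref{uniform boundness of diameter and order of N oscillators when |I_b| is 1 and  N-1 less N less 1 plus pi}, the positivity of $1-Kh\frac{\sin\eta}{\eta}$, and the exponent bound; taking the minimum of these finitely many thresholds suffices.
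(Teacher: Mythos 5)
Your proposal is correct and follows essentially the same route as the paper: the paper's proof of this lemma simply invokes the order/diameter preservation of the preceding lemma together with the one-step contraction estimate \eqref{D51} and then repeats the iteration--L'Hospital argument of Lemma \ref{L4.3}, which is exactly what you spell out (including the zero-total-phase step for the pointwise bound). No gaps.
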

\begin{proof}
From Lemma \ref{uniform boundness of diameter and order of N oscillators when |I_b| is 1 and  N-1 less N less 1 plus pi} and the estimate \eqref{D51}, we can apply the same argument in Lemma \ref{L4.3} to obtain the desired results. 

\end{proof}
\begin{remark}\label{R4.6}
Owing to \eqref{D27} and \eqref{D28}, we obtain from Lemma \ref{L4.9} for (Case ii) that
\[|\theta_i^h(n) - 2k_i\pi - \phi_1^* - \frac{\pi}{N}| < D(\hat{\Theta}(0)) \exp\left(-\frac{K\sin \eta}{2\eta} nh \right), \quad i = 1,2,\ldots, N.\]\newline
\end{remark}

\subsection{Proof of Theorem \ref{T4.1}} Now, we are ready to prove Theorem \ref{T4.1}. Actually, combining Remark \ref{R4.2}
and Remark \ref{R4.4}, we directly conclude that, for any initial data satisfying the condition in Theorem \ref{T4.1}, we can find a time step $n_e$ and an equilibrium state $\Theta^\infty$ such that 
\[\| \Theta^h(n) - \Theta^\infty\|_\infty < Ce^{-\alpha (n-n_e)h}, \qquad n\ge n_e,\]
where $C$, $\alpha$ and $n_e$ are positive constants depending on initial data. Moreover, according to the expression of $\phi_0^*$ and $\phi_1^*$, the form of phase locked state in Remark \ref{R4.6} are equivalent to the equilibrium state constructed in Remark \ref{R4.2}. Therefore, we only have two types of phase locked states as mentioned in Theorem \ref{T4.1}, i.e.
\begin{enumerate}
\item $\Theta^\infty =(2k_1 \pi + \phi_0^*,\ldots, 2k_{N}\pi + \phi_0^*)$, or 
\item $\Theta^\infty = (2k_1\pi + \phi_1^*, \ldots, 2k_{N-1} \pi + \phi_1^*, (2k_N+1)\pi + \phi_1^*)$,
\end{enumerate}
where $k_i \in \bbz, \ i =1,2,\ldots,N$, which finish the proof of the theorem.\newline 
\qed 

\vspace{0.5cm}
\section{Discrete nonidentical Kuramoto model}\label{sec:5}
\setcounter{equation}{0}
\vspace{0.5cm}
In this section, we will consider the discrete non-identical Kuramoto model \eqref{A-2}. Actually, from the analysis in \cite{H-K-K-Z}, if the initial data is contained in a quarter, then the synchronization of the oscillators in discrete nonidentical Kuramoto model will be guaranteed for sufficiently large coupling strength $K$. If the initial data is contained in a half circle, Lemma \ref{L2.3} shows the oscillators in continuous nonidentical Kuramoto model will concentrate into a quarter after finite time for sufficiently large coupling strength. Therefore, combining Lemma \ref{L2.5} and the analysis in \cite{H-K-K-Z}, we can conclude the emergence of synchronization in nonidentical case for sufficiently large coupling strength. Moreover, if the initial data $\Theta_0\in\mathcal{A}_1$, according to \cite{H-K-R}, both identical and nonidentical oscillators will concentrate into a small region after finite time, provided the coupling strength is sufficiently large. Therefore, we can again apply Lemma \ref{L2.5} and the analysis in \cite{H-K-K-Z} to obtain the emergence of synchronization.\newline 

However, for initial data $\Theta_0\in\mathcal{A}_2$, the oscillators may not move to the quarter, thus we cannot apply previous analysis to yield the emergence of synchronization. Fortunately, we have Theorem \ref{T3.1} based on the gradient flow structure of discrete model. Therefore, the strategy in \cite{H-K-R} can be extended to the discrete version. More precisely, in order to show the emergence of synchronization, we only need to show the uniformly boundedness of the nonidentical oscillators. In the following lemma, we will provide a sufficient condition for uniform bound of the first $N_0$ nonidentical oscillators.

\begin{lemma}\label{L5.1}
Let $N\geq 3$, suppose that the initial configuration $\Theta(0)$ and natural frequencies satisfy the following conditions,
\[\frac{1}{N} \sum_{j=1}^N \theta_j(0) = 0, \quad \frac{1}{N} \sum_{j=1}^N \Omega_j = 0,,\quad \theta_j(0) \in [-\pi, \pi), \quad 1 \le j \le N. \]
Moreover, we let $N_0$, $l$ and $K$ be positive constants which satisfy the following conditions,
\begin{equation}\label{BD-1}
\begin{aligned}
&N_0 \in \bbz^+ \cap \left(\frac{N}{2}, N\right], \qquad l \in \left(0, 2 \arccos \frac{N-N_0}{N_0}\right), \\
& \max_{1 \le j,k \le N_0} |\theta_j(0) - \theta_k(0)| < l, \qquad K > \frac{D(\Omega)}{\frac{N_0}{N} \sin l - \frac{2(N-N_0)}{N} \sin \frac{l}{2}}. 
\end{aligned}
\end{equation}
Then, for the solution $\Theta^h(n)$  to the discrete non-identical system \eqref{A-2}, there exists a positive constant $h_0 $ such that, for $0 < h < h_0$ we have
\begin{equation}\label{BD-2}
\max_{1 \le j,k \le N_0} |\theta_j^h(n) - \theta^h_k(n)| < l, \quad \text{for all } n \ge 0.
\end{equation}
\end{lemma}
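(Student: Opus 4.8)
The plan is to run the ``first exit step'' contradiction argument already used for Lemma~\ref{L4.2} and Lemma~\ref{L4.5}, extended in the spirit of the continuous strategy of \cite{H-K-R}, but now carrying along all the $O(h)$ one-step errors, since the non-identical frequencies destroy the order preservation that was available in the identical case. Write $D^h(n):=\max_{1\le i,j\le N_0}|\theta_i^h(n)-\theta_j^h(n)|$, suppose \eqref{BD-2} fails, and let $n_0\ge 0$ be the smallest index with $D^h(n_0+1)\ge l$, so $D^h(n)<l$ for $0\le n\le n_0$. Put $C:=\max_{1\le i\le N}|\Omega_i|+K$; from \eqref{A-2} one has the uniform displacement bound $|\theta_i^h(n+1)-\theta_i^h(n)|\le Ch$ for all $i,n$. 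If $D^h(n_0)<l/2$ then $D^h(n_0+1)\le D^h(n_0)+2Ch<l$ for $h$ small, a contradiction, so I may assume $D^h(n_0)\ge l/2$.

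Next, let $i^\star,j^\star\in\{1,\dots,N_0\}$ attain the maximum and minimum of $\{\theta_i^h(n_0+1)\}_{i\le N_0}$, so that $D^h(n_0+1)=\theta_{i^\star}^h(n_0+1)-\theta_{j^\star}^h(n_0+1)$. Using the displacement bound one checks $\theta_{i^\star}^h(n_0)\ge \max_{i\le N_0}\theta_i^h(n_0)-2Ch$ and $\theta_{j^\star}^h(n_0)\le\min_{i\le N_0}\theta_i^h(n_0)+2Ch$; in particular $d^\star:=\theta_{i^\star}^h(n_0)-\theta_{j^\star}^h(n_0)$ obeys $D^h(n_0)-4Ch\le d^\star\le D^h(n_0)<l$, and $d^\star>0$ for $h$ small. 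Subtracting the two iteration lines of \eqref{A-2},
\[
D^h(n_0+1)=d^\star+h(\Omega_{i^\star}-\Omega_{j^\star})+\frac{Kh}{N}\sum_{j=1}^N\big[\sin(\theta_j^h(n_0)-\theta_{i^\star}^h(n_0))-\sin(\theta_j^h(n_0)-\theta_{j^\star}^h(n_0))\big],
\]
and I would rewrite each summand via $\sin A-\sin B=2\sin\frac{A-B}{2}\cos\frac{A+B}{2}$, so the $j$-th term equals $-2\sin\frac{d^\star}{2}\cos\!\big(\theta_j^h(n_0)-\tfrac12(\theta_{i^\star}^h(n_0)+\theta_{j^\star}^h(n_0))\big)$, and split the sum into the bulk $j\le N_0$ and the rest $j>N_0$.

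For $j\le N_0$ the phase $\theta_j^h(n_0)$ lies between $\min_{i\le N_0}\theta_i^h(n_0)$ and $\max_{i\le N_0}\theta_i^h(n_0)$, whose midpoint is within $Ch$ of $\tfrac12(\theta_{i^\star}^h(n_0)+\theta_{j^\star}^h(n_0))$ by the displacement bound; hence the cosine argument has modulus $<\tfrac{l}{2}+Ch<\tfrac{\pi}{2}$ (note $l<\pi$ since $0\le\tfrac{N-N_0}{N_0}<1$), and each bulk term is $\le -2\sin\frac{d^\star}{2}\cos(\tfrac{l}{2}+Ch)$, so the bulk contribution is $\le -2N_0\sin\frac{d^\star}{2}\cos(\tfrac{l}{2}+Ch)$. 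For $j>N_0$ the cosine is only bounded by $1$ in modulus, giving a rest contribution $\le 2(N-N_0)\sin\frac{d^\star}{2}\le 2(N-N_0)\sin\frac{l}{2}$. Set $c_0:=K\big(\tfrac{N_0}{N}\sin l-\tfrac{2(N-N_0)}{N}\sin\tfrac{l}{2}\big)-D(\Omega)$, which is strictly positive by \eqref{BD-1} (the denominator there being positive exactly because $l<2\arccos\tfrac{N-N_0}{N_0}$). If $d^\star<l-\delta_1$ for a fixed $\delta_1$, then $D^h(n_0+1)\le d^\star+Ch<l$ once $h<\delta_1/C$, a contradiction; otherwise $d^\star\in[l-\delta_1,l)$, and since $2\sin\frac{l-\delta_1}{2}\cos(\tfrac{l}{2}+Ch)\to\sin l$ as $\delta_1,h\to 0$, I first fix $\delta_1$ and then $h_0$ so small that $2N_0\sin\frac{d^\star}{2}\cos(\tfrac{l}{2}+Ch)\ge N_0\sin l-\tfrac{Nc_0}{2K}$. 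Feeding these bounds, together with $d^\star<l$ and $\Omega_{i^\star}-\Omega_{j^\star}\le D(\Omega)$, into the displayed identity gives $D^h(n_0+1)<l-\tfrac{c_0}{2}h<l$, contradicting $D^h(n_0+1)\ge l$. Taking $h_0$ to be the minimum of the finitely many smallness thresholds (all functions of $N,N_0,l,K,D(\Omega)$ only) then proves \eqref{BD-2}.

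I expect the main obstacle to be the index bookkeeping: the pair $(i^\star,j^\star)$ realizing the diameter at step $n_0+1$ need not realize it at step $n_0$, so the exact sub-additivity estimate (Lemma~\ref{sub-additive}) that made the identical case clean is no longer directly available, and one must instead control, with explicit $O(h)$ error, how far $\theta_{i^\star}^h(n_0),\theta_{j^\star}^h(n_0)$ and their midpoint sit from the true extremes and center of the first $N_0$ oscillators — this is precisely the ``higher order iteration error'' flagged in the introduction. A secondary but genuine subtlety is the order of the quantifiers: the dangerous-zone width $\delta_1$ must be fixed using $c_0$ (hence $K,N,N_0,l,D(\Omega)$) alone, and only afterwards may $h_0$ be shrunk, so that the argument is not circular.
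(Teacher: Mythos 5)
Your proof is correct and follows essentially the same route as the paper's: a first-exit-step contradiction, the identity $\sin A-\sin B=2\sin\frac{A-B}{2}\cos\frac{A+B}{2}$, a split of the cosine sum into the first $N_0$ oscillators (whose arguments are $<\frac{l}{2}+O(h)<\frac{\pi}{2}$) and the remaining $N-N_0$ (bounded by $1$), and finally the positivity of $K\bigl(\tfrac{N_0}{N}\sin l-\tfrac{2(N-N_0)}{N}\sin\tfrac{l}{2}\bigr)-D(\Omega)$ to force the diameter below $l$ again. The differences are only bookkeeping (you track the extremizers at step $n_0+1$ and back them up with a uniform displacement bound plus a $\delta_1$ case split, whereas the paper tracks extremizers at both steps and expands the trigonometric terms explicitly; also a couple of your one-step displacement constants should read $2Ch$, i.e.\ $(D(\Omega)+2K)h$, rather than $Ch$), and none of this affects the validity of the argument.
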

\begin{proof}
let $\Theta^h(n) = (\theta^h_1(n), \theta^h_2(n), \ldots, \theta^h_N(n))$ be a solution to the discrete system \eqref{A-2} subject to the initial configuration satisfying the conditions in the statements of theorem. Then $l$ is obviously less than $\pi$, i.e. $0 < l < \pi$, owing to the assumption on $l$ in \eqref{BD-1}. Then, we will prove \eqref{BD-2} by contradiction. In fact, suppose \eqref{BD-2} does not hold, then there exists a step $n_*$ such that 
\begin{equation}\label{BD-3}
\left\{\begin{aligned}
&\max_{1 \le j,k \le N_0} |\theta_j^h(n) - \theta^h_k(n)| < l, \quad 0 \le n\le n_*,\\
&\max_{1 \le j,k \le N_0} |\theta^h_j(n_* + 1) - \theta^h_k(n_* + 1)| \ge l.
\end{aligned}
\right.
\end{equation}
Let $P$, $p$, $Q$ and $q$ be integers in $[1,N_0]$. Without loss of generality, we may set the $P$-th and $p$-th oscillators to be the maximum and minimum of the first $N_0$ oscillators at step $n_*$, respectively. Similarly, we let $Q$-th and $q$-th oscillators  be the maximum and minimum respectively of the first $N_0$ oscillators at step $n_*+1$, i.e.
\begin{equation}\label{E4}
\begin{aligned}
&\theta^h_P(n_*) = \max_{1 \le j \le N_0} \theta^h_j(n_*), \qquad \theta^h_p(n_*) = \min_{1 \le j\le N_0} \theta^h_j(n_*), \\
& \theta^h_Q(n_*+1) = \max_{1 \le j \le N_0} \theta^h_j(n_*+1), \qquad \theta^h_q(n_*+1) = \min_{1 \le j \le N_0} \theta^h_j(n_*+1).
\end{aligned}
\end{equation}
Then, the phase diameter of the first $N_0$ oscillators at $(n_*+1)$-th step can be expressed by the $n_*$-th step information due to the iteration scheme,
\begin{align}
\begin{aligned}\label{BD-4}
&\theta^h_Q(n_*+1) - \theta^h_q(n_*+1) \\
&= \theta^h_Q(n_*) - \theta^h_q(n_*)  + h(\Omega_p - \Omega_q)  + \frac{Kh}{N} \sum_{j=1}^N [\sin (\theta^h_j(n_*) - \theta^h_Q(n_*)) - \sin (\theta^h_j(n_*) -\theta^h_q(n_*))] \\
&\le \theta^h_Q(n_*) - \theta^h_q(n_*) + hD(\Omega) - \frac{2Kh}{N} \sin \frac{\theta^h_Q(n_*) - \theta^h_q(n_*)}{2} \sum_{j=1}^N \cos \left( \theta^h_j(n_*)-\frac{\theta^h_Q(n_*) + \theta^h_q(n_*)}{2}\right) 
\end{aligned}
\end{align}
Now, we will first show that the last term in \eqref{BD-4} is positive and then, we can apply the estimates on the trigonometric functions to obtain desire results.\newline

\noindent $\bullet$ Step 1. (Positivity) In the last term of \eqref{BD-4}, to deal with the summation of trigonometric functions, we may divide it into two parts as follows,
\begin{align}
\begin{aligned}\label{E6}
&\sum_{j=1}^N \cos \left( \theta^h_j(n_*)-\frac{\theta^h_Q(n_*) + \theta^h_q(n_*)}{2}\right) \\
&=  \sum_{j=N_0+1}^N \cos \left( \theta^h_j(n_*)-\frac{\theta^h_Q(n_*) + \theta^h_q(n_*)}{2}\right)+\sum_{j=1}^{N_0} \cos \left( \theta^h_j(n_*)-\frac{\theta^h_Q(n_*) + \theta^h_q(n_*)}{2}\right) .
\end{aligned}
\end{align}
For the first term, we apply the uniformly boundedness of the trigonometric functions to have the simple lower bound estimates as below,
\begin{equation}\label{E7}
\sum_{j=N_0+1}^N \cos \left( \theta^h_j(n_*)-\frac{\theta^h_Q(n_*) + \theta^h_q(n_*)}{2}\right) \ge -(N-N_0).
\end{equation}
For $1 \le j \le N_0$, according to \eqref{E4}, $\theta_P$ and $\theta_p$ are the maximum and minimum at $n_*$-th step. Therefore, we can apply \eqref{BD-3} to obtain the following estimate,
\begin{align}
\begin{aligned}\label{E8}
&\left| \theta^h_j(n_*)-\frac{\theta^h_P(n_*) + \theta^h_p(n_*)}{2}\right|\\
 &= \frac{1}{2} \left|-( \theta^h_P(n_*) - \theta^h_j(n_*)) + \theta^h_j(n_*) - \theta^h_p(n_*) \right| \\
 &\le \frac{1}{2} \max \left\{ | \theta^h_j(n_*) - \theta^h_P(n_*)|, |\theta^h_j(n_*) - \theta^h_p(n_*)|\right\} \le \frac{l}{2}.
\end{aligned}
\end{align}
Then, we apply \eqref{E8} and the simple triangle inequality to obtain the estimate of the last term in \eqref{E6} as follows,
\begin{align}
\begin{aligned}\label{BD-5}
&\left| \theta^h_j(n_*)-\frac{\theta^h_Q(n_*) + \theta^h_q(n_*)}{2}\right| \\
&\le \left| \theta^h_j(n_*)-\frac{\theta^h_P(n_*) + \theta^h_p(n_*)}{2}\right| + \left| \frac{\theta^h_P(n_*) + \theta^h_p(n_*)}{2} - \frac{\theta^h_Q(n_*) + \theta^h_q(n_*)}{2} \right| \\
&\le \frac{l}{2} + \frac{1}{2} \max \left\{ |\theta^h_P(n_*) - \theta^h_Q(n_*)| , |\theta^h_p(n_*) - \theta^h_q(n_*)|\right\}.
\end{aligned}
\end{align}
Next we will estimate $|\theta^h_P(n_*) - \theta^h_Q(n_*)|$ and $|\theta^h_p(n_*) - \theta^h_q(n_*)|,$ respectively. According to the definition \eqref{E4}, it is clear that 
\begin{equation}\label{BD-6}
\begin{aligned}
&\theta^h_Q(n_*) \le \theta^h_P(n_*), \quad \theta^h_q(n_*) \ge \theta^h_p(n_*), \\
&\theta^h_Q(n_*+1) \ge \theta^h_P(n_*+1), \quad \theta^h_q(n_*+1) \le \theta^h_p(n_*+1).
\end{aligned}
\end{equation}
Therefore, according to the iteration scheme \eqref{A-2} and the uniform bound of the trigonometric functions, the quantity $|\theta^h_Q(n_*+1) - \theta^h_P(n_*+1)|$ can be estimated as below,
\begin{align}
\begin{aligned}\label{E11}
&\theta^h_Q(n_*+1) - \theta^h_P(n_*+1) \\
&= (\theta^h_Q(n_*) - \theta^h_P(n_*)) + h(\Omega_Q - \Omega_P) + \frac{Kh}{N} \sum_{j=1}^N [\sin (\theta^h_j(n_*) - \theta^h_Q(n_*)) - \sin (\theta^h_j(n_*) - \theta^h_P(n_*))] \\
&\le - (\theta^h_P(n_*) - \theta^h_Q(n_*)) + hD(\Omega) + 2Kh.
\end{aligned}
\end{align}
Thus, we combine \eqref{BD-6} and \eqref{E11} to obtain the estimate of the quantity $|\theta^h_P(n_*) - \theta^h_Q(n_*)|$ as follows,
\begin{equation}\label{E12}
\begin{aligned}
|\theta^h_P(n_*) - \theta^h_Q(n_*)|&=  \theta^h_P(n_*) - \theta^h_Q(n_*)\\
&\le -(\theta^h_Q(n_*+1) - \theta^h_P(n_*+1)) + h(D(\Omega) + 2K)\leq h(D(\Omega) + 2K).
\end{aligned}
\end{equation}
Similarly, we can apply the same argument as above to obtain the estimate of the difference between the $p$-th and $q$-th oscillators, i.e. 
\begin{equation}\label{BD-8}
|\theta^h_q(n_*) - \theta^h_p(n_*)|  \le h(D(\Omega) + 2K).
\end{equation}


\noindent Since $0 < l < 2 \arccos \frac{N- N_0}{N_0} \le \pi$, thus for sufficiently small $h$, we combine \eqref{BD-5}, \eqref{E12} and \eqref{BD-8} to obtain for $1 \le j \le N_0$ that,
\begin{equation}\label{h-1}
\left| \theta^h_j(n_*)-\frac{\theta^h_Q(n_*) + \theta^h_q(n_*)}{2}\right| \le \frac{l}{2} + \frac{D(\Omega) +2K}{2} h < \frac{\pi}{2}.
\end{equation}
Thus for $1 \le j \le N_0,$ it is easy to see that
\begin{equation}\label{BD-9}
\cos \left(\theta^h_j(n_*)-\frac{\theta^h_Q(n_*) + \theta^h_q(n_*)}{2} \right)\ge \cos \left(\frac{l}{2} + \frac{D(\Omega) +2K}{2} h \right) > 0.
\end{equation}
Next, to estimate the sinusoidal part in \eqref{BD-4}, we have to study the difference between the $Q$-th and $q$-th oscillators at step $n_*$. According to $\eqref{BD-3}_1$, \eqref{E4} and \eqref{BD-4}, we can let $h$ be sufficiently small to obtain,
\begin{align}
\begin{aligned}\label{h-2}
\pi>l>\theta^h_Q(n_*) - \theta^h_q(n_*) \ge l - hD(\Omega) - 2Kh > 0,
\end{aligned}
\end{align}
where we choose $h < \frac{l}{D(\Omega) +2K}.$ Thus we combine \eqref{BD-19} and \eqref{h-2} to obtain the positivity of the last term in \eqref{BD-4},
\begin{equation*}
\sin \frac{\theta^h_Q(n_*) - \theta^h_q(n_*)}{2} \sum_{j=1}^N \cos \left( \theta^h_j(n_*)-\frac{\theta^h_Q(n_*) + \theta^h_q(n_*)}{2}\right)  > 0.\newline
\end{equation*}

\noindent $\bullet$ Step 2. (Uniform bound) In this step, we will continue to prove the uniform bound of the first $N_0$ oscillators. According to the property of trigonometric functions and the estimate \eqref{h-1}, we obtain for small $h > 0$ that,
\begin{align}
\begin{aligned}\label{BD-10}
&\cos \left(\frac{l}{2} + \frac{D(\Omega) +2K}{2} h \right) \\
&= \cos \frac{l}{2} \cos \left(\frac{D(\Omega) + 2K}{2}h \right) - \sin \frac{l}{2} \sin (\frac{D(\Omega) + 2K}{2}h ) \\
&\ge \cos \frac{l}{2} \left[ 1 - \frac{(D(\Omega) + 2K)^2}{8}h^2\right] - \sin \left(\frac{D(\Omega) + 2K}{2} h \right) \\
&\ge \cos \frac{l}{2} - \cos \frac{l}{2} \frac{(D(\Omega) + 2K)^2}{8} h^2 - \frac{D(\Omega) + 2K}{2}h \\
&\ge \cos \frac{l}{2} - \left[ \cos \frac{l}{2} \frac{(D(\Omega) + 2K)^2}{8} + \frac{D(\Omega) + 2K}{2}\right]h.
\end{aligned}
\end{align}
Therefore, for sufficiently small $h$, we combine \eqref{h-1}, \eqref{BD-10} and the property of trigonometric functions to obtain that
\begin{align}
\begin{aligned}\label{BD-11}
&\sum_{j=1}^N \cos \left( \theta^h_j(n_*)-\frac{\theta^h_Q(n_*) + \theta^h_q(n_*)}{2}\right) \\
&=\left(\sum_{j=1}^{N_0}+\sum_{j=N_0+1}^N\right) \cos \left( \theta^h_j(n_*)-\frac{\theta^h_Q(n_*) + \theta^h_q(n_*)}{2}\right) \\
&\ge N_0 \cos \frac{l}{2} - N_0 \left( \cos \frac{l}{2} \frac{(D(\Omega) + 2K)^2}{8} + \frac{D(\Omega) + 2K}{2}\right)h - (N- N_0).
\end{aligned}
\end{align}
According to \eqref{BD-1}, we have $N_0 \cos \frac{l}{2} - (N- N_0) > 0$. More over, for simplicity, we define the following notation,
\[A:= N_0 \left( \cos \frac{l}{2} \frac{(D(\Omega) + 2K)^2}{8} + \frac{D(\Omega) + 2K}{2}\right) >0.\]
Then, by choosing $h$ sufficiently small such that \eqref{BD-11} holds and, moreover, $h < \frac{N_0 \cos \frac{l}{2} - (N- N_0)}{A}$, we obtain
\begin{equation}\label{BD-12}
\sum_{j=1}^N \cos \left( \theta^h_j(n_*)-\frac{\theta^h_Q(n_*) + \theta^h_q(n_*)}{2}\right) \ge N_0 \cos \frac{l}{2} - (N- N_0) - Ah > 0.
\end{equation}
On the other hand, according to the iteration scheme \eqref{BD-4} and the positivity in \eqref{BD-9} and \eqref{h-2}, we obtain that
\[\theta^h_{Q}(n_* + 1) - \theta^h_q(n_* + 1)  \le \theta^h_{Q}(n_*) - \theta^h_q(n_*) + h D(\Omega).\]
Then, according to \eqref{BD-3} and the definition \eqref{E4}, we can choose $h$ sufficiently small such that $h < \frac{l}{D(\Omega)}$ to obtain that 
\begin{equation}\label{BD-13}
\left\{\begin{aligned}
&\pi>l>\theta^h_{Q}(n_*) - \theta^h_q(n_*) \ge l - hD(\Omega) > 0,\\
&\sin \frac{\theta^h_{Q}(n_*) - \theta^h_q(n_*)}{2} \ge \sin \frac{l - hD(\Omega)}{2} > 0.
\end{aligned}
\right.
\end{equation}
Then, we combine \eqref{BD-13} and the properties of trigonometric functions to obtain for sufficiently small $h$ that
\begin{align}
\begin{aligned}\label{E22}
\sin \frac{l - hD(\Omega)}{2}&= \sin \frac{l}{2} \cos \frac{hD(\Omega)}{2} - \cos \frac{l}{2} \sin \frac{hD(\Omega)}{2} \\
&\ge \sin \frac{l}{2} - \sin \frac{l}{2} \frac{(D(\Omega))^2}{8} h^2 - \frac{D(\Omega)}{2}h \ge \sin \frac{l}{2} - Bh,
\end{aligned}
\end{align}
where $B := \sin \frac{l}{2} \frac{(D(\Omega))^2}{8} + \frac{D(\Omega)}{2}  > 0$. Then based on the choices of small $h$, we may combine \eqref{BD-4}, \eqref{h-1}, \eqref{BD-12}, \eqref{BD-13} and \eqref{E22} to yield that
\begin{equation}\label{E23}
\begin{aligned}
&\theta^h_Q(n_*+1) - \theta^h_q(n_*+1)\\
 &\le \theta^h_Q(n_*) - \theta^h_q(n_*) + hD(\Omega) - \frac{2Kh}{N} \sin \frac{l - hD(\Omega)}{2} \left[N_0 \cos \frac{l}{2} - (N- N_0) - Ah\right]\\
 &\le \theta^h_Q(n_*) - \theta^h_q(n_*) + hD(\Omega) - \frac{2Kh}{N}  \left( \sin \frac{l}{2} - Bh\right) \left[N_0 \cos \frac{l}{2} - (N- N_0) - Ah\right] \\
&\le \theta^h_Q(n_*) - \theta^h_q(n_*) + hD(\Omega) - \frac{2Kh}{N} \sin \frac{l}{2} \left[N_0 \cos \frac{l}{2} - (N- N_0)\right] + Ch^2 + E h^2 \\
&\le \theta^h_Q(n_*) - \theta^h_q(n_*) + h [-F +( C + E)h],
\end{aligned}
\end{equation}
where for notational simplicity, we apply the notations $A$ in \eqref{BD-12} and $B$ in \eqref{E22} to further define $C$, $E$ and $F$ as below, 
\begin{align}
\left\{\begin{aligned}\label{E24}
&C := \frac{2KB}{N} \left[N_0 \cos \frac{l}{2} - (N- N_0)\right]  > 0, \quad E := \frac{2KA}{N} \sin \frac{l}{2} > 0,\\
&F :=  \frac{2K}{N} \sin \frac{l}{2} \left[N_0 \cos \frac{l}{2} - (N- N_0)\right] - D(\Omega)>0,
\end{aligned}
\right.
\end{align}
where $F>0$ is due to the assumption on $K$ in \eqref{BD-1}. Therefore, we combine the constraints on $h$ in \eqref{h-1}, \eqref{BD-12}, \eqref{BD-13} and further choose $h < \frac{F}{C+E}$ to set $h_0$ as below,
\[h_0 := \min \left\{\frac{\pi - l}{D(\Omega) +2K},\quad \frac{N_0 \cos \frac{l}{2} - (N- N_0)}{A},\quad \frac{l}{D(\Omega)},\quad \frac{F}{C+E}\right\}.\]
Then for the mesh size $0 < h < h_0$, we combine \eqref{BD-3}, \eqref{E23} and \eqref{E24}  to obtain that
\[l \le \theta^h_Q(n_*+1) - \theta^h_q(n_*+1) \le \theta^h_Q(n_*) - \theta^h_q(n_*) < l,\]
which is obviously a contradiction. Therefore, we complete the proof of of the lemma.

\end{proof}
In the next lemma, we will make the first $N_0$ oscillators as a reference and imply the uniform boundedness of all oscillators. Thus, the gradient flow structure guarantees the convergence of the solution to the static state.

\begin{lemma}\label{2pi-vital}
Let $N\geq 3$, suppose that the initial configuration $\Theta(0)$ and natural frequencies satisfy the following conditions,
\[\frac{1}{N} \sum_{j=1}^N \theta_j(0) = 0, \quad \frac{1}{N} \sum_{j=1}^N \Omega_j = 0,,\quad \theta_j(0) \in [-\pi, \pi), \quad 1 \le j \le N. \]
Moreover, we let $N_0$, $l$ and $K$ be positive constants which satisfy the following conditions,
\begin{equation}
\begin{aligned}
&N_0 \in \bbz^+ \cap \left(\frac{N}{2}, N\right], \qquad l \in \left(0, 2 \arccos \frac{N-N_0}{N_0}\right), \\
& \max_{1 \le j,k \le N_0} |\theta_j(0) - \theta_k(0)| < l, \qquad K > \frac{D(\Omega)}{\frac{N_0}{N} \sin l - \frac{2(N-N_0)}{N} \sin \frac{l}{2}}. 
\end{aligned}
\end{equation}
Then, for the solution $\Theta^h(n)$  to the discrete non-identical system \eqref{A-2}, there exist a positive constant $h_0 $ and a equilibrium state $\Theta^\infty$ such that, for $0 < h < h_0$ we have
\[\sup_{0 \le n < +\infty} D(\Theta^h(n)) \le 4\pi + 2l, \quad \lim_{n \to +\infty} ||\Theta^h(n) - \Theta^\infty||_\infty = 0.\]
\end{lemma}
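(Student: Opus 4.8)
The plan is to establish the statement in two stages: first the uniform bound $\sup_n D(\Theta^h(n))\le 4\pi+2l$, and then the convergence to an equilibrium as a direct consequence of the discrete gradient-flow theory of Theorem \ref{T3.1}.

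\emph{Stage 1 (uniform bound).} Lemma \ref{L5.1} already confines the first $N_0$ oscillators: writing $m_0(n):=\min_{1\le j\le N_0}\theta^h_j(n)$ and $M_0(n):=\max_{1\le j\le N_0}\theta^h_j(n)$, one has $M_0(n)-m_0(n)<l$ for all $n$, so these oscillators live in a window of width less than $l$ which, by the iteration scheme, drifts by at most $h(D(\Omega)+K)$ per step. I would then run a stopping-step argument for the \emph{full} diameter in exactly the style of the proof of Lemma \ref{L5.1}: assuming $D(\Theta^h(n))\le 4\pi+2l$ for $n\le n_*$ but $D(\Theta^h(n_*+1))>4\pi+2l$, let $\theta^h_Q,\theta^h_q$ be the maximal and minimal oscillators. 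Since the diameter moves by only $O(h)$ per step, $D(\Theta^h(n_*))$ is within $O(h)$ of $4\pi+2l$, and since the $N_0$-cluster has width $<l$, at least one of $\theta^h_Q(n_*)-M_0(n_*)$ and $m_0(n_*)-\theta^h_q(n_*)$ is about $2\pi$ or more; after subtracting the appropriate multiple of $2\pi$, the cluster then sits at a controlled angular position relative to that extremal oscillator, and the coupling hypothesis in \eqref{BD-1} (equivalently $N_0\cos\frac l2-(N-N_0)>0$ together with $K$ large relative to $D(\Omega)$) forces the aggregate sine-force of the $N_0$ clustered oscillators on it to be attractive and to dominate both the $\le N-N_0$ worth of remaining couplings and the frequency spread $D(\Omega)$. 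Hence $D(\Theta^h(n_*+1))\le D(\Theta^h(n_*))<4\pi+2l$, a contradiction; and since the initial data lies in $[-\pi,\pi)$ the bound trivially holds at $n=0$, so it holds for all $n$. Finally, summing \eqref{A-2} over $i$ and using $\sum_i\Omega_i=0$ and $\sum_{i,j}\sin(\theta^h_j-\theta^h_i)=0$ gives $\sum_{i=1}^N\theta^h_i(n)=\sum_{i=1}^N\theta_i^0=0$ for all $n$, whence $|\theta^h_i(n)|\le \frac1N\sum_j|\theta^h_i(n)-\theta^h_j(n)|\le D(\Theta^h(n))\le 4\pi+2l$; thus the whole orbit lies in the convex compact box $[-(4\pi+2l),\,4\pi+2l]^N$.

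\emph{Stage 2 (gradient structure and convergence).} Set
\[ f(\Theta):=-\sum_{i=1}^N\Omega_i\theta_i-\frac{K}{2N}\sum_{i,j=1}^N\cos(\theta_i-\theta_j). \]
A direct computation gives $-\partial_{\theta_i}f(\Theta)=\Omega_i+\frac{K}{N}\sum_{j=1}^N\sin(\theta_j-\theta_i)$, so the discrete non-identical Kuramoto iteration \eqref{A-2} is exactly the discrete gradient flow $\Theta^h(n+1)-\Theta^h(n)=-h\,\nabla_\Theta f(\Theta^h(n))$ with mesh size $h$. The function $f$ is real-analytic on the convex compact box $[-(4\pi+2l),\,4\pi+2l]^N$, and by Stage 1 the orbit $\Theta^h(n)$ remains in this box for every $n$. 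Therefore Theorem \ref{T3.1} applies and produces some $\Theta^\infty$ with $\nabla_\Theta f(\Theta^\infty)=0$ and $\lim_{n\to\infty}\Theta^h(n)=\Theta^\infty$, i.e. $\lim_{n\to\infty}\|\Theta^h(n)-\Theta^\infty\|_\infty=0$, and $\Theta^\infty$ is an equilibrium of \eqref{A-2}. One takes $h_0$ to be the minimum of the $h_0$ furnished by Lemma \ref{L5.1} (plus any extra smallness used in Stage 1) and the threshold $2/C$ required by Theorem \ref{T3.1}, where $C=\max_{i,j}\max_{\Theta}|\partial_{\theta_i}\partial_{\theta_j}f(\Theta)|$ is the ($\Theta$-independent, order-$K$) Hessian bound on the box.

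\emph{Main obstacle.} All the work is in Stage 1. The delicate point is the $2\pi$-periodicity of the coupling: since the initial data spans all of $[-\pi,\pi)$, an oscillator can start essentially antipodal to the synchronized $N_0$-cluster, i.e. near an unstable configuration where the cluster exerts almost no restoring force; the large-$K$ hypothesis is precisely what guarantees the remaining couplings and the frequency forcing nudge it off the antipode and into an $O(2\pi)$-wide basin around the cluster, but because it may fall to either side the total spread can approach $4\pi$, and one must additionally track the $O(h)$ discretization overshoot and the $O(h)$ drift of the window $[m_0(n),M_0(n)]$, as well as carefully identify which of $\theta^h_Q,\theta^h_q$ is the "bad" extremal oscillator in each case. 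Once the uniform bound is in hand, Stage 2 is immediate from Theorem \ref{T3.1}, modulo the (routine) verification of the gradient structure and the conservation of the mean phase.
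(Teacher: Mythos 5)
Your Stage 2 is exactly the paper's second step (conservation of the zero mean phase gives $|\theta^h_i(n)|\le D(\Theta^h(n))$, and Theorem \ref{T3.1} with the standard Kuramoto potential then yields convergence), and that part is correct. The genuine gap is in Stage 1: you run a stopping-step argument on the \emph{global diameter} at the threshold $4\pi+2l$, but at the first step where $D(\Theta^h(n))$ exceeds $4\pi+2l$ the extremal oscillator need not sit near the $N_0$-cluster modulo $2\pi$. For instance the maximum can be at distance $\approx 3\pi$ above the cluster while the minimum is $\approx \pi+l$ below it; then, modulo $2\pi$, the escaping oscillator is essentially antipodal to the cluster, its aggregate sine-coupling with the $N_0$ clustered oscillators is only $O(Kl)$ with ambiguous sign, and it is not dominated by anything: the hypothesis \eqref{BD-1} guarantees that the clustered force beats $\frac{K(N-N_0)}{N}$ plus $D(\Omega)$ only when the relative angle is of order $l$, not of order $\pi$. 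Your "main obstacle" paragraph acknowledges this antipodal configuration, but the claim that large $K$ "nudges it off the antipode" is not implied by \eqref{BD-1} and, more importantly, cannot be extracted from a stopping time placed on the diameter: at that stopping moment there is simply no contraction estimate available, so the contradiction does not close.

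The paper avoids this by running the stopping-time argument on a different quantity: the distance of a wandering oscillator to the $2\pi$-\emph{translate} of the cluster, with threshold $l$. Concretely, if some $\theta^h_{N_0+1}$ were unbounded relative to the cluster, then (since each step moves it only by $O(h)$) it must at some step land in the interval $\bigl(\max_{j\le N_0}\theta^h_j+2\pi-l,\ \min_{j\le N_0}\theta^h_j+2\pi+l\bigr)$ or its $-2\pi$ counterpart; by $2\pi$-periodicity of the coupling, the Lemma \ref{L5.1} argument applies verbatim to this shifted configuration and shows the oscillator can never leave that region (this is \eqref{BD-15}). Hence no oscillator can ever get farther than $2\pi+l$ from the cluster, which gives \eqref{E41} and only then the bound $\sup_n D(\Theta^h(n))\le 4\pi+2l$ as a \emph{consequence}, rather than as the quantity on which the contradiction is run. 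To repair your proposal you would need to replace the diameter stopping time by this trapping argument around the $2\pi$-translates (or something equivalent that keeps the escaping oscillator within $O(l)$ of the cluster mod $2\pi$ at the critical step); as written, Stage 1 does not go through.
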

\begin{proof}
We will prove the lemma in two steps. In the first step, we will show the uniform boundedness of the phase diameter $D(\Theta^h(n))$. While in the second step, we will apply the gradient flow structure and Theorem \ref{T3.1} to prove the convergence of the phases of oscillators.\newline
 
\noindent $\bullet$ Step A. (Uniform bound of relative distance) In this step, we will study the dynamics of the oscillators $\{\theta^h_1(n), \ldots, \theta^h_{N}(n)\}$ and prove by contradiction that the relative distance between any two oscillators is uniformly bounded. Suppose not, i.e. 
\[\limsup_{n\rightarrow+\infty}D(\Theta^h(n))=+\infty.\]
In Lemma \ref{L5.1}, we already prove the uniform bound of the first $N_0$ oscillators. therefore we may define
\[S_0(n) :=\{\theta^h_1(n), \ldots, \theta^h_{N_0}(n)\}, \quad S_{-1} := S_0 - 2\pi, \quad S_1 := S_0 + 2\pi,\]
Then, combining the zero mean phase property and the assumption $\limsup_{n\rightarrow+\infty}D(\Theta^h(n))=+\infty$, we immediately conclude that, at least one oscillator in the set $\{\theta^h_{N_0+1},\ldots, \theta^h_N\}$ is unbounded with respect to $S_0(n)$, say $\theta^h_{N_0 + 1}$. Then before it tends to $-\infty$ or $+\infty$, this oscillator will enter the neighborhood of one of the sets $S_{k}$, $k=-1,1$. Without loss of generality, we may assume $\theta^h_{N_0 + 1}$ gets into $S_1$ for some steps. In other words, we may assume that there exists a step $n_e$ such that
\[\max_{1 \le k \le N_0} |\theta^h_{N_0+1}(n_e) -  (\theta^h_k(n_e) + 2 \pi)| < l.\]
Then, we claim that the oscillator  $\theta^h_{N_0 + 1}$ will stay in the above region for all steps after $n_e$. More precisely, 
\begin{equation}\label{BD-15}
\max_{1 \le k \le N_0} |\theta^h_{N_0+1}(n) - (\theta^h_k(n) + 2\pi)| < l, \quad \text{for all } n \ge n_e.
\end{equation}
Suppose \eqref{BD-15} does not hold, then there exists a step $n_*$, satisfying $n_* \ge n_e$, such that
\begin{equation}\label{E27}
\left\{\begin{aligned}
&\max_{1 \le j \le N_0} |\theta^h_{N_0+1}(n) - (\theta_j^h(n) + 2\pi)| < l, \quad n_e \le n \le n_*,\\
&\max_{1 \le j \le N_0} |\theta^h_{N_0+1}(n_*+1) - (\theta^h_j(n_*+1) + 2\pi)| \ge l.
\end{aligned}
\right.
\end{equation}
According to \eqref{E27} and Lemma \ref{L5.1}, there exist two possibilities at $(n_*+1)$-th step. More precisely, $\theta^h_{N_0+1}(n_*+1)$ is either greater than $\left(\min_{1 \le j \le N_0} \theta^h_j(n_*+1) +2\pi+l \right)$, or less than $\left(\max_{1 \le j \le N_0} \theta^h_j(n_*+1) -2\pi -  l \right) $. As the two cases can be dealt with the same manner, in the following, we will only study the case below,
\begin{equation}\label{BD-16}
\theta^h_{N_0+1}(n_*+1) - \left(\min_{1 \le j \le N_0} \theta^h_j(n_*+1) +2\pi \right) \ge l.
\end{equation}
Let's assume the $q$-th oscillator to be the minimum in the first $N_0$ oscillators at step $n_*+1$. Then, we have the estimate of \eqref{BD-16} as below,
\begin{align}
\begin{aligned}\label{BD-18}
&\theta^h_{N_0+1}(n_*+1) - (\theta^h_q(n_*+1) + 2\pi) \\
&= \theta^h_{N_0+1}(n_*) - (\theta^h_q(n_*) + 2\pi) + h(\Omega_{N_0+1} - \Omega_q) \\
&+ \frac{Kh}{N} \sum_{j=1}^N [\sin (\theta^h_j(n_*) - \theta^h_{N_0+1}(n_*)) - \sin (\theta^h_j(n_*) - (\theta^h_q(n_*)+2\pi))] \\
&\le \theta^h_{N_0+1}(n_*) - (\theta^h_q(n_*) + 2\pi) + hD(\Omega) \\
&- \frac{2Kh}{N}  \left( \sin \frac{\theta^h_{N_0+1}(n_*) - (\theta^h_q(n_*)+2\pi)}{2} \right) \sum_{j=1}^N \cos \left( \theta^h_j(n_*) - \frac{\theta^h_{N_0+1}(n_*) + (\theta^h_q(n_*)+2\pi)}{2}\right),
\end{aligned}
\end{align}
Similar as in Lemma \ref{L5.1}, we will show the positivity of the last term. Actually, according to the iteration scheme \eqref{A-2} and \eqref{BD-16}, we have
\begin{align}
\begin{aligned}\label{E30}
&\theta^h_{N_0+1}(n_*) - (\theta^h_q(n_*) + 2\pi) \\
&= \theta^h_{N_0+1}(n_*+1) - (\theta^h_q(n_*+1) + 2\pi) - h(\Omega_{N_0+1} - \Omega_q) \\
& - \frac{Kh}{N} \sum_{j=1}^N [\sin (\theta^h_j(n_*) - \theta^h_{N_0+1}(n_*)) - \sin (\theta^h_j(n_*) - (\theta^h_q(n_*)+2\pi))] \\
&\ge l - hD(\Omega) - 2Kh.
\end{aligned}
\end{align}
Then, we combine \eqref{E27} and \eqref{E30} and choose a sufficiently $h$ such that $h < \frac{l}{D(\Omega) + 2K}$ to obtain the following estimates,
\begin{equation}\label{E31}
0 < \theta^h_{N_0+1}(n_*) - (\theta^h_q(n_*) + 2\pi) < l<\pi,\qquad \sin \frac{\theta^h_{N_0+1}(n_*) - (\theta^h_q(n_*)+2\pi)}{2} > 0.
\end{equation}
Next, we will study the cosine part in \eqref{BD-18}. Note $\theta^h_q(n_*)$ is not the minimum in the first $N_0$ oscillators at the step $n_*$. Therefore, we may assume the $p$-th oscillator to be minimum of the first $N_0$ oscillators, i.e.
\begin{equation}\label{E32}
\theta^h_p(n_*) + 2\pi = \min_{1 \le j \le N_0} \theta^h_j(n_*) + 2\pi.
\end{equation}



\noindent Then, we can split the cosine part in \eqref{BD-18} into two parts and apply the boundedness property of trigonometric functions to obtain that 
\begin{align}
\begin{aligned}\label{BD-19}
&\sum_{j=1}^N \cos \left( \theta^h_j(n_*) - \frac{\theta^h_{N_0+1}(n_*) +(\theta^h_q(n_*)+2\pi)}{2}\right) \\
&= \left(\sum_{j=1}^{N_0}+\sum_{j=N_0+1}^N \right)\cos \left( (\theta^h_j(n_*) +2\pi)- \frac{\theta^h_{N_0+1}(n_*) + (\theta^h_q(n_*)+2\pi)}{2}\right) \\
&\geq  \sum_{j=1}^{N_0} \cos \left( \theta^h_j(n_*)+2\pi - \frac{\theta^h_{N_0+1}(n_*) + (\theta^h_q(n_*)+2\pi)}{2}\right)-(N-N_0).
\end{aligned}
\end{align}
Then the summation in \eqref{BD-19} can be estimated term by term. In fact, for $j$-th oscillator where $1\le j \le N_0$, we have  
\begin{align}
\begin{aligned}\label{BD-20}
&\left| (\theta^h_j(n_*) +2\pi)- \frac{\theta^h_{N_0+1}(n_*) + (\theta^h_q(n_*)+2\pi)}{2}\right| \\
&\leq \left| (\theta^h_j(n_*)+2\pi) - \frac{(\theta^h_{p}(n_*) +l +2\pi)+ (\theta^h_p(n_*)+2\pi)}{2} \right|\\
& + \left|\frac{(\theta^h_{p}(n_*)+l +2\pi)+ (\theta^h_p(n_*)+2\pi)}{2} - \frac{\theta^h_{N_0+1}(n_*) + (\theta^h_q(n_*)+2\pi)}{2}\right| \\
&=\mathcal{I}_1+\mathcal{I}_2.
\end{aligned}
\end{align}
For $\mathcal{I}_1$, as the diameter of the first $N_0$ oscillators are uniformly bounded by $l$, we apply $\eqref{E27}_1$, \eqref{E31} and \eqref{E32} to obtain that 
\begin{align}
\begin{aligned}\label{E35}
&\left| (\theta^h_j(n_*)+2\pi) - \frac{(\theta^h_{p}(n_*) +l +2\pi)+ (\theta^h_p(n_*)+2\pi)}{2}\right| \\
&= \frac{1}{2} \left| (\theta^h_j(n_*)+2\pi) - (\theta^h_{p}(n_*) +l+2\pi) + (\theta^h_j(n_*)+2\pi) - (\theta^h_p(n_*)+2\pi)\right| \\
&\le \frac{1}{2} \max \left\{ |(\theta^h_j(n_*)+2\pi) - (\theta^h_{p}(n_*) +l+2\pi)|, |(\theta^h_j(n_*)+2\pi) - (\theta^h_p(n_*)+2\pi)| \right\} \\
&\le \frac{l}{2}.
\end{aligned}
\end{align}
For $\mathcal{I}_2$, we first deal with the difference between $\theta^h_p(n_*)$ and $\theta^h_q(n_*)$. According to the definition of $p$-th and $q$-th oscillators in \eqref{BD-16} and \eqref{E32}, we have 
\begin{equation}\label{E36}
(\theta^h_p(n_*) + 2\pi) - (\theta^h_q(n_*) + 2\pi) \le 0, \quad (\theta^h_p(n_*+1) + 2\pi) - (\theta^h_q(n_*+1) + 2\pi) \ge 0.
\end{equation}
Therefore, according to the iteration scheme \eqref{A-2} and \eqref{E36}, we have the following estimates for the difference,
\begin{align}
\begin{aligned}\label{BD-22}
&(\theta^h_q(n_*) + 2\pi) - (\theta^h_p(n_*) + 2\pi) \\
&= -\left[ (\theta^h_p(n_*+1) +2\pi) - (\theta^h_q(n_*+1) + 2\pi)\right] + h(\Omega_p -\Omega_q) \\
& + \frac{Kh}{N} \sum_{j=1}^N [\sin (\theta^h_j(n_*) - \theta^h_p(n_*)) - \sin (\theta^h_j(n_*) - \theta^h_q(n_*))] \\
&\le hD(\Omega) + 2Kh.
\end{aligned}
\end{align}
Then we will deal with the difference between $\theta^h_{p}(n_*)+l +2\pi$ and $\theta^h_{N_0+1}(n_*) $ in $\mathcal{I}_2$. Similar as the estimate in \eqref{BD-22}, we can estimate the difference between $p$-th and $q$-th oscillator at step $n_*+1$ as below,
\begin{align}
\begin{aligned}\label{E38}
&(\theta^h_p(n_*+1) +2\pi) - (\theta^h_q(n_*+1) + 2\pi) \\
&= - \left[ (\theta^h_q(n_*) + 2\pi) - (\theta^h_p(n_*) + 2\pi)\right] + h(\Omega_p -\Omega_q) \\
& + \frac{Kh}{N} \sum_{j=1}^N [\sin (\theta^h_j(n_*) - \theta^h_p(n_*)) - \sin (\theta^h_j(n_*) - \theta^h_q(n_*))]\\
&\leq hD(\Omega) + 2Kh.
\end{aligned}
\end{align}
Then, we directly apply the iteration scheme \eqref{A-2}, \eqref{BD-16} and \eqref{E38} to obtain the estimate of the difference between $\theta^h_{p}(n_*)+l +2\pi$ and $\theta^h_{N_0+1}(n_*) $ as below,
\begin{align}
\begin{aligned}\label{BD-21}
&(\theta^h_p(n_*)+l + 2\pi) - \theta^h_{N_0+1}(n_*) \\
&= \left[  (\theta^h_q(n_*+1) +l+2\pi)-\theta^h_{N_0+1}(n_*+1)\right]+[\theta^h_p(n_*+1)-\theta^h_q(n_*+1)] \\
&+ h(\Omega_{N_0+1} - \Omega_p)  + \frac{Kh}{N} \sum_{j=1}^N [\sin (\theta^h_j(n_*) -\theta^h_{N_0+1}(n_*)) - \sin (\theta^h_j(n_*) - \theta^h_p(n_*))] \\
&\le 2hD(\Omega) + 4Kh.
\end{aligned}
\end{align}
Therefore, by choosing $h$ sufficiently small, we combine the estimates \eqref{BD-20}, \eqref{E35}, \eqref{BD-22}, and \eqref{BD-21} to obtain for $1 \le j \le N_0$ that
\begin{equation}\label{E40}
\left| (\theta^h_j(n_*) +2\pi)- \frac{\theta^h_{N_0+1}(n_*) + (\theta^h_q(n_*)+2\pi)}{2}\right| \le \frac{l}{2} + \frac{3D(\Omega) + 6K}{2}h < \frac{\pi}{2},
\end{equation}
Combining \eqref{E31} and \eqref{E40}, we obtain that the last term in \eqref{BD-18} is positive. Therefore, we can apply the same argument in the second step of the proof of Lemma \ref{L5.1} to show that there exists a positive constant $h_0$ such that 
\[\theta^h_{N_0+1}(n_*+1) - (\theta^h_q(n_*+1) + 2\pi)<l,\qquad 0<h<h_0,\]
which is obviously a contradiction to \eqref{BD-16}. Thus the proof of claim \eqref{BD-15} is completed. Moreover, the case when oscillator $\theta^h_{N_0+1}$ enter the set $S_{-1}$ at step $n_e$ can be dealt with the same method. Therefore, we combine Lemma \ref{L4.1}, \eqref{BD-15} to obtain that 
\begin{equation}\label{E41}
\max_{1\leq j\leq N_0}\theta^h_j(n) - 2\pi-l<\theta^h_{N_0+1}(n) < \min_{1\leq j\leq N_0}\theta^h_j(n) + 2\pi+l,\quad n\geq n_e.
\end{equation}
\eqref{E41} immediately implies the uniform bound of the diameter $D(\Theta^h(n))$, which is a contradiction to the assumption $\limsup_{n\rightarrow+\infty}D(\Theta^h(n))=+\infty$. Therefore, we conclude that the diameter $D(\Theta^h(n))$ is uniformly bounded. Moreover, according to \eqref{E41}, we have  
\begin{equation}\label{BD-23}
\sup_{0 \le n < +\infty} D(\Theta^h(n)) \le 4\pi + 2l.
\end{equation}
\vspace{0.5cm}

\noindent $\bullet$ Step 2. (Asymptotical synchronization)~According to the conservation of the total phase, we have the zero total phase for any solution to \eqref{A-2} with initial data stated in the lemma. This directly impies that
\[|\theta^h_k(n)| \le D(\Theta^h(n)) \le \sup_{0 \le n \le +\infty} D(\Theta^h(n))<4 \pi+2l.\]
Then we exploit Theorem \ref{T3.1} to obtain that for sufficiently small $0 < h < h_0$, there exists $\Theta^\infty$ such that
\[\lim_{n \to +\infty} || \Theta^h(n) - \Theta^\infty||_\infty = 0.\]
\end{proof}
Now, we are ready to prove the main theorem in the present section. For any given initial data, we only need to check if the conditions in Lemma \ref{2pi-vital} hold at some step. As in \cite{H-K-R}, the authors already verified this in the continuous model, we can simply use the approximation between continuous model and discrete model to obtain the desired results. 

\begin{theorem}\label{final}
Let $N\geq 3$, suppose that the initial configuration $\Theta_0$ and natural frequencies $\Omega_i$ satisfy the conditions \eqref{multiple}
and
\[r_0 >0, \quad \theta_j^0 \ne \theta_k^0, \ 1 \le j \ne k \le N, \quad ||\Omega||_\infty = \max_{1 \le j \le N } |\Omega_j| < \infty.\]
Then, there exists a large coupling strength $K_\infty > 0$ and a small mesh size $h_0 >  0$ such that,  if $K> K_\infty$ and $0 < h < h_0$, then the emergence of phase-locked state will asymptotically occurs. More precisely, we can find a phase locked state $\Theta^\infty$ such that the solution to system \eqref{A-2} with initial data $\Theta_0$ satisfies
\[\lim_{n \to \infty} ||\Theta^h(n) - \Theta^\infty||_\infty = 0,\] 
provided that $K> K_\infty$ and $0 < h < h_0$. 
\end{theorem}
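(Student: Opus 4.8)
\noindent\textbf{Proof strategy for Theorem \ref{final}.}
The plan is to deduce the theorem from Lemma \ref{2pi-vital}, feeding that lemma — through the finite-time Euler approximation of Lemma \ref{L2.5} — the information that the continuous Kuramoto flow already supplies via \cite{H-K-R}. Since the construction of \cite{H-K-R} recorded in Remark \ref{R2.2}(2) applies to \emph{every} configuration satisfying \eqref{multiple}, there is no need to split the $\mathcal{A}_1$ and $\mathcal{A}_2$ cases here. \emph{Step 1 (fixing $N_0$, $l$, $K_\infty$).} By Lemma \ref{L2.4} and Remark \ref{R2.2}(2) there is $K_\infty^{0}>0$ such that, for $K\ge K_\infty^{0}$, the solution $\Theta(t)$ of \eqref{A-1} with the given data admits $N_0\in\bbz^+\cap(\tfrac N2,N]$, $l\in(0,2\arccos\tfrac{N-N_0}{N_0})$ and a time $T_*>0$ with $\max_{1\le j,k\le N_0}|\theta_j(T_*)-\theta_k(T_*)|<l$. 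Using $\sin l=2\sin\tfrac l2\cos\tfrac l2$, the constraint on $l$ is equivalent to $\tfrac{N_0}{N}\sin l-\tfrac{2(N-N_0)}{N}\sin\tfrac l2=\tfrac{2\sin(l/2)}{N}\bigl(N_0\cos\tfrac l2-(N-N_0)\bigr)>0$, so $\tfrac{D(\Omega)}{\tfrac{N_0}{N}\sin l-\tfrac{2(N-N_0)}{N}\sin\tfrac l2}$ is a finite positive number; I would then take $K_\infty:=\max\bigl\{K_\infty^{0},\ \tfrac{D(\Omega)}{\tfrac{N_0}{N}\sin l-\tfrac{2(N-N_0)}{N}\sin\tfrac l2}\bigr\}$ (enlarging it further if the \cite{H-K-R} parameters vary with $K$). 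For $K>K_\infty$ the standing hypotheses on $N_0$, $l$, $K$ in Lemma \ref{2pi-vital} then hold.

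\emph{Step 2 (transporting the estimate to the discrete model).} The vector field of \eqref{A-1} is smooth, hence Lipschitz on any bounded set, and $\Theta(t)$ stays in a bounded region on $[0,T_*]$. A standard bootstrap shows that for $h$ small the iterates $\Theta^h(n)$, $0\le n\le[T_*/h]$, remain in a fixed neighbourhood of that region, so hypothesis (2) of Lemma \ref{L2.5} is satisfied and the global error obeys $\|\Theta(nh)-\Theta^h(n)\|_\infty\le Me^{L_MT_*}h$ there. Writing $\delta:=l-\max_{1\le j,k\le N_0}|\theta_j(T_*)-\theta_k(T_*)|>0$ and $n_*:=[T_*/h]$, continuity of the continuous flow gives $\max_{1\le j,k\le N_0}|\theta_j(n_*h)-\theta_k(n_*h)|<l-\tfrac\delta2$ for $h$ small, and adding the Euler error (which is $<\tfrac\delta4$ for $h$ small) yields $\max_{1\le j,k\le N_0}|\theta_j^h(n_*)-\theta_k^h(n_*)|<l$, i.e.\ the diameter condition of Lemma \ref{2pi-vital} at step $n_*$.

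\emph{Step 3 (resetting time and invoking Lemma \ref{2pi-vital}).} Summing \eqref{A-2} over $i$ and using the antisymmetry of the coupling together with $\sum_i\Omega_i=0$ shows $\tfrac1N\sum_i\theta_i^h(n)\equiv0$; in particular $\Theta^h(n_*)$ again has zero mean phase. Taking $\Theta^h(n_*)$ as initial datum — since \eqref{A-2} is $2\pi$-periodic in each coordinate and the conclusion of Lemma \ref{2pi-vital} is invariant under such shifts, the normalisation $\theta_j\in[-\pi,\pi)$ is not restrictive — all hypotheses of Lemma \ref{2pi-vital} are in force. Choosing $h_0$ as the minimum of the threshold supplied by Lemma \ref{2pi-vital} and the finitely many smallness conditions on $h$ used in Step 2, I would conclude that for $0<h<h_0$ there is a state $\Theta^\infty$ with $\sup_{n\ge n_*}D(\Theta^h(n))\le 4\pi+2l$ and $\lim_{n\to\infty}\|\Theta^h(n)-\Theta^\infty\|_\infty=0$; since only finitely many steps precede $n_*$, the same limit holds along the original trajectory, which is exactly the assertion.

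\emph{The main obstacle.} Most of the argument is bookkeeping, but two points need care. First, verifying hypothesis (2) of Lemma \ref{L2.5}, namely that the Euler iterates do not escape the region where the vector field is Lipschitz before time $T_*$; this is precisely why $h$ must be chosen small in a way tied to $T_*$ and $L_M$, and it is where the short bootstrap is genuinely required. Second, the uniformity of constants: one must ensure a single $K_\infty$ serves both the \cite{H-K-R} construction and the coupling-strength hypothesis of Lemma \ref{2pi-vital}, and that one $h_0$ simultaneously validates the approximation step and the lemma. Beyond these, the result is a direct consequence of Lemma \ref{L2.5}, Remark \ref{R2.2}(2), and Lemma \ref{2pi-vital} (the last of which itself rests on the discrete gradient-flow Theorem \ref{T3.1}).
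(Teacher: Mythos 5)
Your proposal is correct and follows essentially the same route as the paper: invoke the continuous-model result of \cite{H-K-R} (Lemma \ref{L2.4}/Remark \ref{R2.2}(2)) to get the $N_0$, $l$, $K_\infty$ and a finite time at which the first $N_0$ continuous oscillators are $l$-concentrated, transfer this to the discrete iterates at a nearby step via the Euler approximation of Lemma \ref{L2.5}, and then conclude with Lemma \ref{2pi-vital}. Your extra attention to the technical points (the boundedness needed for hypothesis (2) of Lemma \ref{L2.5}, the preserved zero mean phase at the restart step, and the $[-\pi,\pi)$ normalisation) only refines the same argument, which the paper states more briefly.
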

\begin{proof}
According to \cite{H-K-R}, for continuous model and the same initial condition, we can find a sufficient large $K_\infty$. If the coupling strength $K\geq K_\infty$, then there exists a time $T_\varepsilon$ such that the conditions in Lemma \ref{2pi-vital} hold at $T_\varepsilon$.
\begin{equation}\label{E43}
\begin{aligned}
&N_0 =N-1, \qquad l \in \left(0, 2 \arccos \frac{N-N_0}{N_0}\right), \\
& \max_{1 \le j,k \le N_0} |\theta_j(T_\varepsilon) - \theta_k(T_\varepsilon)| < l, \qquad K > \frac{D(\Omega)}{\frac{N_0}{N} \sin l - \frac{2(N-N_0)}{N} \sin \frac{l}{2}}. 
\end{aligned}
\end{equation}
Then, we apply the continuity of the solution $\Theta(t)$ and the approximation property in Lemma \ref{L2.5} to conclude for sufficiently small $h$ that, there exists a step $n_{\varepsilon}$ such that
\begin{equation}\label{E44}
\begin{aligned}
&N_0 =N-1, \qquad l \in \left(0, 2 \arccos \frac{N-N_0}{N_0}\right),\qquad K > \frac{D(\Omega)}{\frac{N_0}{N} \sin l - \frac{2(N-N_0)}{N} \sin \frac{l}{2}}, \\
& \max_{1 \le j,k \le N_0} |\theta^h_j(n_\varepsilon) - \theta^h_k(n_\varepsilon)| < l,\qquad n_\varepsilon h\leq T_\varepsilon\leq (n_\varepsilon +1)h. 
\end{aligned}
\end{equation}
Now, we combine Lemma \ref{2pi-vital} and \eqref{E44} to finish the proof of the main theorem. 

\end{proof}

\section{Summary}\label{sec:6}
\setcounter{equation}{0}
\vspace{0.5cm}
In this paper, we first provided a discrete version of the gradient flow theory, and accordingly prove the emergence of synchronization of the discrete-time Kuramoto model in both identical and non-identical cases. Then, in order to yield the convergence rate, we apply the approximation between continuous model and discrete model to obtain the exponential decay rate for discrete identical Kuramoto model. Moreover, according to the definition \eqref{D1}, for initial data $\Theta^h_0\in\mathcal{A}_1$, we proved that the time asymptotical equilibrium states of discrete and continuous models coincide with each other. While for initial data $\Theta^h_0\in\mathcal{A}_2$, we cannot prove this, which may be due to the instability of the bipolar states. Finally,  for non-identical model, we apply the theory of discrete gradient flow to yield the emergence of synchronization. However, we can only obtain the exponential decay for initial data in $\mathcal{A}_1$ so far. Therefore, we can apply the results in \cite{H-K-K-Z} to conclude the uniform-in-time convergence from discrete Kuramoto model to continuous Kuramoto model for $\Theta^h_0\in\mathcal{A}_1$. The case $\Theta^h_0\in\mathcal{A}_2$ will be studied in our future work.\newline

\end{document}